\numberwithin{equation}{section} %% double numbering within sections
\newtheorem{theorem}{Theorem}[section]
\newtheorem{lemma}{Lemma}[section]
\newtheorem{corollary}{Corollary}[section]
\newtheorem{proposition}{Proposition}[section]
\theoremstyle{definition} %% or \theoremstyle{remark} will produce roman text
\newtheorem{remark}{Remark}[section]
\def\Var{\mathop{\rm Var}\nolimits}%   a math operator.
\newcommand{\mb}[1]{\mathbf{#1}} %% use {\mb{symb}} provides mathbold roman
\newcommand{\Cb}{{\mb{C}}}
\newcommand{\Xb}{{\mb{X}}}
\def\A{{\mb{C}}}
\def\C{{\mb{C}}}
\def\pb{{\mb{p}}}
\newcommand{\mbs}[1]{\boldsymbol{#1}} %% use {\mbs{symb}} provides mathbold italic, better for greek symbols
\newcommand{\teb}{{\mbs{\theta}}}
\newcommand{\mc}[1]{\mathcal{#1}} %% use \mc{symb}
\newcommand{\Rc}{{\mc{R}}}
\newcommand{\Rca}{{\overline{\mc{R}}}}
\newcommand{\Nc}{{\mc{N}}}
\newcommand{\Fc}{{\mc{F}}}
\newcommand{\Ac}{{\mc{A}}}
\newcommand{\Bc}{{\mc{B}}}
\newcommand{\Lc}{{\mc{L}}}
\newcommand{\Pc}{{\mc{P}}}
\newcommand{\wtT}{\widetilde{T}}
\newcommand{\wtX}{\widetilde{X}}
\newcommand{\wtS}{\widetilde{S}}
 \newcommand{\PFA}{\mathsf{PFA}}% prob of false alarm
 \newcommand{\pfa}{\mathsf{PFA}}% prob of false alarm
\newcommand{\Pb}{{\mathsf{P}}} %probability
\newcommand{\EV}{{\mathsf{E}}} % expectation
\newcommand{\Eb}{{\mathsf{E}}}%expectation
\newcommand{\Hyp}{{\mathsf{H}}} %  hypothesis
\newcommand{\mrm}[1]{\mathrm{#1}}
\newcommand{\drm}{{\mrm{d}}}
\newcommand{\mbb}[1]{\mathbb{#1}} %use \mbb{symb}
\def\One{\mathchoice{\rm 1\mskip-4.2mu l}{\rm 1\mskip-4.2mu l}
{\rm 1\mskip-4.6mu l}{\rm 1\mskip-5.2mu l}}
\newcommand\Ind[1]{{\One_{\{#1\}}}} % indicator -- use \Ind{A}
\newcommand{\Zbb}{\mbb{Z}} %% discrete real line
\newcommand{\class}{{\mbb{C}}(\alpha)}
\def\bbr{{\mathbb R}}
\newcommand{\xra}{\xrightarrow} %% use \xla{\text{prob}} matches the length of arrow
\newcommand{\abs}[1]{\left\vert#1\right\vert}
\newcommand{\set}[1]{\left\{#1\right\}}
\newcommand{\brc}[1]{\left(#1\right)}
\newcommand{\brcs}[1]{\left[#1\right]}
\renewcommand{\le}{\leqslant} % AMS le ge
\renewcommand{\ge}{\geqslant}
\newcommand{\ignore}[1]{} %%% {} empty inside
\begin{document}

\begin{frontmatter}
%%%%
\title{ASYMPTOTICALLY OPTIMAL QUICKEST CHANGE DETECTION IN MULTISTREAM DATA---PART 1: GENERAL STOCHASTIC MODELS} 
\runtitle{Quickest Change Detection in Multistream Data---Part 1}

\begin{aug}
\author{\large \fnms{Alexander G.} \snm{Tartakovsky}\ead[label=e1]{agt@phystech.edu}\thanksref{t1}}
\thankstext{t1}{A.G. Tartakovsky is a Head of the Space informatics Laboratory at the Moscow Institute of Physics and Technology, Russia
 and Vice President of AGT StatConsult, Los Angeles, California, USA. The work was supported in part by the Russian Federation Ministry of Science and Education Arctic program and the grant 18-19-00452
from the Russian Science Foundation at the Moscow Institute of Physics and Technology.}
\address{Moscow Institute of Physics and Technology\\
Laboratory of Space Informatics\\
Moscow, Russia\\
and\\
AGT StatConsult \\
Los Angeles, CA, USA\\
%\printead{e1}}
}

\affiliation{Moscow Institute of Physics and Technology and AGT StatConsult}

\runauthor{A.G. Tartakovsky}

\end{aug}

%%%
\begin{abstract}
Assume that there are multiple data streams (channels, sensors) and in each stream the process of interest produces generally dependent and non-identically distributed observations. 
When the process is in a normal mode (in-control), the (pre-change) distribution is known, but when the process becomes abnormal there is a parametric uncertainty, i.e., the post-change 
(out-of-control) distribution is known only partially up to a parameter.  Both the change point and the post-change parameter are unknown. Moreover, the change affects an unknown subset of streams, so that the number of affected streams and their location are unknown in advance. A good changepoint detection procedure should detect the change as soon as possible
after its occurrence while controlling for a risk of false alarms.   We consider a Bayesian setup with a given prior distribution of the change point and propose two sequential mixture-based
change detection rules, one mixes a Shiryaev-type statistic over both the unknown subset of affected streams and the unknown post-change parameter and another mixes a 
Shiryaev--Roberts-type statistic.  These rules generalize the mixture detection procedures studied by Tartakovsky (2018) in a single-stream case.  We provide sufficient conditions under which the proposed multistream change detection procedures are first-order asymptotically optimal with respect to moments of the delay to detection as the probability of false alarm approaches zero.
\end{abstract}

\begin{keyword}[class=AMS]
\kwd[Primary ]{62L10; 62L15}
\kwd[; secondary ]{60G40; 60J05; 60J20.}
\end{keyword}

%%%
\begin{keyword}
\kwd{Asymptotic Optimality}
\kwd{Changepoint Detection} 
\kwd{General Non-i.i.d.\  Models} 
\kwd{Hidden Markov Models} 
\kwd{Moments of the Delay to Detection}
\kwd{$r$-Complete Convergence} 
\kwd{Statistical Process Control}
Surveillance.
\end{keyword}

\end{frontmatter}

%%%%%%%%%%%%%%%%%%%%%%%%%%%%%%%%%%%%%%%%%%%%%%%%%%%%%%%%%%%%%%%%%

%\begin{keyword}[class=AMS]
%\kwd[Primary ]{60F99; 62L10; 62L15}
%\kwd[; secondary ]{60J05; 60G40}
%\end{keyword}

%%%%%%%%%%%%%%%%%%%%%%%%%%%%%%%%%%%%%%%%%%%%%%
%
\section{Introduction} \label{sec:intro}

In most surveillance applications with unknown points of change, including the classical Statistical Process Control sphere,
the baseline (pre-change, in-control) distribution of observed data is known, but the post-change out-of-control distribution is not completely known. There are 
three conventional approaches in this case: (i)  to select a representative value of the post-change parameter and apply efficient detection procedures tuned to this value such as the 
Shiryaev procedure, the Shiryaev--Roberts procedure or CUSUM, (ii) to select a mixing measure over the parameter space and apply mixture-type procedures, 
(iii) to estimate the parameter and apply adaptive schemes. In the present article, we consider a more general case where the change occurs in multiple data streams and more 
general multi-stream double-mixture-type change detection procedures, assuming that the number and location of affected data streams are also unknown.

To be more specific, suppose there are $N$ data streams $\{X_i(n)\}_{n \ge 1}$, $i=1,\dots,N$, observed sequentially in time subject to a change at an unknown time $\nu\in\{0,1, 2, \dots\}$, 
so that $X_i(1),\dots,X_i(\nu)$ are generated by one stochastic model and $X_i(\nu+1),  X_i(\nu+2), \dots$ by another model when the change occurs in the $i$th stream. 
The change in distributions happens at a subset of streams 
$\Bc\in \{1,\dots,N\}$ of a size (cardinality) $1\le |\Bc| \le K \le N$, where $K$ is an assumed maximal number of streams that can be affected, which can be and often is substantially smaller than $N$. 
A sequential detection rule is a stopping time $T$ with respect to an observed sequence 
$\{\Xb(n)\}_{n\ge 1}$, $\Xb(n)=(X_1(n),\dots,X_N(n))$, i.e., $T$ is an integer-valued random variable, such that the event $\{T = n\}$
belongs to the sigma-algebra  $\Fc_{n}=\sigma(\Xb^n)$ generated by observations $\Xb(1),\dots,\Xb(n)$.  
A false alarm is raised when the detection is declared before the change occurs. 
We want to detect the change with as small a delay as possible while controlling for a risk of false alarms. 

To begin, assume for the sake of simplicity that the observations are independent across data streams, 
but have a fairly general stochastic structure within streams. So if we let $\Xb_i^{n}=(X_i(1),\dots,X_i(n))$ denote the sample of size $n$ in the $i$th stream and if
$\{f_{\theta_i,n}(X_i(n)|\Xb_i^{n-1})\}_{n\ge 1}$, $\theta_i\in\Theta_i$ is a parametric family of conditional densities of $X_i(n)$ 
given $\Xb_i^{n-1}$, then when $\nu=\infty$ (there is no change) the parameter $\theta_i$ is equal to the known value $\theta_{0,i}$, i.e., 
$f_{\theta_i,n}(X_i(n)|\Xb_i^{n-1})=f_{\theta_{i,0},n}(X_i({n}|\Xb_i^{n-1})$ for all 
$n \ge 1$ and when $\nu=k<\infty$, then $\theta_i=\theta_{i,1}\neq \theta_{i,0}$, i.e., $f_{\theta_i,n}(X_i(n)|\Xb_i^{n-1})=f_{\theta_{0,i},n}(X_i(n)|\Xb_i^{n-1})$ for $n \le k$ and 
$f_{\theta,n}(X_i(n)|\Xb_i^{n-1})=f_{\theta_{i,1},n}(X_{n}|\Xb^{n-1})$ for $n > k$. Not only the point of change $\nu$, but also the subset $\Bc$, its size $|\Bc|$, and the post-change parameters 
$\theta_{i,1}$ are unknown.

In the case where $f_{\theta_i,n}(X_i(n)|\Xb_i^{n-1})= f_{\theta_i}(X_i(n))$, i.e., when  the observations in the $i$th stream are independent and identically distributed (i.i.d.) according to a distribution
with density $f_{\theta_{0,i}}(X_i(n))$ in the pre-change mode and with density $ f_{\theta_{1,i}}(X_n)$ in the post-change mode this problem was considered in
\cite{Mei-B2010,felsokIEEEIT2016,Xie&Siegmund-AS13, Tartakovskyetal-SM06,TartakovskyIEEECDC05,TNB_book2014}. 
Specifically, in the case of a known post-change parameter and $K=1$ (i.e., when only one stream can be affected but it is unknown which one), Tartakovsky~\cite{TartakovskyIEEECDC05} 
proposed to use a multi-chart CUSUM procedure that raises an alarm when one of the partial CUSUM statistics exceeds a threshold. This procedure is very simple, 
but it is not optimal and performs poorly when many data streams are affected. To avoid this drawback, Mei~\cite{Mei-B2010} suggested a SUM-CUSUM rule based on the sum of CUSUM 
statistics in streams and evaluated its first-order performance, which shows that this detection scheme is first-order asymptotically minimax minimizing the maximal expected delay to detection
when the average run length (ARL) to false alarm approaches infinity. Fellouris and Sokolov~\cite{felsokIEEEIT2016} suggested more efficient generalized and mixture-based CUSUM rules 
that are second-order minimax.  Xie and Siegmund~\cite{Xie&Siegmund-AS13} considered a particular Gaussian model with an unknown post-change mean. 
They suggested a rule that combines mixture likelihood ratios that incorporate an assumption about the proportion of affected data streams with the generalized 
CUSUM statistics in streams and then add up the resulting local statistics. They also performed a detailed asymptotic analysis of the proposed
detection procedure in terms of the ALR to false alarm and the expected delay as well as MC simulations. Chan~\cite{Chan-AS2017} studied a version of the 
mixture likelihood ratio rule for detecting a change in the mean of the normal population assuming independence of data streams and established its asymptotic optimality  in a minimax setting as well as 
dependence of operating characteristics on the fraction of affected streams.

In the present paper, we consider a Bayesian problem with a general prior distribution of the change point and we generalize the results of Tartakovsky~\cite{TartakovskyIEEEIT2018} 
for a single data stream and a general stochastic model to multiple data streams with an unknown pattern, i.e., when the size and location of the affected streams are unknown. 
It is assumed that the observations can be dependent and non-identically distributed in data streams and even across the streams. We introduce two double-mixture detection procedures -- the first one
mixes the Shiryaev-type statistic over the distributions of the unknown pattern and unknown post-change parameter; the second one is the double-mixture Shiryaev--Roberts statistic. 
The resulting statistics are then compared to appropriate thresholds.  The main contribution of the present article (Part 1), as well as of the companion article (Part 2), is two-fold. In Part 1, 
we present a general theory for very general stochastic models, providing sufficient conditions under which the suggested detection procedures are first-order asymptotically optimal.  
In the companion article, we will consider the ``i.i.d.'' case, where data streams are mutually independent and also data
in each stream are independent, and we will provide higher-order asymptotic approximations to the operating characteristics -- the average detection delay and the probability of false alarm. 
We will also examine the accuracy of these approximations and compare the performance of several procedures by Monte Carlo simulations.

The remainder of the paper is organized as follows.  In Section~\ref{sec:Procedures}, we introduce notation and describe a general stochastic model and detection procedures. 
In Section~\ref{sec:Problem}, we formulate the asymptotic optimization problems and assumptions on the prior distribution of the change point and on the model. In Section~\ref{sec:LowerBounds},
we provide asymptotic lower bounds for moments of the detection delay in the class of detection procedures with the given weighted probability of false alarm, which are then used in
Section~\ref{sec:AODMS} and in Section~\ref{sec:AOIR} for establishing first-order asymptotic optimality property of the double-mixture detection rules with respect to moments of detection delay
as the probability of false alarm and cost of delay in change detection approach zero. Section~\ref{sec:AOMS} provides a connection with the problem where the post-change parameter is either 
known or pre-selected. In Section~\ref{sec:indstreams}, the results are specified in the case of mutually independent data streams, which was the basic assumption in all previous publications, 
but we still assume that the observations in streams are non-i.i.d. Section~\ref{sec:Ex} illustrates general results by examples that justify asymptotic optimality properties of proposed detection 
procedures. Section~\ref{sec:Remarks} concludes the paper with remarks and a short discussion.

%%%___________________________________________
\section{A multistream model and change detection procedures based on mixtures} \label{sec:Procedures}

%%_______________________________________________________________________________________
\subsection{The general multistream model}\label{ssec:Model}

Consider the multistream scenario where the observations $\Xb=(X_{1}, \ldots, X_{N})$ are sequentially acquired in $N$ streams (sources, channels), i.e.,
in the $i$th stream one observes a sequence $X_i=\{X_{i}(n)\}_{n \ge1}$, where $i \in [N]:=\{1, \ldots,N\}$.   Let $\Pb_\infty$ denote the probability measure corresponding to the 
sequence of observations $\{\Xb_n\}_{n\ge 1}$  from all $N$ streams when there is never a change ($\nu=\infty$) in any of the components and,  for $k=0,1,\dots$ and $\Bc \subset [N]$, 
let $\Pb_{k,\Bc}$ denote the measure  corresponding to the sequence  $\{\Xb_n\}_{n\ge 1}$ when $\nu=k<\infty$ and the change occurs in a subset $\Bc$ of the set $\Pc$ 
(i.e., $X_i(\nu+1)$, $i\in \Bc$ is the first post-change observation). It is convenient to parametrize the post-change distribution $\Pb_{k,\eta}$ 
of  $\Xb=(X_{1}, \ldots, X_{N})$ by  an $N$-dimensional parameter vector, 
$\eta=(\eta_1, \ldots, \eta_{N})$, where each component $\eta_i$ takes values in the binary set $\{0,1\}$, $i \in [N]$.   
Let  $\Hyp_\infty$ denote the  hypothesis that there is no change, under which all components of $\eta$ are equal to 0. For any subset of components, $\Bc \subset [N]$, let $\Hyp_{k, \Bc}$ 
be the hypothesis according to which  only the  components of $\eta$ in $\Bc$ are non-zero after the change point $\nu=k$,  i.e., 
\begin{align} \label{eq:setup:param_change}
    \begin{split}
\eta_i&=0, \qquad i \in  [N] \qquad \text{under} \quad \Hyp_\infty \\
        \eta_i &=
        \begin{cases}
            1 , & \quad i \in \Bc, \\
            0 , &  \quad i \notin \Bc,
        \end{cases}
        \qquad \text{under} \quad \Hyp_{k, \Bc}.
    \end{split}
\end{align}
The set $\Pc$ is a  class of subsets of $[N]$ that  incorporates available prior information regarding the subset of non-zero components of $\eta$.    For example,  
when it is known that \textit{exactly} $K$ streams can be affected after the change occurs,  then $\Pc=\widetilde{\Pc}_K$, and when it is known  that \textit{at most} $K$ 
channels can be affected, then  $\Pc=\Pc_K$, where
\begin{align} \label{upperlowerclass2}
\begin{split}
\widetilde{\Pc}_K & = \{\Bc \subset [N] : |\Bc|=K\} , \\
\Pc_K &= \{\Bc \subset [N] : 1 \le |\Bc| \le K\}.  
\end{split}
\end{align}
Hereafter we denote by $|\Bc|$ the size of a subset $\Bc$, i.e., the number of non-zero components under $\Hyp_{k, \Bc}$ and $|\Pc|$ denotes the size of class $\Pc$, i.e., the number of possible alternatives 
in $\Pc$. Note that $|\Pc|$ takes maximum value when there is no prior information regarding the subset of affected components of $\eta$, i.e., when  $\Pc=\Pc_{N}$, in which case $|\Pc|=2^{N}-1$.

We will write $\Xb_i^{n}=(X_i(1),\dots,X_i(n))$ for the concatenation of the first $n$ observations from the $i$th data stream and $\Xb^{n}=(\Xb(1),\dots,\Xb(n))$ for the concatenation of the first $n$ 
observations from all $N$ data streams. Let $\{g(\Xb(n)|\Xb^{n-1})\}_{n\ge 1}$ and $\{f_{\Bc}(\Xb_{n}|\Xb^{n-1})\}_{n\ge 1}$ be sequences of conditional densities of $\Xb(n)$ 
given $\Xb^{n-1}$, which may depend on $n$, i.e., $g=g_n$ and $f_\Bc=f_{\Bc,n}$.  For the general non-i.i.d.\  changepoint model, 
which we are interested in, the joint density $p(\Xb^n | H_{k,\Bc})$ under hypothesis $\Hyp_{k,\Bc}$ can be written as follows
\begin{align}
p(\Xb^n | H_{k,\Bc}) & = f_\infty(\Xb^n) = \prod_{t=1}^n g(\Xb(t)|\Xb^{t-1}) \quad \text{for}~~ \nu=k \ge n ,
\label{noniidmodelpre}
\\
p(\Xb^n | H_{k,\Bc}) & =  \prod_{t=1}^{k}  g(\Xb(t)|\Xb^{t-1}) \times \prod_{t=k+1}^{n}  f_{\Bc}(\Xb(t)|\Xb^{t-1})  \quad \text{for}~~ \nu=k < n,
\label{noniidmodelpost}
\end{align}
where $\Bc\subset \Pc$. Therefore, $g(\Xb_{n}|\Xb^{n-1})$ is the pre-change conditional density and $f_{\Bc}(\Xb_{n}|\Xb^{n-1})$ is the post-change conditional density given that the change 
occurs in the subset $\Bc$.

In most practical applications, the post-change distribution is not completely known -- it depends on an unknown (generally multi-dimensional) parameter $\theta\in\Theta$, 
so that the model \eqref{noniidmodelpost} 
may be treated only as a benchmark for a more practical case where the post-change densities $f_{\Bc}(\Xb(t)|\Xb^{t-1})$ are replaced by $f_{\Bc,\theta}(\Xb(t)|\Xb^{t-1})$, i.e.,
 \begin{align}
p(\Xb^n | H_{k,\Bc},\theta) & =  \prod_{t=1}^{k}  g(\Xb(t)|\Xb^{t-1}) \times \prod_{t=k+1}^{n}  f_{\Bc,\theta}(\Xb(t)|\Xb^{t-1})  \quad \text{for}~~ \nu=k < n.
\label{noniidmodelpostunknown}
\end{align}

In what follows we assume that the change point $\nu$ is a random variable independent of the observations with prior distribution 
$\pi_k=\Pb(\nu=k)$, $k=0,1,2,\dots$ with $\pi_k >0$ for $k\in\{0,1,2, \dots\}=\Zbb_+$. We will also assume that a change point may take negative values, which means that the change has 
occurred by the time the observations became available. However,  the detailed structure of the distribution $\Pb(\nu=k)$ for $k=-1,-2,\dots$ is not important. 
The only value which matters is the total probability $q=\Pb(\nu \le -1)$ of the change being in effect before the observations become available. 

%%%_________________________________________________________________________
\subsection{Double-mixture change detection procedures}\label{ssec:Proceduresunknown}

We begin by considering the most general scenario where the observations across streams are dependent and then go on tackling the scenario where the streams are mutually independent.

%%_______________________________________________________________________________________
 \subsubsection{The general case}\label{sssec:General}
 
Let $\Lc_{\Bc,\theta}(n) = f_{\Bc,\theta}(\Xb(n)|\Xb^{n-1})/g(\Xb(n)|\Xb^{n-1})$. Note that in the general non-i.i.d.\ case the statistic 
$\Lc_{\Bc,\theta}(n)= \Lc_{\Bc,\theta}^{(k)}(n)$ depends on the change point $\nu=k$ since the post-change density  $ f_{\Bc,\theta}(\Xb(n)|\Xb^{n-1})$ may depend on $k$.
The likelihood ratio (LR) of the hypothesis ``$\Hyp_{k, \Bc}: \nu=k, \eta_i=1 ~ \text{for} ~ i \in \Bc$'' that the change occurs at $\nu=k$ in the subset of streams $\Bc$  against the no-change hypothesis 
``$\Hyp_\infty: \nu =\infty$'' based on the sample $\Xb^n=(\Xb(1),\dots,\Xb(n))$ is given by the product
\[
LR_{\Bc,\theta}(k, n) = \prod_{t=k+1}^{n}  \Lc_{\Bc,\theta}(t), \quad n > k
\]
and we set $LR_{\Bc,\theta}(k,n)=1$ for $n \le k$. For $\Bc \in \Pc$ and $\theta\in\Theta$, where $\Pc$ is an arbitrary class of subsets of $[N]$, define the statistic 
\begin{equation}\label{S_stat_noniid}
S_{\Bc,\theta}^\pi(n) = \frac{1}{\Pb(\nu \ge n)} \brcs{q LR_{\Bc,\theta}(0, n)+ \sum_{k=0}^{n-1} \pi_k \prod_{t=k+1}^{n} LR_{\Bc,\theta}(k, n)} , ~~ n \ge 1, ~~ S_{\Bc,\theta}^\pi(0)=q/(1-q),
\end{equation}
which is the Shiryaev-type statistic for detection of a change when it happens in a subset of streams $\Bc$ and the post-change parameter is $\theta$.

Next, let 
\[
\pb = \set{p_\Bc, \Bc \in \Pc}, \quad p_\Bc >0 ~ \forall ~ \Bc \in \Pc, \quad \sum_{\Bc \in \Pc} p_\Bc =1
\]
be the probability mass function on $[N]$ (mixing measure), and define the mixture statistic
\begin{equation}\label{Sp_stat_noniid}
S_{\pb,\theta}^\pi(n) = \sum_{\Bc \in \Pc} p_\Bc S_{\Bc,\theta}^\pi(n), \quad S_{\pb,\theta}^\pi(0)=q/(1-q).
\end{equation}
This statistic can be also represented as
\begin{equation}\label{Sp_stat_noniid1}
\begin{split}
S_{\pb,\theta}^\pi(n)& = \frac{1}{\Pb(\nu \ge n)} \brcs{q  \Lambda_{\pb,\theta}(0,n) + \sum_{k=0}^{n-1} \pi_k \Lambda_{\pb,\theta}(k,n)}  ,
\end{split}
\end{equation}
where
\[
\Lambda_{\pb,\theta}(k,n) =  \sum_{\Bc \in \Pc} p_\Bc  LR_{\Bc,\theta}(k, n)
\]
is the mixture LR.

When the parameter $\theta$ is unknown there are two conventional approaches -- either to maximize or average (mix) over  $\theta$.  Introduce a mixing measure 
$W(\theta)$, $\int_{\Theta} \rm{d} W(\theta) =1$, which can be interpreted as a prior distribution and
define the double LR-mixture (average LR) 
\begin{equation}\label{ALR} 
\begin{split}
\Lambda_{\pb,W}(k,n) & =  \int_\Theta \sum_{\Bc \in \Pc} p_\Bc  LR_{\Bc,\theta}(k, n)  \, \mrm{d} W(\theta)
\\
& = \int_\Theta \Lambda_{\pb,\theta}(k,n)  \, \mrm{d} W(\theta), \quad k < n
\end{split}
\end{equation}
and the double-mixture Shiryaev-type statistic 
\begin{equation}\label{DMS_stat}
\begin{split}
S_{\pb,W}^{\pi}(n) & = \int_\Theta \sum_{\Bc \in \Pc} p_\Bc S_{\Bc,\theta}^\pi(n) \, \mrm{d} W(\theta)
\\
& = \frac{1}{\Pb(\nu \ge n)} \brcs{q  \Lambda_{\pb,W}(0,n) + \sum_{k=0}^{n-1} \pi_k \Lambda_{\pb,W}(k,n)}  .
\end{split}
\end{equation}
The corresponding double-mixture LR-based detection procedure is given by the stopping rule which is the first time $n\ge 1$ such that the  statistic $S_{\pb,W}^{\pi}(n)$ hits the level $A>0$, i.e.,
\begin{equation}\label{DMS_def}
T_A^W=\inf\set{n \ge 1: S_{\pb,W}^\pi(n) \ge A}.
\end{equation}

Another popular statistic for detecting a change from  $\{g(\Xb(n)|\Xb^{n-1})\}$ to $\{f_{\Bc,\theta}(\Xb(n)|\Xb^{n-1})\}$, which has certain optimality properties 
\cite{PollakTartakovsky-SS09,tartakovsky-tpa11,PolunTartakovskyAS09,TNB_book2014}, is the generalized Shiryaev--Roberts (SR) statistic
\begin{equation}\label{SR_stat_noniid}
R_{\Bc,\theta}(n) =\omega LR_{\Bc,\theta}(0, n) + \sum_{k=0}^{n-1} LR_{\Bc,\theta}(k, n) , \quad n \ge 1, ~~ R_{\Bc,\theta}(0) =\omega
\end{equation}
with a non-negative head-start $\omega \ge 0$. For a fixed value of $\theta$, introduce the mixture statistic 
\begin{equation}\label{MSR_stat}
\begin{split}
R_{\pb,\theta}(n)&= \sum_{\Bc \in \Pc} p_\Bc R_{\Bc,\theta}(n)
\\
& = \omega  \Lambda_{\pb,\theta}(0,n) +  \sum_{k=0}^{n-1}  \Lambda_{\pb,\theta}(k,n)  \quad n \ge 1, ~~ R_{\pb,\theta}(0)=\omega ,
\end{split}
\end{equation}
and the generalized double-mixture SR statistic
\begin{equation}\label{DMSR_stat}
\begin{split}
R_{\pb,W}(n)&= \int_\Theta \sum_{\Bc \in \Pc} p_\Bc R_{\Bc,\theta}(n) \, \mrm{d} W(\theta)
\\
& = \omega  \Lambda_{\pb,W}(0,n) +  \sum_{k=0}^{n-1}  \Lambda_{\pb,W}(k,n),  \quad n \ge 1, ~~ R_{\pb,W}(0)=\omega 
\end{split}
\end{equation}
(with a non-negative head-start $\omega$) as well as the corresponding stopping rule  
\begin{equation}\label{DMSR_def}
\wtT_A^W=\inf\set{n \ge 1: R_{\pb,W}(n) \ge A}, \quad A>0.
\end{equation}

Note that we consider a very general stochastic model where not only the observations in streams may be dependent and non-identically distributed, but also the streams may be mutually dependent. 
In this very general case, computing statistics $S_{\pb,W}^\pi(n)$ and $R_{\pb,W}(n)$ is problematic even when the statistics in data streams $S_i^\pi(n)$ and $R_i(n)$, $i=1,\dots,N$, 
can be computed. The computational problem becomes manageable when the data between data streams are independent, as discussed in the next subsection. 

%%_______________________________________________________________________________________
\subsubsection{Independent streams}\label{sssec:indepchannels}

Consider now a special scenario where the data across streams are independent. Note that in the case of independent streams the post-change parameters can be assumed different in streams, i.e., 
$\theta=\theta_i$ for the $i$th stream, $i=1,\dots,N$. In contrast to the general case of dependent streams, this does not lead to an additional complication. Thus, we have
\begin{equation}\label{ind}
\begin{split}
p(\Xb^n | H_{k,\Bc}, \teb_\Bc) & = f_\infty(\Xb^n) = \prod_{t=1}^n \prod_{i=1}^N g_i(X_i(t)|\Xb_i^{t-1}) \quad \text{for}~ \nu=k \ge n ,
\\
p(\Xb^n | H_{k,\Bc},\teb_\Bc) & =  \prod_{t=1}^{k}  \prod_{i=1}^N g_i(X_i(t)|\Xb_i^{t-1}) \times 
\\
& \quad \prod_{t=k+1}^{n}  \prod_{i\in \Bc} f_{i,\theta_i}(X_i(t)|\Xb_i^{t-1}) \prod_{i\notin \Bc} g_i(X_i(t)|\Xb_i^{t-1}) 
\quad \text{for}~ \nu=k < n,
\end{split}
\end{equation}
where $g_i(X_i(t)|\Xb_i^{t-1})$ and $f_{i,\theta_i}(X_i(t)|\Xb_i^{t-1})$ are conditional pre- and post-change densities in the $i$th data stream, respectively, and $\teb_\Bc=(\theta_i,i\in\Bc)$.
So the LRs are
\begin{equation}\label{LRind}
LR_{\Bc,\theta_\Bc}(k, n) = \prod_{i\in \Bc} LR_{i,\theta_i}(k,n), \quad LR_{i,\theta_i}(k,n)= \prod_{t=k+1}^{n}  \Lc_{i,\theta_i}(t), \quad n > k,
\end{equation}
where $\Lc_{i,\theta_i}(t)=f_{i,\theta_i}(X_i(t)|\Xb_i^t)/g_i(X_i(t)|\Xb_i^t)$.

Assume in addition that the mixing measure is such that
\[
p_\Bc= C(\Pc_K) \prod_{i\in \Bc} p_i, \quad C(\Pc_K) = \brc{\sum_{\Bc \in \Pc_K} \prod_{i \in \Bc} p_i}^{-1}. 
\]
Then the mixture LR is
\[
\Lambda_{\pb,\teb}(k,n) = C(\Pc_K) \sum_{i=1}^K \sum_{\Bc\in \widetilde{\Pc}_i} \prod_{j\in\Bc} p_j LR_{j,\theta_i}(k,n),
\]
and its computational complexity is polynomial in the number of data streams. Moreover, in the special, perhaps most interesting and difficult case of $K=N$ and $p_j=p$, we obtain
\begin{equation}\label{mixLRind}
\Lambda_{\pb,\teb}(k,n) = C(\Pc_N) \brcs{\prod_{i=1}^N \brc{1+ p LR_{i,\theta_i}(k,n)} -1},
\end{equation}
so its computational complexity is only $O(N)$. The representation \eqref{mixLRind} corresponds to the case when each stream is affected independently with probability $p/(1+p)$, 
the assumption that was made in \cite{Xie&Siegmund-AS13}.

%%%______________________________________________________
\section{Asymptotic optimality problems and assumptions}\label{sec:Problem}

Let $\EV_{k,\Bc,\theta}$ and $\EV_\infty$ denote expectations under $\Pb_{k,\Bc,\theta}$ and $\Pb_\infty$, respectively, where $\Pb_{k,\Bc,\theta}$ corresponds to 
model \eqref{noniidmodelpostunknown} with an unknown parameter $\theta\in\Theta$. Define the probability measure 
$\Pb^\pi_{\Bc,\theta} (\Ac\times \mc{K})=\sum_{k\in \mc{K}}\,\pi_{k} \Pb_{k,\Bc,\theta}\left(\Ac\right)$  under which the change point $\nu$ has distribution $\pi=\{\pi_k\}$ and the model for the observations
is of the form \eqref{noniidmodelpre},\eqref{noniidmodelpostunknown}, i.e., $\Xb(t)$ has conditional density $g(\Xb(t)| \Xb^{t-1})$ if $\nu \le k$ and conditional density 
$f_{\Bc,\theta}(\Xb(t)| \Xb^{t-1})$ if $\nu > k$ and the change occurs in the subset $\Bc$ with the parameter $\theta$. Let $\Eb^\pi_{\Bc,\theta}$ denote the corresponding expectation.

For $r \ge 1$, $\nu=k \in \Zbb_+$, $\Bc\in\Pc$, and $\theta\in\Theta$ introduce the risk associated with the conditional $r$th moment of the detection delay
%\begin{equation}\label{SCrADD}
$\Rc^r_{k,\Bc,\theta}(T)=  \EV_{k, \Bc,\theta}\left[(T-k)^r\,|\, T> k \right]$.
%\end{equation}
In a Bayesian setting, the risk associated with the moments of delay to detection  is 
\begin{equation} \label{Riskdef}
\Rca^r_{\Bc,\theta}(T): = \Eb^\pi_{\Bc,\theta} [ (T-\nu)^r | T> \nu]= 
\frac{{\displaystyle\sum_{k=0}^\infty} \pi_k \Rc^r_{k,\Bc,\theta}(T)\Pb_\infty( T >k)}{1-\PFA( T)} ,
\end{equation}
where 
\begin{equation} \label{PFAdef}
\pfa(T)=\Pb^\pi_{\Bc,\theta}( T \le \nu)= \sum_{k=0}^\infty \pi_k \Pb_\infty( T \le k) 
\end{equation}
is the weighted probability of false alarm (PFA) that corresponds to the risk associated with a false alarm. Note that in \eqref{Riskdef} and \eqref{PFAdef} we used the fact that
$\Pb_{k,\Bc,\theta}(T \le k) = \Pb_\infty(T\le k)$ since the event $\{T \le k\}$ depends on the observations $\Xb_1,\dots \Xb_k$ generated by the pre-change probability measure $\Pb_\infty$ since
by our convention $\Xb_k$ is the last pre-change observation if $\nu=k$.

In this article, we are interested  in the Bayesian (constrained) optimization problem
\begin{equation}\label{sec:PrbfBayes}
\inf_{\{T: \PFA(T) \le \alpha\}}\,\Rca_{\Bc,\theta}^r(T)  \quad \text{for all} ~ \Bc\in \Pc, ~~ \theta\in\Theta.
\end{equation}
However, in general this problem is intractable for every value of the PFA $\alpha\in (0, 1)$. So we will focus on the asymptotic problem assuming that the PFA $\alpha$ approaches zero. 
Specifically, we will be interested in proving that the double-mixture detection procedure $T_A^W$ is first-order uniformly asymptotically optimal for all possible subsets $\Bc\in \Pc$ where the change 
may occur and all parameter values $\theta\in\Theta$ , i.e.,
\begin{equation}\label{FOAOdef}
\lim_{\alpha\to0} \frac{\displaystyle\inf_{T\in\class}\Rca_{\Bc,\theta}^r(T)}{\Rca_{\Bc,\theta}^r(T_A^W)} =1   \quad \text{for all} ~ \Bc\in \Pc, ~~ \theta\in\Theta
\end{equation} 
and
\begin{equation}\label{FOAOunifdef}
\lim_{\alpha\to0} \frac{\displaystyle\inf_{T\in\class}\Rc_{k, \Bc,\theta}^r(T)}{\Rc_{k, \Bc,\theta}^r(T_A^W)} =1   \quad \text{for all} ~ \Bc\in \Pc,~ \theta\in\Theta ~ \text{and all}~ \nu=k\in \Zbb_+,
\end{equation} 
where $\class=\{T: \PFA(T) \le \alpha\}$ is the class of detection procedures for which the PFA does not exceed a prescribed number $\alpha \in (0,1)$ and $A=A_\alpha$ is suitably selected.

First-order asymptotic optimality properties of the double-mixture SR-type detection procedure $\wtT_A^W$ under certain conditions will be also established.

Instead of the constrained optimization problem \eqref{sec:PrbfBayes} one may be also interested in the unconstrained Bayes problem with the average (integrated) risk function
\begin{equation} \label{Averrisk}
\begin{split}
\rho_{\pi, \pb, W}^{c,r}(T) &= \Eb\brcs{\Ind{T\le \nu} + c \, (T-\nu)^r \Ind{T > \nu}} 
\\
&=  \PFA(T) + c \, \sum_{\Bc\in\Pc} p_{\Bc} \int_\Theta \Eb_{\Bc,\theta}^\pi[(T-\nu)^+]^r \, \drm W(\theta),
\end{split}
\end{equation}
where $c>0$ is the cost of delay per unit of time  and  $r \ge 1$. An unknown post-change parameter $\theta$ and an unknown location of the change pattern $\Bc$ are now assumed random and 
the weight functions $W(\theta)$ and $p_\Bc$ are interpreted as the prior distributions of $\theta$ and $\Bc$, respectively.  The first-order asymptotic problem is
\begin{equation}\label{FOAOdefAvRisk}
\lim_{c \to0} \frac{\displaystyle\inf_{T \ge 0}\rho_{\pi, \pb, W}^{c,r}(T)}{\rho_{\pi, \pb, W}^{c,r}(T_A^W)} =1,   
\end{equation} 
where threshold $A=A_{c,r}$ that depends on the cost $c$ should be suitably selected.

While we consider a general prior and a very general stochastic model for the observations in streams and between streams, to study asymptotic optimality properties we still 
need to impose certain constraints on the prior distribution $\{\pi_k\}$ and on the general stochastic model \eqref{noniidmodelpre}--\eqref{noniidmodelpost} that guarantee 
asymptotic stability of the detection statistics as the sample size increases. 

In what follows, we assume that the prior distribution $\pi^\alpha=\{\pi_k^\alpha\}$ may depend on $\alpha$ and the following condition is imposed:
\vspace{2mm}

\noindent $\mb{CP} \mb{1}$. {\em  For some $0 \le \mu_\alpha <\infty$ and  $0 \le \mu <\infty$},
\begin{equation}\label{Prior}
\lim_{n\to\infty}\frac{1}{n}\abs{\log \sum_{k=n+1}^\infty \pi_k^\alpha} = \mu_\alpha \quad \text{and} \quad \lim_{\alpha\to0} \mu_\alpha = \mu. 
\end{equation}
The class of prior distributions satisfying condition $\mb{CP} \mb{1}$  will be denoted by $\Cb(\mu)$.

For establishing asymptotic optimality properties of change detection procedures we will assume in addition that the following two condition hold:
\vspace{2mm}

\noindent $\mb{CP} {\mb 2}$. {\em If $\mu_\alpha>0$ for all $\alpha$ and $\mu=0$, then $\mu_\alpha$ approaches zero at such rate that for some $r\ge 1$} 
\begin{equation}\label{Prior1}
\lim_{\alpha\to 0} \frac{{\sum_{k=0}^\infty  \pi_k^\alpha |\log \pi_k^\alpha|^r}}{|\log \alpha|^r} = 0.
\end{equation} 
\noindent $\mb{CP} {\mb 3}$. {\em For all $k \in \Zbb_+$} 
\begin{equation}\label{Prior2}
\lim_{\alpha\to 0} \frac{|\log \pi_k^\alpha|}{|\log \alpha|} = 0.
\end{equation}

Note that if $\mu >0$, then the prior distribution has an exponential right tail (asymptotically) with the positive parameter $\mu$, in which case, condition \eqref{Prior1} holds since 
$\lim_{\alpha\to 0} \sum_{k=0}^\infty  \pi_k^\alpha |\log \pi_k^\alpha|^r<\infty$ for all $r>0$. If $\mu=0$, the distribution has a heavy tail (at least asymptotically) and
we cannot allow this distribution to have a too heavy tail, which will generate very large time intervals between change points.
This is guaranteed by condition  $\mb{CP} {\mb 2}$. Note that condition  $\mb{CP} {\mb 1}$ excludes light-tail distributions with unbounded hazard rates for which $\mu=\infty$ 
and the time-intervals with a change point are very short (e.g., Gaussian-type or Weibull-type with the shape parameter $\kappa>1$). 
In this case, prior information dominates information obtained from the observations, the change can be easily detected at early stages, and the asymptotic analysis is impractical.  Note also that 
if the prior distribution does not depend on $\alpha$, then in condition $\mb{CP} \mb{1}$ $\mu_\alpha=\mu$ and $\mb{CP} \mb{2}$ holds when $\sum_{k=0}^\infty \pi_k |\log\pi_k|^r < \infty$ 
for some $r\ge 1$. These conditions were used in \cite{TartakovskyIEEEIT2017}.
 
For $\Bc \in \Pc$ and $\theta\in\Theta$, introduce the log-likelihood ratio (LLR) process $\{\lambda_{\Bc,\theta}(k, n)\}_{n \ge k+1}$ between the hypotheses 
``$\Hyp_{k,\Bc}, \theta$'' ($k=0,1, \dots$) and $\Hyp_\infty$:
$$
\lambda_{\Bc,\theta}(k, n) = \sum_{t=k+1}^{n}\, \log \frac{f_{\Bc,\theta}(\Xb(t) | \Xb^{t-1})}{g(\Xb(t)|\Xb^{t-1})}, \quad n >k 
$$
($\lambda_{\Bc,\theta}(k, n)=0$ for $n \le k$). 

Define
\[
\beta_{M,k}(\varepsilon,\Bc,\theta)=\Pb_{k,\Bc,\theta}\set{\frac{1}{M}\max_{1 \le n \le M} \lambda_{\Bc,\theta}(k, k+n) \ge (1+\varepsilon) I_{\Bc,\theta}}
\]
and for $\delta>0$ define $\Gamma_{\delta,\theta}=\{\vartheta\in\Theta\,:\,\vert \vartheta-\theta\vert<\delta\}$ and
\[
\Upsilon_{r}(\varepsilon, \Bc, \theta) = 
\sum_{n=1}^\infty n^{r-1} \sup_{k\in \Zbb_+} \Pb_{k,\Bc, \theta}\set{\frac{1}{n} \inf_{\vartheta\in\Gamma_{\delta,\theta}}\lambda_{\Bc,\vartheta}(k, k+n) < I_{\Bc,\theta}  - \varepsilon}.
\]
Regarding the general model for the observations \eqref{noniidmodelpre}, \eqref{noniidmodelpostunknown} we assume that the following two conditions are satisfied:
\vspace{2mm}

\noindent $\C_{1}$. {\em  There exist positive and finite numbers $I_{\Bc,\theta}$ ($\Bc \in \Pc$, $\theta\in \Theta$) such that the LLR $n^{-1}\lambda_{\Bc,\theta}(k,k+n) \to  I_{\Bc,\theta}$ in
$\Pb_{k, \Bc,\theta}$-probability and for any $\varepsilon >0$}
\begin{equation}\label{sec:Pmax}
\lim_{M\to\infty}  \beta_{M,k}(\varepsilon,\Bc,\theta) =0 \quad \text{for all}~ k\in \Zbb_+, \Bc\in \Pc, \theta\in\Theta ;
\end{equation}

\noindent $\C_{2}$. {\em  For any $\varepsilon>0$ there exists $\delta=\delta_{\varepsilon}>0$ such that $W(\Gamma_{\delta,\theta})>0$
and for any $\varepsilon>0$ and some $r\ge 1$}
\begin{equation}\label{rcompLeft}
\Upsilon_{r}(\varepsilon, \Bc, \theta) < \infty \quad \text{for all}~  \Bc\in\Pc, \theta\in\Theta .
\end{equation}

%%%_______________________________________________________________________________________________
\section{Asymptotic lower bounds for moments of the detection delay and average risk function}\label{sec:LowerBounds}

In order to establish asymptotic optimality of detection procedures we first obtain, under condition $\C_1$, asymptotic (as $\alpha\to0$) lower bounds for moments 
of the detection delay  $\Rca^r_{\Bc,\theta}(T)=\Eb^\pi_{\Bc,\theta}\brcs{\brc{ T-\nu}^r| T >\nu}$ and $\Rc^r_{k, \Bc,\theta}= \Eb_{k, \Bc,\theta}\brcs{\brc{ T-k}^r| T >k}$
of any detection procedure $T$ from class $\class$. 
In the following sections, we  show that under 
condition $\C_2$ these bounds are attained for the double-mixture procedure $T_{A}^W$  uniformly for all $\Bc\in\Pc$ and $\theta\in\Theta$ and
that the same is true for the double-mixture procedures $\wtT_A^W$ when the prior distribution is either heavy-tailed or
has an exponential tail with a small parameter $\mu$. We also establish the asymptotic lower bound for the integrated risk $\rho_{\pi, \pb, W}^{c,r}(T)$ as $c\to0$ in the class of all Markov times and show that it is
attained by the double-mixture procedures $T_{A}^W$ and $\wtT_A^W$.

Define
\begin{equation}\label{R}
\Rca_{\pb, W}^{r}(T) = \sum_{\Bc\in\Pc} p_\Bc \int_\Theta  \Rca_{\Bc,\theta}^r(T) \, \drm W(\theta) 
\end{equation}
and
\begin{equation}\label{Dr}
D_{\mu,r} =  \sum_{\Bc\in\Pc} p_\Bc \int_\Theta \brc{\frac{1}{I_{\Bc,\theta} + \mu}}^r \, \drm W(\theta).
\end{equation}

Asymptotic lower bounds for all positive moments of the detection delay and the integrated risk are specified in the following theorem.

%%Theorem
\begin{theorem}\label{Th:LB}
Let, for some $\mu\ge 0$, the prior distribution belong to class $\Cb(\mu)$. 
Assume that for some positive and finite numbers $I_{\Bc,\theta}$ ($\Bc\in\Pc$, $\theta\in \Theta$) condition $\C_1$ holds. Then for all $r >0$ and all 
$\Bc\in\Pc$, $\theta\in\Theta$
\begin{equation}\label{LBkinclass}
\liminf_{\alpha\to0} \frac{{\displaystyle\inf_{ T\in\class}}  \Rc^r_{k,\Bc,\theta}(T)}{|\log \alpha|^r} \ge  \frac{1}{(I_{\Bc,\theta}+\mu)^r}, \quad  k\in \Zbb_+,
\end{equation}
\begin{equation}\label{LBinclass}
\liminf_{\alpha\to0} \frac{{\displaystyle\inf_{ T\in\class}} \Rca^r_{\Bc,\theta}(T)}{|\log \alpha|^r} \ge  \frac{1}{(I_{\Bc,\theta}+\mu)^r} ,
\end{equation}
and for all $r>0$
\begin{equation}\label{LBAR}
\liminf_{c\to0} \frac{{\displaystyle\inf_{T\ge 0}}\rho_{\pi, \pb, W}^{c,r}(T)}{c |\log c|^r} \ge D_{\mu,r}.
\end{equation}
\end{theorem}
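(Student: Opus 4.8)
Fix $\Bc\in\Pc$, $\theta\in\Theta$ and abbreviate $I=I_{\Bc,\theta}$. All three bounds will be deduced from a single probabilistic estimate. Given $0<\varepsilon<1$, put $M_\alpha=\lceil(1-\varepsilon)|\log\alpha|/(I+\mu)\rceil$ and $C_\alpha=(1+\varepsilon)\,I\,M_\alpha$, so that $M_\alpha\to\infty$ and $C_\alpha\le(1-\gamma_0)|\log\alpha|$ for some $\gamma_0=\gamma_0(\varepsilon)>0$. The plan is to first establish the estimate
\[
\sup_{T\in\mbb{C}(\alpha)}\Pb_{k,\Bc,\theta}\bl(k<T<k+M_\alpha\br)\ \longrightarrow\ 0\quad\text{as}~\alpha\to0,\qquad k\in\Zbb_+,
\]
together with its prior--averaged counterpart $\sup_{T\in\mbb{C}(\alpha)}\Pb^\pi_{\Bc,\theta}(\nu<T<\nu+M_\alpha)\to0$. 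I expect this estimate to be the main obstacle, since it is exactly here that the information rate $I$ carried by the observations and the rate $\mu$ carried by the decay of the prior tail combine additively.

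To prove it I would use the change of measure $\drm\Pb_{k,\Bc,\theta}/\drm\Pb_\infty=e^{\lambda_{\Bc,\theta}(k,n)}$ on $\Fc_n$, write $\Pb_{k,\Bc,\theta}(k<T<k+M_\alpha)=\sum_{n=k+1}^{k+M_\alpha-1}\EV_\infty[\Ind{T=n}\,e^{\lambda_{\Bc,\theta}(k,n)}]$, and split each summand according to whether $\lambda_{\Bc,\theta}(k,n)\le C_\alpha$ or not. On $\{\lambda_{\Bc,\theta}(k,n)\le C_\alpha\}$ the exponential is at most $e^{C_\alpha}$, while the complementary part is contained in $\{\max_{1\le j\le M_\alpha}\lambda_{\Bc,\theta}(k,k+j)\ge(1+\varepsilon)IM_\alpha\}$; reverting to $\Pb_{k,\Bc,\theta}$ this yields
\[
\Pb_{k,\Bc,\theta}(k<T<k+M_\alpha)\ \le\ e^{C_\alpha}\,\Pb_\infty(T\le k+M_\alpha)\ +\ \beta_{M_\alpha,k}(\varepsilon,\Bc,\theta).
\]
The second term tends to $0$ by condition $\C_1$, eq.~\eqref{sec:Pmax}. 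For the first term, the constraint $T\in\mbb{C}(\alpha)$ gives the elementary bound $\Pb_\infty(T\le n)\le\alpha/\Pb(\nu\ge n)$, because $\alpha\ge\PFA(T)=\sum_j\pi_j^\alpha\Pb_\infty(T\le j)\ge\Pb_\infty(T\le n)\,\Pb(\nu\ge n)$; and since the prior lies in $\Cb(\mu)$, condition $\mb{CP}\mb{1}$ yields $\Pb(\nu\ge n)\ge e^{-(\mu+\varepsilon')n}$ for all sufficiently small $\alpha$ and all $n\ge k+M_\alpha$. Hence the first term is at most $e^{(\mu+\varepsilon')k}\,\alpha\,e^{C_\alpha+(\mu+\varepsilon')M_\alpha}$, and a short computation shows that for $\varepsilon'$ small enough (using, when $\mu=0$, that $\mu_\alpha\to0$, so that $(\mu+\varepsilon')M_\alpha$ may be replaced by $o(|\log\alpha|)$) the exponent is at most $(1-\gamma)|\log\alpha|$ for some $\gamma=\gamma(\varepsilon)>0$; thus this term is $O(\alpha^\gamma)\to0$ uniformly in $T\in\mbb{C}(\alpha)$. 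The prior--averaged version is obtained by averaging the displayed inequality against $\pi^\alpha$: the sum $\sum_k\pi_k^\alpha e^{C_\alpha}\Pb_\infty(k<T<k+M_\alpha)$ is controlled through the Fubini identity $\sum_n\Pb_\infty(T=n)\Pb(\nu\ge n)=\PFA(T)\le\alpha$ together with $\Pb(\nu\ge n-M_\alpha)\le e^{(\mu_\alpha+\varepsilon')M_\alpha}\Pb(\nu\ge n)$, while $\sum_k\pi_k^\alpha\beta_{M_\alpha,k}(\varepsilon,\Bc,\theta)\to0$ by splitting the sum at a large fixed index and using \eqref{sec:Pmax} on the finite part and the tail of $\pi^\alpha$ on the rest.

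Granting the estimate, \eqref{LBkinclass} and \eqref{LBinclass} follow quickly. By Markov's inequality, for every $T\in\mbb{C}(\alpha)$ and $r>0$,
\[
\Rc^r_{k,\Bc,\theta}(T)\ \ge\ M_\alpha^r\,\Pb_{k,\Bc,\theta}(T-k\ge M_\alpha\mid T>k)\ =\ M_\alpha^r\Bl(1-\frac{\Pb_{k,\Bc,\theta}(k<T<k+M_\alpha)}{\Pb_\infty(T>k)}\Br),
\]
where we used $\Pb_{k,\Bc,\theta}(T>k)=\Pb_\infty(T>k)$. Since $\Pb_\infty(T>k)\ge1-\alpha/\pi_k^\alpha\to1$ by $\mb{CP}\mb{3}$ while the numerator $\to0$ uniformly in $T$, we get $\inf_{T\in\mbb{C}(\alpha)}\Rc^r_{k,\Bc,\theta}(T)\ge M_\alpha^r(1-o(1))$; dividing by $|\log\alpha|^r$, taking $\liminf_{\alpha\to0}$ and then letting $\varepsilon\downarrow0$ yields \eqref{LBkinclass}. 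For \eqref{LBinclass}, note that $\Rca^r_{\Bc,\theta}(T)\ge\Eb^\pi_{\Bc,\theta}[((T-\nu)^+)^r]\ge M_\alpha^r\bl(\Pb^\pi_{\Bc,\theta}(T>\nu)-\Pb^\pi_{\Bc,\theta}(\nu<T<\nu+M_\alpha)\br)\ge M_\alpha^r\bl(1-\alpha-\Pb^\pi_{\Bc,\theta}(\nu<T<\nu+M_\alpha)\br)$, and conclude as before from the prior--averaged estimate.

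Finally, \eqref{LBAR} follows by a dichotomy on the false--alarm probability. Put $\alpha_c:=c\,|\log c|^{\,r+1}$, so that $\alpha_c\to0$ and $|\log\alpha_c|=(1+o(1))|\log c|$. If $\PFA(T)>\alpha_c$ then, by \eqref{Averrisk}, $\rho_{\pi,\pb,W}^{c,r}(T)\ge\PFA(T)>\alpha_c$ and $\alpha_c/(c|\log c|^r)=|\log c|\to\infty$. If $\PFA(T)\le\alpha_c$ then $T\in\mbb{C}(\alpha_c)$, and using $\Eb^\pi_{\Bc,\theta}[((T-\nu)^+)^r]\ge M_{\alpha_c}^r(1-o(1))$ for each $\Bc,\theta$ (with $M_{\alpha_c}\sim(1-\varepsilon)|\log\alpha_c|/(I_{\Bc,\theta}+\mu)$) together with dominated convergence in the integral over $\Theta$,
\[
\rho_{\pi,\pb,W}^{c,r}(T)\ \ge\ c\sum_{\Bc\in\Pc}p_\Bc\int_\Theta\Eb^\pi_{\Bc,\theta}[((T-\nu)^+)^r]\,\drm W(\theta)\ \ge\ c\,(1-\varepsilon)^r(1-o(1))\,|\log\alpha_c|^r\,D_{\mu,r}.
\]
Taking the infimum over all Markov times $T$, then $\liminf_{c\to0}$, and then $\varepsilon\downarrow0$ gives \eqref{LBAR}. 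The genuinely delicate points throughout are the uniformity in $\alpha$ of the prior--tail estimate $\Pb(\nu\ge n)\ge e^{-(\mu+o(1))n}$ (which is precisely what $\mb{CP}\mb{1}$ supplies) and, in the heavy--tailed case $\mu=0$ with an $\alpha$--dependent prior, the verification that $\sum_k\pi_k^\alpha\beta_{M_\alpha,k}(\varepsilon,\Bc,\theta)\to0$, for which one needs the prior not to spread out too fast.
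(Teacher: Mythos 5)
Your treatment of the fixed-$k$ bound \eqref{LBkinclass} is essentially the paper's argument: the change-of-measure inequality you derive is exactly the bound
$\Pb_{k,\Bc,\theta}(k<T\le k+N_\alpha)\le e^{(1+\varepsilon)I_{\Bc,\theta}N_\alpha}\Pb_\infty(T\le k+N_\alpha)+\beta_{N_\alpha,k}(\varepsilon,\Bc,\theta)$
that the paper imports from \cite{TartakovskyVeerTVP05}, and the remaining steps ($\Pb_\infty(T\le n)\le\alpha/\Pb(\nu\ge n)$, the prior-tail estimate from $\mb{CP}\mb{1}$, Chebyshev) coincide, with the same implicit uniformity-in-$\alpha$ gloss that the paper also makes. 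Your dichotomy proof of \eqref{LBAR} (either $\PFA(T)>c|\log c|^{r+1}$, or $T\in\mbb{C}(\alpha_c)$ and integrate the delay bound over $(\Bc,\theta)$ -- where Fatou, not dominated convergence, is the right tool) is a legitimate and somewhat more direct variant of the paper's contradiction argument via the function $G_{c,r}(A)$; both reduce \eqref{LBAR} to \eqref{LBinclass}.

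The genuine gap is in the prior-averaged estimate $\sup_{T\in\mbb{C}(\alpha)}\Pb^\pi_{\Bc,\theta}(\nu<T<\nu+M_\alpha)\to0$, on which \eqref{LBinclass} and hence \eqref{LBAR} rest. Your Fubini route needs the uniform-in-$n$ tail-ratio inequality $\Pb(\nu\ge n-M_\alpha)\le e^{(\mu_\alpha+\varepsilon')M_\alpha}\Pb(\nu\ge n)$, and this does \emph{not} follow from $\mb{CP}\mb{1}$: condition \eqref{Prior} controls only the logarithmic rate $n^{-1}|\log\Pb(\nu>n)|\to\mu_\alpha$, not local increments of the tail. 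For example, take a fixed prior with $\Pb(\nu>n)=e^{-\mu n-a_n}$, $a_n=\sum_{j:\,2^j\le n}2^{j/2}$; then $a_n=O(\sqrt n)$, so $\mb{CP}\mb{1}$ holds, yet for $n$ just above $2^j$ one has $\log\bigl[\Pb(\nu\ge n-M_\alpha)/\Pb(\nu\ge n)\bigr]\ge 2^{j/2}$, so the ratio bound fails at infinitely many $n$. A (randomized) stopping time that spends its entire false-alarm budget at such a large $n$ makes your intermediate bound $e^{C_\alpha}\sum_n\Pb_\infty(T=n)\Pb(\nu\ge n-M_\alpha)$ blow up, even though the quantity you are trying to bound is small there precisely because the prior mass near such huge $n$ is negligible. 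Similarly, handling $\sum_k\pi_k^\alpha\beta_{M_\alpha,k}(\varepsilon,\Bc,\theta)$ by splitting at a large \emph{fixed} index fails when the prior depends on $\alpha$ and spreads out (e.g.\ geometric with parameter $\mu_\alpha\to0$), which is exactly the regime $\mb{CP}\mb{1}$ is designed to cover. The paper's device repairs both points at once: truncate the average at $K_\alpha=1+\lfloor\varepsilon^2|\log\alpha|\rfloor$, bound the contribution of $k>K_\alpha$ by $\Pb(\nu>K_\alpha)\to0$, and apply the fixed-$k$ estimate (whose error carries the factor $e^{(\mu+\delta)(k-1)}$) only for $k\le K_\alpha$. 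A further, minor point: the theorem assumes only $\mb{CP}\mb{1}$ and $\C_1$, so your appeal to $\mb{CP}\mb{3}$ to get $\Pb_\infty(T>k)\to1$ goes beyond the stated hypotheses; the paper argues instead with $1-\alpha/\Pb(\nu\ge k)$, i.e., with the full prior tail at $k$ rather than $\pi_k^\alpha$.
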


%%Proof
\begin{proof}
The methodology of the proof is essentially analogous to that used in the proofs of the lower bounds in Tartakovsky~\cite{TartakovskyIEEEIT2017,TartakovskyIEEEIT2018} for a single stream change
detection problem with slightly different assumptions on the
prior distribution. In particular, since the vector $(\Bc,\theta)=\tilde\theta$ is also an unknown parameter, the lower bound \eqref{LBkinclass} follows from Lemma~1 in ~\cite{TartakovskyIEEEIT2018} 
if $\mu_\alpha=\mu$ for all $\alpha$ (i.e., when $\pi_k^\alpha$ does not depend on $\alpha$) and from Lemma~3 in \cite{TartakovskyIEEEIT2018}
if $\mu_\alpha \to \mu=0$ by simple replacing $\theta$ by $\tilde\theta$.  A generalization of the proof under condition $\mb{CP} \mb{1}$ introduced in the present article has several technical 
details that are presented
below. We omit certain intermediate inequalities, which follow from the proofs given in~\cite{TartakovskyIEEEIT2017,TartakovskyIEEEIT2018}.

 For $\varepsilon\in(0,1)$ and $\delta>0$, define 
\[
N_{\alpha}=N_{\alpha}(\varepsilon,\delta, \Bc,\theta)=  \frac{(1-\varepsilon)|\log\alpha|}{I_{\Bc,\theta}+\mu+\delta}
\] 
and let $\Pi^\alpha_k = \Pb(\nu > k)$ ($\Pi_{-1}^\alpha= 1-q^\alpha$).
Using the fact  $\Pb_{k, \Bc,\theta}( T>k)=\Pb_\infty (T>k)$ and Chebyshev's inequality, as in (A.1) in \cite{TartakovskyIEEEIT2018}, we obtain
\begin{equation} \label{A1}
\begin{split}
&\inf_{ T\in\class}\Rc_{k,\Bc,\theta}^r(T) 
\\
&\ge N_{\alpha}^r\brcs{\inf_{T\in\class}\Pb_{\infty}( T>k) - \sup_{ T\in \class}\Pb_{k, \Bc,\theta}(k <  T \le k+N_{\alpha})}
\\
&\ge  N_{\alpha}^r\brcs{1- \alpha/ \Pi_{k-1}^\alpha - \sup_{ T\in \class}\Pb_{k, \Bc,\theta}(k <  T \le k+N_{\alpha})},
\end{split}
\end{equation}
where we used the inequality
\begin{equation}\label{Psup}
\inf_{T\in\class} \Pb_\infty(T > k) \ge 1- \alpha/\Pi_{k-1}^\alpha, \quad k \in \Zbb_+,
\end{equation}
which follows from the fact that for any stopping rule $T\in\class$,
\[
\alpha \ge \sum_{i=k}^\infty \pi_i^\alpha \Pb_\infty(T \le i) \ge \Pb_\infty(T \le k) \sum_{i=k}^\infty \pi_i^\alpha .
\]

Thus, to prove the lower bound \eqref{LBkinclass} it suffices to show that for arbitrary small $\varepsilon$ and $\delta$ and every fixed $k \in \Zbb_+$
\begin{equation}\label{Pksupclasszero}
 \sup_{ T\in \class}\Pb_{k, \Bc,\theta}(k <  T \le k+N_{\alpha})  \to 0 \quad \text{as}~ \alpha\to0.
\end{equation}
Introduce
\[
\begin{aligned} 
U_{N_\alpha,k}(T) &= e^{(1+\varepsilon) I_{\Bc,\theta} N_{\alpha}}\Pb_\infty\brc{k <  T \le k+ N_{\alpha}}, 
\end{aligned}
\]
By inequality (3.6) in \cite{TartakovskyVeerTVP05}, for any stopping time $T$,
\begin{equation}\label{PkTupper}
 \Pb_{k, \Bc,\theta}\brc{k <  T \le  k+ N_{\alpha}} \le  U_{N_\alpha,k}( T)  + \beta_{N_\alpha,k}(\varepsilon,\Bc,\theta).
\end{equation}
Using inequality  \eqref{Psup} and the fact that by condition \eqref{Prior}, for all sufficiently large $N_{\alpha}$ (small $\alpha$), there exists a (small) $\delta$ such that
\[
\frac{|\log \Pi^\alpha_{k-1+N_\alpha}|}{ k-1+N_{\alpha}} \le \mu + \delta,
\]
we obtain that for all sufficiently small $\alpha$
\begin{align*} 
U_{N_\alpha,k}( T)  & \le e^{(1+\varepsilon) I_{\Bc,\theta} N_{\alpha}} \Pb_\infty( T \le k+N_{\alpha}) 
\\
&\le \alpha \, e^{(1+\varepsilon) I_{\Bc,\theta}N_{\alpha}}/\Pi^\alpha_{k-1+N_\alpha} 
\\
& \le \exp\set{(1+\varepsilon) I_{\Bc,\theta} N_{\alpha}-|\log\alpha| +( k-1+N_{\alpha}) \frac{|\log\Pi^\alpha_{k-1+N_\alpha}|}{ k-1+N_{\alpha}}} 
%\\
%&\quad +( k-1+N_{\alpha}) \frac{|\log\Pi^\alpha_{k-1+N_\alpha}|}{ k-1+N_{\alpha}}\Big\} . \nonumber
\\
& \le  \exp\set{(1+\varepsilon) I_{\Bc,\theta} N_{\alpha}-|\log\alpha| +( k-1+N_{\alpha}) (\mu+\delta)} 
\\
&= \exp\set{-\frac{I_{\Bc,\theta}\varepsilon^2 + (\mu+\delta) \varepsilon}{I_{\Bc,\theta}+\mu + \delta}|\log\alpha| + (\mu+\delta) (k-1)},
%&\le \exp\set{-\frac{I_{\Bc,\theta} \, \varepsilon^2}{I_{\Bc,\theta}+\mu + \delta} |\log\alpha| + (\mu+\delta) (k-1)} := \overline{U}_{\alpha,k}(\varepsilon,\delta)
\end{align*}
where the last term is less or equal to 
\[
\exp\set{- \varepsilon^2 |\log\alpha| + (\mu+\delta) (k-1)} := \overline{U}_{\alpha,k}(\varepsilon,\delta)
\]
for all $\varepsilon \in (0,1)$. Thus,  for all $\varepsilon \in (0,1)$,
\begin{equation} \label{IneqU}
U_{N_\alpha,k}( T)   \le  \overline{U}_{\alpha,k}(\varepsilon,\delta) .
\end{equation}
Since $\overline{U}_{\alpha,k}(\varepsilon,\delta)$ does not depend on the stopping time $T$ and the value of $\overline{U}_{\alpha,k}(\varepsilon,\delta)$  goes to $0$ as $\alpha\to0$ 
for any fixed $k\in \Zbb_+$ and any $\varepsilon>0$ and $\delta>0$, it follows that 
\begin{equation}\label{IneqsupU}
\sup_{T\in\class} U_{N_\alpha,k}( T) \to 0 \quad \text{as}~ \alpha\to0 ~~ \text{for any fixed}~ k \in\Zbb_+.
\end{equation}
Also, by condition $\C_1$, $\beta_{N_\alpha,k}(\varepsilon,\Bc,\theta)\to0$ for all $\varepsilon>0$, $k\in\Zbb_+$, $\Bc\in\Pc$, $\theta\in\Theta$, and therefore, \eqref{Pksupclasszero} holds. This completes
 the proof of the lower bound \eqref{LBkinclass}.   
 
We now prove the lower bound \eqref{LBinclass}. Let $K_\alpha=K_{\alpha}(\varepsilon) = 1+\lfloor \varepsilon^2 \, |\log \alpha| \rfloor$.
Using \eqref{PkTupper} and \eqref{IneqU}, we obtain 
\begin{align}\label{ProbUpper1}
& \sup_{ T\in \class}\Pb^\pi_{\Bc,\theta}(\nu  <  T  \le \nu +  N_{\alpha}) 
= \sum_{k=0}^\infty  \pi_k^\alpha  \sup_{ T\in\class}  \Pb_{k, \Bc,\theta}\brc{k <  T \le k+ N_{\alpha}} \nonumber
 \\
 &\le \Pb(\nu > K_{\alpha}) +  \sum_{k=0}^{K_{\alpha}}  \pi_k^\alpha \beta_{N_\alpha,k}(\varepsilon,\Bc,\theta)  + 
 \max_{0 \le k \le K_{\alpha}} \sup_{ T\in\class} U_{N_\alpha,k}( T) \nonumber
 \\
 & \le \Pb(\nu > K_{\alpha}) +  \sum_{k=0}^{K_{\alpha}}  \pi_k^\alpha \beta_{N_\alpha,k}(\varepsilon,\Bc,\theta) + \overline{U}_{\alpha,K_\alpha}(\varepsilon,\delta) .
 \end{align}
 If condition \eqref{Prior} holds with $\mu>0$, then $\log \Pb(\nu > K_{\alpha}) \sim - \mu \, \varepsilon^2 |\log \alpha|\to -\infty$ as $\alpha \to0$, so the probability 
 $\Pb(\nu > K_{\alpha})$ goes to $0$ as $\alpha \to 0$ and the same is true if $\mu_\alpha>0$ for all $\alpha$ and $\mu=0$ since, in this case, 
 $\log \Pb(\nu > K_{\alpha}) \sim -\varepsilon^3 |\log \alpha| \to -\infty$ for a sufficiently small $\varepsilon$. If $\mu_\alpha=\mu=0$ for all $\alpha$,  then $\Pb(\nu > K_{\alpha})\to 0$ as well since
\[
\sum_{k= K_{\alpha}}^\infty  \pi_k^0  \xra[\alpha\to0]{} 0. 
\]
By condition $\C_1$, the second term in \eqref{ProbUpper1} also goes to zero.  Obviously, $\overline{U}_{\alpha,K_\alpha}(\varepsilon,\delta)\to 0$ as $\alpha\to0$, and therefore, 
all three terms go to zero as $\alpha\to0$ for all $\varepsilon,\delta>0$, $\Bc\in\Pc$, $\theta\in\Theta$, so that
\begin{equation}\label{Psupclasszero}
 \sup_{ T\in \class}\Pb_{\Bc,\theta}^\pi(\nu <  T \le \nu+ N_{\alpha})  \to 0 \quad \text{as}~ \alpha \to 0.
\end{equation}

Using Chebyshev's inequality, similarly to \eqref{A1} we obtain that 
\begin{equation} \label{IneqRcainf}
\begin{split}
\inf_{ T\in\class}\Rca_{\Bc,\theta}^r(T) 
\ge  N_{\alpha}^r\brcs{1- \alpha - \sup_{ T\in \class}\Pb_{\Bc,\theta}^\pi (\nu <  T \le  \nu+N_{\alpha})}.
\end{split}
\end{equation}
By \eqref{Psupclasszero} and \eqref{IneqRcainf}, asymptotically as $\alpha\to0$
\[
\inf_{ T\in\class}\Rca_{\Bc,\theta}^r(T) \ge  \brcs{\frac{(1-\varepsilon)|\log\alpha|}{I_{\Bc,\theta}+\mu+\delta}}^r (1+o(1)),
\]
where $\varepsilon$ and $\delta$ can be arbitrarily small, so that the lower bound \eqref{LBinclass}  follows.  

In order to prove the lower bound \eqref{LBAR} let us define the function
\begin{equation}\label{Gfunction}
G_{c,r}(A) = \frac{1}{A} + D_{\mu,r} c (\log A)^r.
\end{equation}
It is easily seen that $\min_{A>0}G_{c,r}(A)=G_{c,r}(A_{c,r})$, where $A_{c,r}$ satisfies the equation
\begin{equation*} %\label{eq:threshold}
r D_{\mu,r} A (\log A)^{r-1}-1/c =0,
\end{equation*}
and goes to infinity as $c\to0$ so that $\log A_{c,r}\sim |\log c|$ and that
\[
\lim_{c\to 0} \frac{G_{c,r}(A_{c,r})}{D_{\mu,r} \, c \, |\log c|^r} =1.
\]
Thus, it suffices to prove that
\begin{equation}\label{needtoprove}
\frac{\displaystyle\inf_{T\ge 0} \rho_{\pb, W}^{c,r}(T)}{G_{c,r}(A_{c,r})} \ge 1 +o(1) \quad \text{as}~ c \to 0.
\end{equation}
If \eqref{needtoprove} is wrong, then there is a stopping rule $T_{c}$ such that
\begin{equation} \label{assumptionTc}
\frac{\rho_{\pb, W}^{c,r}(T_c)}{G_{c,r}(A_{c,r})} < 1 +o(1) \quad \text{as}~ c \to 0 .
\end{equation}
Let $\alpha_c=\PFA(T_c)$. Since 
\[
\alpha_c\le \rho_{\pb, W}^{c,r}(T_c) < G_{c,r}(A_{c,r})(1+o(1))\to 0 \quad \text{as}~~c\to0,
\]
it follows that $\alpha_c \to 0$ as $c\to 0$. Using inequality \eqref{LBinclass}, we obtain that as $\alpha_c \to 0$
\[
\Rca_{\pb,W}^r(T_c)  \ge D_{\mu,r}  |\log \alpha_c|^r (1+o(1)),
\]
and hence, as $c \to0$,
\begin{align*}
\rho_{\pb, W}^{c,r}(T_c)& = \alpha_c + c\, (1-\alpha_c) \Rca_{\pb,W}^r(T_c) 
\\
& \ge \alpha_c + c \, D_{\mu,r}  |\log \alpha_c|^r (1+o(1)) .
\end{align*}
Thus,
\[
\frac{\rho_{\pb, W}^{c,r}(T_c)}{G_{c,r}(A_{c,r})} \ge \frac{G_{c,r}(1/\alpha_c) +c \, D_{\mu,r} |\log \alpha_c|^r o(1)}{\min_{A>0} G_{c,r}(A)} \ge 1+o(1),
\]
which contradicts~\eqref{assumptionTc}. Hence, \eqref{needtoprove} follows and the proof of \eqref{LBAR} is complete.
\end{proof}

%%____________________________________________________________________________________
\section{First-order asymptotic optimality of the detection procedures $T_A^W$ and $\wtT_A^W$ in class $\class$}\label{sec:AODMS} 

We now proceed with establishing asymptotic optimality properties of the double-mixture detection procedures $T_A^W$ and $\wtT_A^W$ in class $\class$ as $\alpha\to0$.

%___________________________________-
\subsection{First-order asymptotic optimality of the procedure $T_A^W$}\label{ssec:AOTWA}

The following lemma provides the upper bound for the PFA of the double-mixture procedure $T_A^W$ defined in \eqref{DMS_def}.

%%Lemma
\begin{lemma}\label{Lem:PFADMS} 
For all  $A>q/(1-q)$ and any prior distribution of the change point, the PFA of the procedure  $T_{A}^W$ satisfies the inequality 
\begin{equation}\label{PFADMSineq}
\PFA(T_A^W) \le 1/(1+A),
\end{equation} 
so that if $A=A_\alpha=(1-\alpha)/\alpha$, then $\PFA(T_{A_\alpha}^W) \le \alpha$ for $0<\alpha < 1-q$, i.e., $T_{A_\alpha}^W \in \class$. 
\end{lemma}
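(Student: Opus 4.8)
The plan is to prove the PFA bound by a change-of-measure / martingale argument built around the Shiryaev-type statistic. Recall that by \eqref{DMS_stat} the statistic $S_{\pb,W}^\pi(n)$ has the form $\frac{1}{\Pb(\nu\ge n)}[q\Lambda_{\pb,W}(0,n)+\sum_{k=0}^{n-1}\pi_k\Lambda_{\pb,W}(k,n)]$, where $\Lambda_{\pb,W}(k,n)$ is the double-mixture LR between $\Hyp_{k,\Bc}$-averaged-over-$(\Bc,\theta)$ and $\Hyp_\infty$. The key observation is that the ``Shiryaev ratio'' $S_{\pb,W}^\pi(n)/(1+S_{\pb,W}^\pi(n))$ relates to the posterior probability of the event $\{\nu<n\}$ under the mixture model, so that $S_{\pb,W}^\pi(n)\ge A$ forces the posterior probability of a false alarm to be small. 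I would first rewrite the PFA: since $\PFA(T_A^W)=\sum_{k=0}^\infty\pi_k\Pb_\infty(T_A^W\le k)$ and the event $\{T_A^W\le k\}$ is $\Fc_k$-measurable, I can write $\PFA(T_A^W)=\Eb^\pi[\Ind{T_A^W\le\nu}]$ where $\Eb^\pi$ is expectation under the full mixture measure (mixing over $\nu$, $\Bc$, $\theta$).

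The core step is the following identity: on $\{T_A^W=n\}$, the conditional probability (under the mixture measure) of $\{\nu\ge n\}$ given $\Fc_n$ equals $1/(1+S_{\pb,W}^\pi(n))$, because $S_{\pb,W}^\pi(n)$ is precisely the ratio of the ``change already occurred by time $n$'' weight to the ``no change yet'' weight in the posterior. Concretely, the posterior odds of $\{\nu\le n-1\}$ against $\{\nu\ge n\}$ after observing $\Xb^n$ is $[\,q\Lambda_{\pb,W}(0,n)+\sum_{k=0}^{n-1}\pi_k\Lambda_{\pb,W}(k,n)\,]/\Pb(\nu\ge n)=S_{\pb,W}^\pi(n)$ (using that the no-change likelihood cancels from numerator and denominator and that under $\Hyp_\infty$ the LR is $1$). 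Hence $\Pb^\pi(\nu\le n-1\mid\Fc_n)=S_{\pb,W}^\pi(n)/(1+S_{\pb,W}^\pi(n))$ and $\Pb^\pi(\nu\ge n\mid\Fc_n)=1/(1+S_{\pb,W}^\pi(n))$. Then I would compute
\begin{align*}
\PFA(T_A^W)&=\Eb^\pi\brcs{\Ind{T_A^W\le\nu}}=\sum_{n\ge 1}\Eb^\pi\brcs{\Ind{T_A^W=n}\,\Pb^\pi(\nu\ge n\mid\Fc_n)}\\
&=\sum_{n\ge 1}\Eb^\pi\brcs{\Ind{T_A^W=n}\,\frac{1}{1+S_{\pb,W}^\pi(n)}}\le\sum_{n\ge 1}\Eb^\pi\brcs{\Ind{T_A^W=n}\,\frac{1}{1+A}}\le\frac{1}{1+A},
\end{align*}
where the first inequality uses that $S_{\pb,W}^\pi(T_A^W)\ge A$ by definition of the stopping rule (the condition $A>q/(1-q)=S_{\pb,W}^\pi(0)$ guarantees $T_A^W\ge 1$ so the stopping value is genuinely at or above $A$), and the second uses $\sum_{n\ge1}\Pb^\pi(T_A^W=n)\le 1$. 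Setting $A_\alpha=(1-\alpha)/\alpha$ gives $1/(1+A_\alpha)=\alpha$, and $A_\alpha>q/(1-q)\Leftrightarrow(1-\alpha)/\alpha>q/(1-q)\Leftrightarrow\alpha<1-q$, which yields the final claim $T_{A_\alpha}^W\in\class$.

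The main obstacle — and the step requiring care — is justifying the posterior-odds identity $\Pb^\pi(\nu\ge n\mid\Fc_n)=1/(1+S_{\pb,W}^\pi(n))$ cleanly in the present very general non-i.i.d.\ setting with a change point allowed to be negative (the atom $q=\Pb(\nu\le-1)$) and with the extra mixing over $(\Bc,\theta)$. One must check that the likelihood-based Bayes formula still produces exactly the normalization $1/\Pb(\nu\ge n)$ appearing in \eqref{DMS_stat}, and that the conditioning events $\{\nu\ge n\}$ versus $\{\nu\le n-1\}$ align with the ``last pre-change observation is $\Xb_k$ when $\nu=k$'' convention used throughout (so that $\Pb_{k,\Bc,\theta}(T\le k)=\Pb_\infty(T\le k)$, which was already noted after \eqref{PFAdef}). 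Once that bookkeeping is in place, the martingale-free summation argument above is routine. An alternative, if one prefers to avoid posterior computations, is to verify directly that $\{(1+S_{\pb,W}^\pi(n))^{-1}\Ind{T_A^W>n}\cdot(\text{something})\}$ forms a supermartingale under $\Pb^\pi$, but the posterior-identity route is the most transparent and is exactly the single-stream argument of Tartakovsky~\cite{TartakovskyIEEEIT2018} with $\theta$ replaced by the pair $(\Bc,\theta)$.
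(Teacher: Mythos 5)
Your proposal is correct and follows essentially the same route as the paper: derive the posterior identity $\Pb(\nu\ge n\mid\Fc_n)=1/(1+S_{\pb,W}^\pi(n))$ by the Bayes rule under the full mixture measure (which is exactly the paper's display \eqref{Probnu>n}), then bound $\PFA(T_A^W)=\Eb^\pi[(1+S_{\pb,W}^\pi(T_A^W))^{-1};T_A^W<\infty]\le 1/(1+A)$ using $S_{\pb,W}^\pi(T_A^W)\ge A$ at stopping, and finish with the algebra for $A_\alpha=(1-\alpha)/\alpha$. The only cosmetic difference is that you sum over $\{T_A^W=n\}$ explicitly where the paper conditions at the stopping time directly; the substance is identical.
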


%%Proof
\proof
Using the Bayes rule and the fact that  $\Lambda_{\pb,W}(k,n)=1$ for $k \ge n$, we obtain
\begin{align*}
& \Pb(\nu=k|\Fc_n)  = 
\\
& \frac{\sum_{\Bc \in \Pc} p_\Bc \pi_k \prod_{t=1}^k g(\Xb(t)|\Xb^{t-1}) \int_\Theta\prod_{t=k+1}^{n} f_{\Bc,\theta}(\Xb(t)|\Xb^{t-1})  \drm W(\theta)}{\sum_{j=-\infty}^\infty 
\sum_{\Bc \in \Pc} p_\Bc  \pi_j \prod_{t=1}^j g(\Xb_t|\Xb^{t-1})  \int_\Theta \prod_{t=j+1}^{n} f_{\Bc, \theta} (\Xb(t)|\Xb^{t-1}) \drm W(\theta) }
%\\
%& = \frac{\pi_k \sum_{\Bc \in \Pc} p_\Bc \Lambda_{\pb,W}(k,n)}{q \sum_{\Bc \in \Pc} p_\Bc\Lambda_{\pb,W}(0,n) + \sum_{j=0}^{n-1} \pi_j \sum_{\Bc \in \Pc} p_\Bc\Lambda_{\pb,W}(j,n) + \Pb(\nu\ge n)}
\\
& =  \frac{\pi_k \Lambda_{\pb,W}(k,n)}{q \Lambda_{\pb,W}(0,n) + \sum_{j=0}^{n-1} \pi_j\Lambda_{\pb,W}(j,n) + \Pb(\nu\ge n)}.
\end{align*}
It follows that 
\begin{align}  \label{Probnu>n}
\Pb(\nu \ge n | \Fc_n) &=  \frac{\sum_{k=n}^\infty  \pi_k \Lambda_{\pb,W}(k,n)}{q \Lambda_{\pb,W}(0,n) + \sum_{j=0}^{n-1} \pi_j \Lambda_{\pb,W}(j,n) + \Pb(\nu\ge n)}
\nonumber
\\
& =  \frac{\Pb(\nu\ge n)}{q\Lambda_{\pb,W}(0,n) + \sum_{j=0}^{n-1} \pi_j \Lambda_{\pb,W}(j,n) + \Pb(\nu\ge n)}
\nonumber
\\
& = \frac{1}{S_{\pb,W}^\pi(n) + 1}.
\end{align}
By the definition of the stopping time $T_A^W$, $S_{\pb,W}^\pi(T_A^W) \ge A$ on $\{T_A^W<\infty\}$ and $\PFA(T_A^W) = \Eb^\pi_{\Bc,\theta} [\Pb(T_A^W \le \nu | \Fc_{T_A^W}); T_A^W < \infty]$, so that
\[
\PFA(T_A^W) = \Eb^\pi_{\Bc,\theta}\brcs{(1+ S_{\pb,W}^\pi(T_A^W))^{-1}; T_A^W< \infty} \le 1/(1+A),
\]
which completes the proof of inequality \eqref{PFADMSineq}. 
\endproof

The following proposition, whose proof is given in the Appendix, provides asymptotic operating characteristics of the double-mixture procedure $T_A^W$ for large values of threshold $A$. 

%%Proposition
\begin{proposition}\label{Lem:AOCDMS} 
Let the prior distribution of the change point belong to class $\Cb(\mu)$.   Let $r\ge 1$ and  assume that for some $0<I_{\Bc,\theta}<\infty$, $\Bc\in\Pc$, $\theta\in\Theta$, right-tail and left-tail conditions
$\C_{1}$ and $\C_{2}$ are satisfied.  

\noindent {\bf (i)} If condition $\mb{CP} {\mb 3}$ holds, then, for all $0<m \le r$ and all $\Bc\in\Pc$, $\theta\in\Theta$ as $A\to \infty$
\begin{equation} \label{MomentskDMS}
\lim_{A\to \infty} \frac{\Rc^m_{k,\Bc,\theta}(T_A^W)}{|\log A|^m} =  \frac{1}{(I_{\Bc,\theta}+\mu)^m} \quad \text{for all}~ k \in \Zbb_+.
\end{equation}

\noindent {\bf (ii)} If condition $\mb{CP} {\mb 2}$ holds, then, for all $0<m \le r$ and all $\Bc\in\Pc$, $\theta\in\Theta$ as $A\to \infty$
\begin{equation} \label{MomentsDMS}
\lim_{A\to \infty} \frac{\Rca^m_{\Bc,\theta}(T_A^W)}{|\log A|^m} =  \frac{1}{(I_{\Bc,\theta}+\mu)^m} .
\end{equation}
\end{proposition}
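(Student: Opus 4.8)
Both limits follow by squeezing each moment of $T_A^W$ between a matching lower and upper bound. Since the prior is allowed to depend on $\alpha$, I couple $A$ to $\alpha$ throughout via $A=A_\alpha=(1-\alpha)/\alpha$, so that $A\to\infty$ iff $\alpha\to0$ and $|\log A|\sim|\log\alpha|$. The lower bounds are immediate from what is already available: by Lemma~\ref{Lem:PFADMS}, $\PFA(T_{A_\alpha}^W)\le\alpha$, i.e.\ $T_{A_\alpha}^W\in\class$, whence $\Rc^m_{k,\Bc,\theta}(T_{A_\alpha}^W)\ge\inf_{T\in\class}\Rc^m_{k,\Bc,\theta}(T)$ and likewise for $\Rca^m_{\Bc,\theta}$; applying Theorem~\ref{Th:LB} with $r$ replaced by $m$ (legitimate, as \eqref{LBkinclass}--\eqref{LBinclass} hold for every positive moment) gives
\[
\liminf_{A\to\infty}\frac{\Rc^m_{k,\Bc,\theta}(T_A^W)}{|\log A|^m}\ge\frac{1}{(I_{\Bc,\theta}+\mu)^m},\qquad\liminf_{A\to\infty}\frac{\Rca^m_{\Bc,\theta}(T_A^W)}{|\log A|^m}\ge\frac{1}{(I_{\Bc,\theta}+\mu)^m}.
\]
So the whole content is in the reverse inequalities, which form a standard $r$-complete convergence argument driven by condition $\C_2$.

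For the upper bounds, fix $\varepsilon\in(0,1)$ and take $\delta=\delta_\varepsilon>0$ as in $\C_2$, so $W(\Gamma_{\delta,\theta})>0$. Keeping in the representation \eqref{DMS_stat} only the summand with index $k$, restricting the $\Theta$-integral to $\Gamma_{\delta,\theta}$ and the sum over subsets to the single subset $\Bc$, and applying Jensen's inequality to the probability measure $W(\cdot)/W(\Gamma_{\delta,\theta})$ on $\Gamma_{\delta,\theta}$, we obtain for $0\le k<n$
\[
\log S_{\pb,W}^\pi(n)\ge\log\pi_k^\alpha+\log p_\Bc+\log W(\Gamma_{\delta,\theta})+\bigl|\log\Pb(\nu\ge n)\bigr|+\inf_{\vartheta\in\Gamma_{\delta,\theta}}\lambda_{\Bc,\vartheta}(k,n).
\]
By $\mb{CP}\mb{1}$ (and $\mu_\alpha\to\mu$) we have $|\log\Pb(\nu\ge n)|\ge(\mu-\varepsilon)n$ for all large $n$ and all small $\alpha$, and on the event $\{\tfrac1n\inf_{\vartheta\in\Gamma_{\delta,\theta}}\lambda_{\Bc,\vartheta}(k,k+n)\ge I_{\Bc,\theta}-\varepsilon\}$ the last term above is $\ge(I_{\Bc,\theta}-\varepsilon)n$. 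Hence, setting $C_\varepsilon=|\log p_\Bc|+|\log W(\Gamma_{\delta,\theta})|$ and
\[
N_A=N_A(k)=1+\Bigl\lfloor\frac{|\log A|+|\log\pi_k^\alpha|+C_\varepsilon}{I_{\Bc,\theta}+\mu-2\varepsilon}\Bigr\rfloor,
\]
one derives the key bound: for every $n\ge N_A$,
\[
\Pb_{k,\Bc,\theta}\bigl(T_A^W-k>n\bigr)\le\Pb_{k,\Bc,\theta}\Bigl(\tfrac1n\inf_{\vartheta\in\Gamma_{\delta,\theta}}\lambda_{\Bc,\vartheta}(k,k+n)<I_{\Bc,\theta}-\varepsilon\Bigr).
\]

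From the key bound the two parts follow by summation. In part (i), fix $k$; by $\mb{CP}\mb{3}$, $|\log\pi_k^\alpha|=o(|\log\alpha|)=o(|\log A|)$, so $N_A(k)\le(1+o(1))|\log A|/(I_{\Bc,\theta}+\mu-2\varepsilon)$, and the elementary bound $\EV[X^m]\le N^m+c_m\sum_{n\ge N}n^{m-1}\Pb(X>n)$ ($c_m$ depending only on $m$), together with $\Upsilon_m(\varepsilon,\Bc,\theta)\le\Upsilon_r(\varepsilon,\Bc,\theta)<\infty$ for $m\le r$ (because $n^{m-1}\le n^{r-1}$, $n\ge1$), gives $\EV_{k,\Bc,\theta}[(T_A^W-k)^m]\le N_A^m+c_m\Upsilon_m(\varepsilon,\Bc,\theta)$; dividing by $\Pb_{k,\Bc,\theta}(T_A^W>k)=\Pb_\infty(T_A^W>k)$, which tends to $1$ (Markov's inequality, $\EV_\infty[S_{\pb,W}^\pi(n)]=\Pb(\nu\le n-1)/\Pb(\nu\ge n)$, and $\mb{CP}\mb{3}$), then letting $A\to\infty$ and $\varepsilon\to0$ yields the upper bound matching \eqref{MomentskDMS}. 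In part (ii), write $\Rca^m_{\Bc,\theta}(T_A^W)=[1-\PFA(T_A^W)]^{-1}\sum_{k\ge0}\pi_k^\alpha\EV_{k,\Bc,\theta}[(T_A^W-k)^m;T_A^W>k]$, bound each summand by $N_A(k)^m+c_m\Upsilon_m(\varepsilon,\Bc,\theta)$, use $N_A(k)^m\le 2^{m-1}(I_{\Bc,\theta}+\mu-2\varepsilon)^{-m}\bigl((|\log A|+C_\varepsilon)^m+|\log\pi_k^\alpha|^m\bigr)$ together with $\sum_k\pi_k^\alpha|\log\pi_k^\alpha|^m=o(|\log A|^m)$ (condition $\mb{CP}\mb{2}$ for exponent $r$, interpolated down to $m\le r$; when $\mu>0$ the sum is even bounded), and $\PFA(T_A^W)\to0$; this gives the upper bound matching \eqref{MomentsDMS}.

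The main obstacle is the key bound itself, for $n\ge N_A$: establishing it requires three ingredients simultaneously --- (a) lowering $S_{\pb,W}^\pi(n)$ through a single slice $p_\Bc\,W(\Gamma_{\delta,\theta})$, which is where the positivity $p_\Bc>0$ and the local mass $W(\Gamma_{\delta,\theta})>0$ from $\C_2$ enter; (b) the uniform-in-$n$ prior-tail estimate $|\log\Pb(\nu\ge n)|\ge(\mu-\varepsilon)n$ extracted from $\mb{CP}\mb{1}$; and (c) careful bookkeeping so that the $\alpha$-dependent term $\log\pi_k^\alpha$ stays $o(|\log A|)$ --- via $\mb{CP}\mb{3}$ for a fixed $k$ in part (i), and via $\mb{CP}\mb{2}$ after $\pi^\alpha$-averaging in part (ii). Once the key bound is available, everything else is the routine $\sum_n n^{m-1}\Pb(\cdot)$ estimate, finite thanks to $\Upsilon_r(\varepsilon,\Bc,\theta)<\infty$, together with the elementary fact $\Pb_\infty(T_A^W>k)\to1$.
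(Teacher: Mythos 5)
Your proposal is correct and follows essentially the same route as the paper's own proof: lower bounds by feeding $T_{A_\alpha}^W\in\class$ (Lemma~\ref{Lem:PFADMS}) into Theorem~\ref{Th:LB}, and upper bounds by bounding $\log S_{\pb,W}^\pi$ from below through the single slice $\pi_k^\alpha\,p_\Bc\,W(\Gamma_{\delta,\theta})$ plus the prior-tail term from $\mb{CP}\mb{1}$, defining $N_A\approx\log(A/\pi_k^\alpha)/(I_{\Bc,\theta}+\mu-\varepsilon)$, and summing tails via the Lemma~\ref{LemmaA1}-type bound with $\Upsilon_r(\varepsilon_1,\Bc,\theta)<\infty$ under $\C_2$, invoking $\mb{CP}\mb{3}$ for (i) and $\mb{CP}\mb{2}$ for (ii). The only deviations are cosmetic (handling $0<m<r$ directly instead of via Lyapunov's inequality, where for $m<1$ you should replace $2^{m-1}(a^m+b^m)$ by $a^m+b^m$, and proving $\Pb_\infty(T_A^W>k)\to1$ via $\Eb_\infty S_{\pb,W}^\pi(n)$ rather than the paper's bound $\Pb_\infty(T_A^W>k)\ge1-[(A+1)\Pi^A_{k-1}]^{-1}$).
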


The following theorem shows that the double-mixture detection procedure $T_A^W$ attains the asymptotic lower bounds \eqref{LBkinclass}--\eqref{LBinclass} in Theorem~\ref{Th:LB} 
for the moments of the detection delay under conditions postulated in Proposition~\ref{Lem:AOCDMS}, 
being therefore first-order asymptotically optimal in class $\class$ as $\alpha\to0$ in the general non-i.i.d.\ case.   

%%Theorem
\begin{theorem}\label{Th:FOAODMS} 
Let the prior distribution of the change point belong to class $\Cb(\mu)$.   Let $r\ge 1$  and assume that for some $0<I_{\Bc,\theta}<\infty$, $\Bc\in\Pc$, $\theta\in\Theta$, right-tail and left-tail conditions
$\C_{1}$ and $\C_{2}$ are satisfied. Assume that  $A=A_\alpha$ is so selected that $\PFA(T_{A_\alpha}^W) \le \alpha$ and $\log A_\alpha\sim |\log\alpha|$ as $\alpha\to0$, 
in particular $A_\alpha=(1-\alpha)/\alpha$.

\noindent {\bf (i)} If condition $\mb{CP} {\mb 3}$ holds, then $T_{A_\alpha}^W$ is first-order asymptotically optimal as $\alpha\to0$ in class $\class$, 
minimizing conditional moments of the detection delay $\Rc^m_{k,\Bc,\theta}(T)$  up to order $r$, i.e., for all $0<m \le r$ and all $\Bc\in\Pc$, $\theta\in\Theta$ as $\alpha\to0$ 
\begin{equation}\label{FOAODMSmomentsk}
 \inf_{T \in \class} \Rc^m_{k,\Bc,\theta}(T)  \sim \brc{\frac{|\log\alpha|}{I_{\Bc,\theta}+\mu}}^m \sim \Rc^m_{k,\Bc,\theta}(T_{A_\alpha}^W) \quad \text{for all}~ k \in \Zbb_+.
 \end{equation}

\noindent {\bf (ii)} If condition $\mb{CP} {\mb 2}$ holds, then $T_{A_\alpha}^W$ is first-order asymptotically optimal as $\alpha\to0$ in class $\class$, 
minimizing moments of the detection delay $\Rca^m_{\Bc,\theta}(T)$  up to order $r$, i.e., for all $0<m \le r$ and all $\Bc\in\Pc$, $\theta\in\Theta$ as $\alpha\to0$ 
\begin{equation}\label{FOAODMSmoments}
 \inf_{T \in \class} \Rca^m_{\Bc,\theta}(T)    \sim \brc{\frac{|\log\alpha|}{I_{\Bc,\theta}+\mu}}^m \sim \Rca^m_{\Bc,\theta}(T_{A_\alpha}^W) .
\end{equation}
\end{theorem}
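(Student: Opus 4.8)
The plan is to derive \eqref{FOAODMSmomentsk} and \eqref{FOAODMSmoments} by sandwiching the optimal conditional (resp.\ average) $m$-th moment of the detection delay between the asymptotic lower bound of Theorem~\ref{Th:LB} and the asymptotic performance of the concrete procedure $T_{A_\alpha}^W$ supplied by Proposition~\ref{Lem:AOCDMS}. Since the substantive analytic work is already contained in those two results together with the false-alarm bound of Lemma~\ref{Lem:PFADMS}, what remains is essentially bookkeeping.

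First I would record admissibility and calibration. By Lemma~\ref{Lem:PFADMS}, the choice $A_\alpha=(1-\alpha)/\alpha$ gives $\PFA(T_{A_\alpha}^W)\le 1/(1+A_\alpha)=\alpha$, so $T_{A_\alpha}^W\in\class$; more generally, $T_{A_\alpha}^W\in\class$ is part of the hypothesis. Moreover $\log A_\alpha=\log(1-\alpha)+|\log\alpha|\sim|\log\alpha|$ as $\alpha\to0$ (again, in general this is assumed), which is exactly the calibration that converts the threshold-indexed limits \eqref{MomentskDMS}--\eqref{MomentsDMS} of Proposition~\ref{Lem:AOCDMS} into the $\alpha$-indexed statements
\[
\Rc^m_{k,\Bc,\theta}(T_{A_\alpha}^W)\sim\brc{\frac{|\log\alpha|}{I_{\Bc,\theta}+\mu}}^m, \qquad \Rca^m_{\Bc,\theta}(T_{A_\alpha}^W)\sim\brc{\frac{|\log\alpha|}{I_{\Bc,\theta}+\mu}}^m \quad\text{as }\alpha\to0 .
\]

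Then, for part (i), fix $0<m\le r$, $\Bc\in\Pc$, $\theta\in\Theta$, $k\in\Zbb_+$. Applying the lower bound \eqref{LBkinclass} of Theorem~\ref{Th:LB} with the exponent $m$ in place of $r$ --- it is valid for every positive moment, and the hypotheses $\pi^\alpha\in\Cb(\mu)$ and $\C_1$ are in force --- yields $\inf_{T\in\class}\Rc^m_{k,\Bc,\theta}(T)\ge(|\log\alpha|/(I_{\Bc,\theta}+\mu))^m(1+o(1))$. Since $T_{A_\alpha}^W\in\class$, the reverse inequality $\inf_{T\in\class}\Rc^m_{k,\Bc,\theta}(T)\le\Rc^m_{k,\Bc,\theta}(T_{A_\alpha}^W)$ is trivial; combining it with the displayed asymptotics for $\Rc^m_{k,\Bc,\theta}(T_{A_\alpha}^W)$ pins both quantities to $(|\log\alpha|/(I_{\Bc,\theta}+\mu))^m(1+o(1))$, which is \eqref{FOAODMSmomentsk}. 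Here condition $\mb{CP}\,\mb{3}$ enters only to license the use of Proposition~\ref{Lem:AOCDMS}(i). Part (ii) is word for word the same argument with $\Rc^m_{k,\Bc,\theta}$ replaced by $\Rca^m_{\Bc,\theta}$, using \eqref{LBinclass} of Theorem~\ref{Th:LB} and Proposition~\ref{Lem:AOCDMS}(ii), the latter requiring $\mb{CP}\,\mb{2}$ in place of $\mb{CP}\,\mb{3}$.

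The theorem itself poses no real obstacle; the difficulty is entirely upstream, in Proposition~\ref{Lem:AOCDMS} (proved in the Appendix), where one must show that the delay of $T_A^W$ does not asymptotically exceed $(|\log A|/(I_{\Bc,\theta}+\mu))^m(1+o(1))$ --- this is where the right-tail condition $\C_1$ controls the mixed LLR near the true change-point, the left-tail condition $\C_2$ (an $r$-complete-convergence-type bound on the mixed LLR) upgrades convergence in probability to convergence of the $m$-th moment, and the prior-tail conditions $\mb{CP}\,\mb{2}$/$\mb{CP}\,\mb{3}$ ensure the $|\log\alpha|$ scale is not corrupted by the prior. Within the present proof the only point requiring care is the mismatch that the lower bound is an infimum over $\class$ while the upper bound concerns a single stopping rule, and this is dispatched by the admissibility check $T_{A_\alpha}^W\in\class$ from Lemma~\ref{Lem:PFADMS}.
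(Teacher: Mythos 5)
Your proposal is correct and follows essentially the same route as the paper: the theorem is obtained by combining the asymptotic lower bounds \eqref{LBkinclass}--\eqref{LBinclass} of Theorem~\ref{Th:LB} with the asymptotic delay approximations \eqref{MomentskDMS}--\eqref{MomentsDMS} of Proposition~\ref{Lem:AOCDMS} under the calibration $\log A_\alpha\sim|\log\alpha|$, with Lemma~\ref{Lem:PFADMS} securing $T_{A_\alpha}^W\in\class$. Your only addition is to spell out the sandwich step (trivial upper bound via membership in $\class$) that the paper leaves implicit, which is fine.
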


%%Proof
\proof
The proof of part (i). If threshold $A_\alpha$ is so selected that  $\log A_\alpha\sim |\log\alpha|$ as $\alpha\to0$, it follows from Proposition~\ref{Lem:AOCDMS}(i) that for  $0<m \le r$ and all 
$\Bc\in\Pc$, $\theta\in\Theta$ as $\alpha\to0$
\begin{align*}
\Rc^m_{k,\Bc,\theta}(T_{A_\alpha}^W)  & \sim \brc{\frac{|\log \alpha|}{I_{\Bc,\theta}+\mu}}^m \quad \text{for all}~ k \in \Zbb_+ .
\end{align*}
This asymptotic approximation shows that the asymptotic lower bound \eqref{LBkinclass} in Theorem~\ref{Th:LB}  is attained by $T^W_{A_\alpha}$, proving the assertion (i) of the theorem.

The proof of part (ii). If threshold $A_\alpha$ is so selected that  $\log A_\alpha\sim |\log\alpha|$ as $\alpha\to0$, it follows from Proposition~\ref{Lem:AOCDMS}(ii) that for  $0<m \le r$ and all 
$\Bc\in\Pc$, $\theta\in\Theta$ as $\alpha\to0$
\begin{align*}
\Rca^m_{\Bc,\theta}(T_{A_\alpha})  & \sim \brc{\frac{|\log \alpha|}{I_{\Bc,\theta}+\mu}}^m .
\end{align*}
This asymptotic approximation along with the asymptotic lower bound  \eqref{LBinclass} in Theorem~\ref{Th:LB}  proves the assertion (ii) of the theorem.
\endproof

%%%___________________________________________________________
\subsection{Asymptotic optimality of the double-mixture procedure $\wtT_A^W$}\label{ssec:AODMSR}

Consider now the double-mixture detection procedure $\wtT_A^W$ defined in \eqref{DMSR_stat} and \eqref{DMSR_def}.

Note that 
\[
\Eb_\infty[R_{\pb,W}(n)| \Fc_{n-1}] =\sum_{\Bc\in\Pc} p_\Bc\int_\Theta \mrm{d} W(\theta) +\sum_{\Bc\in\Pc} p_\Bc \int_\Theta R_{\Bc,\theta}(n-1) \,  \mrm{d} W(\theta) = 1+ R_{\pb,W}(n-1), 
\]
and hence,  $\{R_{\pb,W}(n)\}_{n\ge 1}$ is a $(\Pb_\infty,\Fc_n)-$submartingale  with mean  $\Eb_\infty [R_{\pb,W}(n)=\omega+n$.  Therefore, by Doob's submartingale inequality, 
\begin{equation}\label{PFADoobineqDM}
 \Pb_\infty(\wtT_A^W \le k) \le (\omega+k)/A, \quad k=1,2,\dots ,
 \end{equation}
which implies the following lemma that establishes an upper bound for the PFA of the procedure $\wtT_A^W$.

%%Lemma
\begin{lemma}\label{Lem:PFADSR} 
For all $A >0$ and any prior distribution of $\nu$ with finite mean $\bar\nu=\sum_{k=1}^\infty k \pi_k$, the PFA of the procedure $\wtT_A^W$ satisfies the inequality 
\begin{equation}\label{PFADSR}
\PFA(\wtT_A^W)  \le \frac{\omega b + \bar{\nu}} {A} ,
\end{equation}
where $b=\sum_{k=1}^\infty \pi_k$, so that if $A= A_\alpha =   (\omega b+\bar\nu)/\alpha$,
then $\PFA(\wtT_{A_\alpha}^W) \le \alpha$, i.e., $\wtT_{A_\alpha}^W \in \class$.  
\end{lemma}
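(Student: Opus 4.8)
The plan is to read off the bound directly from the submartingale maximal inequality \eqref{PFADoobineqDM} established just above, together with the representation \eqref{PFAdef} of the weighted probability of false alarm. By \eqref{PFAdef}, $\PFA(\wtT_A^W)=\sum_{k=0}^\infty \pi_k\,\Pb_\infty(\wtT_A^W\le k)$, and since the infimum in \eqref{DMSR_def} runs over $n\ge1$ we have $\wtT_A^W\ge1$, so the $k=0$ term vanishes and it suffices to control $\sum_{k=1}^\infty \pi_k\,\Pb_\infty(\wtT_A^W\le k)$.

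First I would recall the mechanism behind \eqref{PFADoobineqDM}: the computation preceding the lemma shows that $\{R_{\pb,W}(n)\}_{n\ge1}$ is a nonnegative $(\Pb_\infty,\Fc_n)$-submartingale with $\Eb_\infty R_{\pb,W}(n)=\omega+n$; since $\{\wtT_A^W\le k\}=\{\max_{1\le n\le k}R_{\pb,W}(n)\ge A\}$, Doob's submartingale maximal inequality gives $\Pb_\infty(\wtT_A^W\le k)\le(\omega+k)/A$ for every $k\ge1$. Substituting this termwise and summing,
\[
\PFA(\wtT_A^W)=\sum_{k=1}^\infty \pi_k\,\Pb_\infty(\wtT_A^W\le k)\le\frac1A\sum_{k=1}^\infty \pi_k(\omega+k)=\frac1A\Bl(\omega\sum_{k=1}^\infty\pi_k+\sum_{k=1}^\infty k\pi_k\Br)=\frac{\omega b+\bar\nu}{A},
\]
where the rearrangement is legitimate because all terms are nonnegative and, by the hypothesis $\bar\nu<\infty$ (and $b=\sum_{k\ge1}\pi_k\le1$), the right-hand side is finite. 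Choosing $A=A_\alpha=(\omega b+\bar\nu)/\alpha$ then yields $\PFA(\wtT_{A_\alpha}^W)\le\alpha$, i.e.\ $\wtT_{A_\alpha}^W\in\class$.

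There is no real obstacle here; the only points that deserve a line of explanation are (a) that the $k=0$ contribution to the PFA is zero because $\wtT_A^W\ge1$, and (b) that the finite-mean assumption on $\nu$ is exactly what makes the bound $(\omega b+\bar\nu)/A$ finite and hence the threshold choice $A_\alpha$ meaningful. When $\omega=0$ the statement collapses to $\PFA(\wtT_A^W)\le\bar\nu/A$, and every step above goes through verbatim.
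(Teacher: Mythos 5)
Your proof is correct and follows essentially the same route as the paper: the paper derives $\Pb_\infty(\wtT_A^W\le k)\le(\omega+k)/A$ from the $(\Pb_\infty,\Fc_n)$-submartingale property of $R_{\pb,W}(n)$ via Doob's inequality (its display \eqref{PFADoobineqDM}) and then obtains \eqref{PFADSR} by weighting with $\pi_k$ and summing, exactly as you do. Your explicit remarks about the vanishing $k=0$ term and the role of the finite-mean assumption are fine elaborations of the step the paper leaves implicit.
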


As before, the prior distribution may depend on the PFA $\alpha$, so the mean $\bar\nu_\alpha = \sum_{k=0}^\infty k \, \pi_k^\alpha$ depends on $\alpha$. We also suppose that in general the head-start  
$\omega=\omega_\alpha$ depends on $\alpha$ and may go to infinity as $\alpha\to0$. Throughout this subsection we assume  that $\omega_\alpha\to\infty$ and $\bar\nu_\alpha\to\infty$ 
with such rate that the following condition holds:
\begin{equation}\label{Prior4}
\lim_{\alpha\to 0} \frac{{\log (\omega_\alpha+\bar\nu_\alpha)}}{|\log \alpha|} = 0.
\end{equation} 

The following proposition, whose proof is given to the Appendix, establishes asymptotic operating characteristics of the procedure $\wtT_A^W$ for large $A$.

%%Proposition
\begin{proposition}\label{Lem:AOCDMSR} 
Suppose that condition \eqref{Prior4} holds and there exist positive and finite numbers $I_{\Bc,\theta}$, $\Bc\in\Pc$, $\theta\in\Theta$, such that right-tail and left-tail conditions
$\C_{1}$ and $\C_{2}$ are satisfied. Then, for all $0<m \le r$, $\Bc\in\Pc$, and $\theta\in\Theta$
\begin{equation} \label{MomentskDMSR}
\lim_{A\to\infty}  \frac{\Rc^m_{k,\Bc,\theta}(\wtT_A^W) }{(\log A)^m} =\frac{1}{I_{\Bc,\theta}^m} \quad  \text{for all}~ k \in \Zbb_+
\end{equation}
and
\begin{equation} \label{MomentsDMSR}
\lim_{A\to\infty}  \frac{\Rca^m_{\Bc,\theta}(\wtT_A^W)}{(\log A)^m} =\frac{1}{I_{\Bc,\theta}^m} .
\end{equation}
\end{proposition}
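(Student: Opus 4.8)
The plan is to follow the proof of Proposition~\ref{Lem:AOCDMS}, adapting it to the Shiryaev--Roberts structure of $R_{\pb,W}(n)$. The key structural difference is that $R_{\pb,W}(n)$ carries no $1/\Pb(\nu\ge n)$ normalization, so the prior tail parameter $\mu$ drops out and the limits involve $I_{\Bc,\theta}^m$ rather than $(I_{\Bc,\theta}+\mu)^m$; the prior now enters only through its mean $\bar\nu_\alpha$ (via the PFA bound of Lemma~\ref{Lem:PFADSR}) and through the head-start $\omega_\alpha$, and condition~\eqref{Prior4} is exactly what makes both grow slowly enough that, after taking $A=A_\alpha=(\omega_\alpha b+\bar\nu_\alpha)/\alpha$, one has $\log A\sim|\log\alpha|$ and the auxiliary factors of the form $(\omega_\alpha+k)/A$ and $A^{-\varepsilon^2/2}(\omega_\alpha+k+\log A)$ appearing below still vanish. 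As in Proposition~\ref{Lem:AOCDMS}, I would first establish, for each $\varepsilon>0$, a matching pair of one-sided bounds for $\Rc^m_{k,\Bc,\theta}(\wtT_A^W)$ that are uniform in $k\in\Zbb_+$, and then deduce the Bayesian statement for $\Rca^m_{\Bc,\theta}(\wtT_A^W)$ by averaging: for the upper bound the weights $\Pb_\infty(\wtT_A^W>k)$ cancel against the conditioning (so the uniform bound passes through immediately), while for the lower bound one truncates the average at a slowly growing $K_\alpha$ and controls the remaining tail, exactly as in Proposition~\ref{Lem:AOCDMS}.

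For the upper bound I would fix $\Bc,\theta,\varepsilon$, pick $\delta=\delta_\varepsilon$ with $W(\Gamma_{\delta,\theta})>0$ as in $\C_2$, keep only the change-point-$k$ term of $R_{\pb,W}(k+n)$, and apply Jensen's inequality to the $W$-integral over $\Gamma_{\delta,\theta}$, getting $R_{\pb,W}(k+n)\ge p_\Bc W(\Gamma_{\delta,\theta})\exp\{\inf_{\vartheta\in\Gamma_{\delta,\theta}}\lambda_{\Bc,\vartheta}(k,k+n)\}$. With $N_A=1+\lfloor(1+\varepsilon)|\log A|/I_{\Bc,\theta}\rfloor$, for $n>N_A$ the event $\{\wtT_A^W-k>n\}\subseteq\{R_{\pb,W}(k+n)<A\}$ lies inside $\{n^{-1}\inf_{\vartheta\in\Gamma_{\delta,\theta}}\lambda_{\Bc,\vartheta}(k,k+n)<I_{\Bc,\theta}-\varepsilon'\}$ for some $\varepsilon'>0$ (the term $n^{-1}\log(p_\Bc W(\Gamma_{\delta,\theta}))$ being negligible), so $\sum_{n>N_A}m\,n^{m-1}\Pb_{k,\Bc,\theta}(\wtT_A^W-k>n)\le\Upsilon_{r}(\varepsilon',\Bc,\theta)<\infty$ by $\C_2$ and $m\le r$, and this bound does not depend on $k$. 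Hence $\EV_{k,\Bc,\theta}[(\wtT_A^W-k)^m;\wtT_A^W>k]\le N_A^m+O(1)$ uniformly in $k$; dividing by $\Pb_{k,\Bc,\theta}(\wtT_A^W>k)=1-\Pb_\infty(\wtT_A^W\le k)\ge1-(\omega+k)/A$ (using~\eqref{PFADoobineqDM}), then letting $A\to\infty$ and $\varepsilon\downarrow0$, gives the upper halves of~\eqref{MomentskDMSR} and, after averaging over $\pi^\alpha$, of~\eqref{MomentsDMSR}.

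For the lower bound, with $N_A^-=\lfloor(1-\varepsilon)|\log A|/I_{\Bc,\theta}\rfloor$, I would show $\Pb_{k,\Bc,\theta}(\wtT_A^W\le k+N_A^-)\to0$ by splitting on $\{\wtT_A^W\le k\}$ and $\{k<\wtT_A^W\le k+N_A^-\}$. On the first set $\Pb_{k,\Bc,\theta}$ coincides with $\Pb_\infty$ on $\Fc_k$, so Doob's inequality~\eqref{PFADoobineqDM} gives $\Pb_\infty(\wtT_A^W\le k)\le(\omega+k)/A\to0$. On the second set I would change measure through optional stopping of the $\Pb_\infty$-martingale $\{LR_{\Bc,\theta}(k,n)\}_{n\ge k}$, writing $\Pb_{k,\Bc,\theta}(k<\wtT_A^W\le k+N_A^-)=\EV_\infty[LR_{\Bc,\theta}(k,\wtT_A^W);\,k<\wtT_A^W\le k+N_A^-]$, and then truncate on the event $\{\max_{1\le m\le N_A^-}LR_{\Bc,\theta}(k,k+m)\le A^{1-\varepsilon^2/2}\}$: on it the expectation is $\le A^{1-\varepsilon^2/2}\Pb_\infty(\wtT_A^W\le k+N_A^-)\le A^{-\varepsilon^2/2}(\omega+k+N_A^-)\to0$ by~\eqref{PFADoobineqDM}, and off it the probability is $\le\Pb_{k,\Bc,\theta}\{(N_A^-)^{-1}\max_{1\le m\le N_A^-}\lambda_{\Bc,\theta}(k,k+m)\ge(1+\varepsilon)I_{\Bc,\theta}\}=\beta_{N_A^-,k}(\varepsilon,\Bc,\theta)\to0$ by~$\C_1$ (the truncation level $A^{1-\varepsilon^2/2}$ being chosen to sit strictly between the Doob scale $A$ and the $\C_1$-controlled growth $A^{1-\varepsilon^2}$). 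Then $\Rc^m_{k,\Bc,\theta}(\wtT_A^W)\ge(N_A^-)^m\Pb_{k,\Bc,\theta}(\wtT_A^W>k+N_A^-)=(N_A^-)^m(1+o(1))$, and $\varepsilon\downarrow0$ gives the lower halves of~\eqref{MomentskDMSR} and~\eqref{MomentsDMSR}; combined with the upper bound this proves the proposition.

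The hard part will be the control of the growth of the \emph{double}-mixture statistic $R_{\pb,W}(n)$, which has no analogue of the recursion/submartingale property enjoyed by $R_{\Bc,\theta}(n)$ for a fixed pair $(\Bc,\theta)$: for the upper bound one is forced to use only the single-term Jensen lower bound on $R_{\pb,W}(k+n)$ and must check that the correction $n^{-1}\log(p_\Bc W(\Gamma_{\delta,\theta}))$ is negligible uniformly in $k$, and for the lower bound one must bound the contribution of the time interval $(k,k+N_A^-]$ without any pathwise control of $R_{\pb,W}$, which the change-of-measure-plus-truncation device circumvents. A second, more routine difficulty is the bookkeeping that allows $\omega_\alpha$ and $\bar\nu_\alpha$ to depend on $\alpha$ and tend to infinity; this is where condition~\eqref{Prior4} is used, and it is also what one leans on when truncating the Bayesian average in the lower-bound step.
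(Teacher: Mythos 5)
Your proposal is correct and follows essentially the same route as the paper: the upper bound via the single change-point term of $R_{\pb,W}$ restricted to $\Gamma_{\delta,\theta}$, Lemma~\ref{LemmaA1} and condition $\C_2$, and the lower bound via Chebyshev plus a change-of-measure-with-truncation bound (which the paper imports as inequality (3.6) of \cite{TartakovskyVeerTVP05} rather than re-deriving), Doob's inequality \eqref{PFADoobineqDM}, condition $\C_1$, and the $K_\alpha$-truncation of the prior average under \eqref{Prior4}. The only cosmetic differences are your slightly different choices of $N_A$ and of the truncation level $A^{1-\varepsilon^2/2}$, which change nothing of substance.
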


Using asymptotic approximations \eqref{MomentskDMSR}--\eqref{MomentsDMSR} in Proposition~\ref{Lem:AOCDMSR}, we now can easily prove that the double-mixture procedure $\wtT_A^W$ attains 
the asymptotic lower bounds \eqref{LBinclass}--\eqref{LBkinclass} in Theorem~\ref{Th:LB} for moments of the detection delay when $\mu=0$. This means that the procedure $\wtT_{A_\alpha}^W$ 
is first-order asymptotically optimal as  $\alpha\to0$  if the prior distribution of the change point belongs to class $\Cb(\mu=0)$.

%%Theorem
\begin{theorem}\label{Th:AoptDSR} 
Assume  that the head-start $\omega=\omega_\alpha$ of the statistic $R_{\pb,W}(n)$ and the mean value $\bar\nu=\bar\nu_\alpha$ of the prior distribution $\{\pi_k^\alpha\}$ approach infinity 
as $\alpha\to0$  with such rate that condition \eqref{Prior4} is satisfied.
Suppose further that for some $0<I_{\Bc,\theta}<\infty$ and $r \ge 1$ conditions $\C_1$ and $\C_2$ are satisfied.  
If threshold $A_\alpha$ is so selected that $\PFA(\wtT_{A_\alpha}^W) \le \alpha$  and $\log A_\alpha \sim |\log \alpha|$ as $\alpha \to 0$, in particular 
$A_\alpha=(\omega_\alpha b_\alpha+\bar\nu_\alpha)/\alpha$, then for all $0<m \le r$, $\Bc\in\Pc$, and $\theta\in\Theta$ as $\alpha\to 0$
\begin{equation} \label{MomentsDSRmu0}
\begin{split}
\Rca^m_{\Bc,\theta}(\wtT_{A_\alpha}^W) & \sim \brc{\frac{|\log\alpha|}{I_{\Bc,\theta}}}^m ,
\\
\Rc^m_{k,\Bc,\theta}(\wtT_{A_\alpha}^W) & \sim \brc{\frac{|\log\alpha|}{I_{\Bc,\theta}}}^m  \quad \text{for all} ~ k\in \Zbb_+.
\end{split}
\end{equation}
If the prior distribution belongs to class $\Cb(\mu=0)$, then for all $0<m \le r$, $\Bc\in\Pc$, and $\theta\in\Theta$ as $\alpha\to 0$
\begin{equation} \label{MomentskMSRmu0opt}
\begin{split}
\inf_{T \in \class} \Rca^m_{\Bc,\theta}(T) & \sim \Rca^m_{\Bc,\theta}(\wtT_{A_\alpha}^W) ,
\\
\inf_{T \in \class} \Rc^m_{k,\Bc,\theta}(T) & \sim \Rc^m_{k,\Bc,\theta}(\wtT_{A_\alpha}^W) \quad \text{for all} ~ k\in \Zbb_+.
\end{split}
\end{equation}
Therefore, the procedure $\wtT_{A_\alpha}^W$ is asymptotically optimal as $\alpha\to0$ in class $\class$, minimizing moments of the detection delay up to order $r$, if the prior distribution of the 
change point belongs to class $\Cb(\mu)$ with $\mu=0$. 
\end{theorem}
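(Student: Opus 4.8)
The plan is to reduce everything to the two ingredients that are already in hand: the asymptotic lower bounds from Theorem~\ref{Th:LB} (specialized to $\mu=0$) and the asymptotic upper bounds on the moments of the detection delay of $\wtT_A^W$ from Proposition~\ref{Lem:AOCDMSR}. The argument is a standard sandwich. First I would verify the admissibility of the chosen threshold: by Lemma~\ref{Lem:PFADSR}, with $A_\alpha=(\omega_\alpha b_\alpha+\bar\nu_\alpha)/\alpha$ we get $\PFA(\wtT_{A_\alpha}^W)\le\alpha$, so $\wtT_{A_\alpha}^W\in\class$. Moreover $\log A_\alpha=|\log\alpha|+\log(\omega_\alpha b_\alpha+\bar\nu_\alpha)$, and since $b_\alpha\le 1$, condition \eqref{Prior4} gives $\log(\omega_\alpha b_\alpha+\bar\nu_\alpha)=o(|\log\alpha|)$, hence $\log A_\alpha\sim|\log\alpha|$ as $\alpha\to0$. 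The same conclusion holds for any other admissible choice of $A_\alpha$ satisfying the hypotheses of the theorem.

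Second, I would feed $A=A_\alpha$ into Proposition~\ref{Lem:AOCDMSR}. Since conditions $\C_1$, $\C_2$ and \eqref{Prior4} are assumed, the proposition gives, for every $0<m\le r$, $\Bc\in\Pc$, $\theta\in\Theta$, and every $k\in\Zbb_+$,
\[
\Rc^m_{k,\Bc,\theta}(\wtT_{A_\alpha}^W)\sim\brc{\frac{\log A_\alpha}{I_{\Bc,\theta}}}^m,\qquad
\Rca^m_{\Bc,\theta}(\wtT_{A_\alpha}^W)\sim\brc{\frac{\log A_\alpha}{I_{\Bc,\theta}}}^m\qquad\text{as }\alpha\to0,
\]
and substituting $\log A_\alpha\sim|\log\alpha|$ yields exactly the approximations \eqref{MomentsDSRmu0}. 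Note that Proposition~\ref{Lem:AOCDMSR} is stated for fixed $A\to\infty$, so strictly one should observe that letting $A=A_\alpha\to\infty$ along with $\alpha\to0$ is legitimate because the limits there are uniform in the relevant quantities and $A_\alpha$ is a deterministic function of $\alpha$; this is the only mildly delicate bookkeeping point.

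Third, to obtain the optimality statement \eqref{MomentskMSRmu0opt}, assume the prior belongs to $\Cb(\mu)$ with $\mu=0$. Then Theorem~\ref{Th:LB} with $\mu=0$ gives, for all $r>0$,
\[
\liminf_{\alpha\to0}\frac{\inf_{T\in\class}\Rc^r_{k,\Bc,\theta}(T)}{|\log\alpha|^r}\ge\frac{1}{I_{\Bc,\theta}^r},\qquad
\liminf_{\alpha\to0}\frac{\inf_{T\in\class}\Rca^r_{\Bc,\theta}(T)}{|\log\alpha|^r}\ge\frac{1}{I_{\Bc,\theta}^r}.
\]
Since $\wtT_{A_\alpha}^W\in\class$, its moments dominate the infima over $\class$; combined with the matching upper bounds from step two, we get $\inf_{T\in\class}\Rc^m_{k,\Bc,\theta}(T)\sim(|\log\alpha|/I_{\Bc,\theta})^m\sim\Rc^m_{k,\Bc,\theta}(\wtT_{A_\alpha}^W)$, and likewise for $\Rca^m_{\Bc,\theta}$, for every $0<m\le r$ (the lower bound is applied with exponent $m$, which is legitimate since Theorem~\ref{Th:LB} holds for all positive orders). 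This is precisely \eqref{MomentskMSRmu0opt} and establishes first-order asymptotic optimality of $\wtT_{A_\alpha}^W$ in $\class$ for priors in $\Cb(0)$. I do not expect any genuine obstacle here: the theorem is a corollary of the two cited results, and the only care needed is the uniformity/deterministic-threshold remark and checking that \eqref{Prior4} plus $b_\alpha\le1$ indeed forces $\log A_\alpha\sim|\log\alpha|$.
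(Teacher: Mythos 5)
Your proposal is correct and follows essentially the same route as the paper: it plugs the choice $\log A_\alpha\sim|\log\alpha|$ into Proposition~\ref{Lem:AOCDMSR} to get \eqref{MomentsDSRmu0}, then matches these against the lower bounds of Theorem~\ref{Th:LB} with $\mu=0$ to conclude \eqref{MomentskMSRmu0opt}. The extra bookkeeping you include (Lemma~\ref{Lem:PFADSR} for admissibility and condition \eqref{Prior4} forcing $\log A_\alpha\sim|\log\alpha|$) is implicit in the theorem's hypotheses and does not change the argument.
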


%%Proof
\proof
If $A_\alpha$ is so selected that  $\log A_\alpha\sim |\log\alpha|$ as $\alpha\to0$, then asymptotic approximations \eqref{MomentsDSRmu0} follow immediately 
from asymptotic approximations \eqref{MomentskDMSR}--\eqref{MomentsDMSR} in Proposition~\ref{Lem:AOCDMSR}.  Since these approximations are the same as the asymptotic lower bounds 
\eqref{LBinclass}--\eqref{LBkinclass} in Theorem~\ref{Th:LB} for $\mu=0$, this shows that these bounds are attained by 
the detection procedure $\wtT_{A_\alpha}^W$ whenever the prior distribution belongs to class $\Cb(\mu=0)$, which completes the proof of assertions \eqref{MomentskMSRmu0opt}. 
\endproof

While the procedure  $\wtT_{A_\alpha}^W$ is asymptotically optimal for heavy-tailed priors (when $\mu=0$ in \eqref{Prior}),  comparing \eqref{MomentsDSRmu0}
with the assertion of Theorem~\ref{Th:FOAODMS} (see \eqref{FOAODMSmomentsk} and \eqref{FOAODMSmoments}) we can see that the procedure $\wtT_{A_\alpha}^W$ is not asymptotically optimal 
when $\mu>0$, i.e.,  for the priors with asymptotic exponential tails. This can be expected since the statistic $R_{\pb,W}(n)$ uses the uniform prior distribution of the change point. 

%%___________________________________________________________________________________
\section{Asymptotic optimality with respect to the average risk}\label{sec:AOIR}

In this section, instead of the constrained optimization problem \eqref{sec:PrbfBayes} we are interested in the unconstrained Bayes problem \eqref{FOAOdefAvRisk} 
with the average (integrated) risk function $\rho_{\pi, \pb, W}^{c,r}(T)$ defined in
\eqref{Averrisk}, where $c>0$ is the cost of delay per time unit  and  $r \ge 1$. 
Below we show that the double-mixture procedure $T_A^W$ with a certain threshold $A=A_{c,r}$ that depends on the cost $c$ is asymptotically optimal, minimizing the average 
risk $\rho_{\pi, \pb, W}^{c,r}(T)$  to first order over all stopping times as the cost vanishes, $c\to0$. 

Recall that $\Rca_{\pb, W}^{r}(T)$, $D_{\mu,r}$, and $G_{c,r}(A)$ are defined in \eqref{R}, \eqref{Dr}, and \eqref{Gfunction}, respectively. 
Since $\PFA(T_A^W) \approx 1/(1+A)$ (ignoring an excess over the boundary), using the asymptotic formula~\eqref{MomentsDMS} we obtain that for a large $A$
\[
\Rca_{\pb,W}^r(T_A) \approx \sum_{\Bc\in\Pc} p_\Bc \int_\Theta \brc{\frac{\log A}{I_{\Bc,\theta} + \mu}}^r \, \drm W(\theta) = (\log A)^r D_{\mu,r} .
\]
So for large~$A$ the average risk of the procedure $T_A$ is approximately equal to
\begin{align*}
\rho_{\pi, \pb,W}^{c,r}(T_A^W)  = \PFA(T_A^W) + c \, [1-\PFA(T_A^W)] \Rca_{\pb,W}^r(T_A^W)
\approx G_{c,r}(A).
\end{align*}
The procedure $T_{A_{c,r}}$ with the threshold value $A=A_{c,r}$ that minimizes $G_{c,r}(A)$, $A>0$, which 
is a solution of the equation \eqref{eq:threshold} (see below), is a reasonable candidate for being asymptotically optimal in the Bayesian sense as $c\to0$, i.e., in the asymptotic problem  
\eqref{FOAOdefAvRisk}.
The next theorem shows that this is true under conditions $\C_1$ and $\C_2$ when the set $\Theta$ is compact and that the same is true for the procedure $\wtT_{A_{c,r}}^W$ 
with certain threshold $A_{c,r}$ in class of priors $\C(\mu)$ with $\mu=0$.

%%%%%Theorem
\begin{theorem} \label{Th:FOasopt_pureBayes}  
 Assume that for some $0<I_{\Bc,\theta}<\infty$, $\Bc\in\Pc$, $\theta\in\Theta$, right-tail and left-tail conditions
$\C_{1}$ and $\C_{2}$ are satisfied and that $\Theta$ is a compact set.  

\noindent {\bf (i)}   If the prior distribution of the change point $\pi^c=\{\pi_k^c\}$  satisfies condition \eqref{Prior} with 
$\lim_{c\to0} \mu_c =\mu$ and 
\begin{equation}\label{Prior3c}
\lim_{c\to 0} \frac{{\sum_{k=0}^\infty  \pi_k^c |\log \pi_k^c|^r}}{|\log c|^r} = 0, 
\end{equation} 
and if threshold $A=A_{c,r}$ of the procedure $T_A^W$ is the solution of the equation
\begin{equation} \label{eq:threshold}
r D_{\mu,r} A (\log A)^{r-1}=1/c,
\end{equation}
 then
\begin{equation} \label{RiskAOgenexpflat}
\inf_{T\ge 0} \rho_{\pi, \pb,W}^{c,r}(T) \sim D_{\mu,r} \, c \, |\log c|^r  \sim \rho_{\pi, \pb,W}^{c,r}(T_{A_{c,r}}) \quad \text{as}~~c\to 0,
\end{equation}
i.e., $T_{A_{c,r}}^W$ is first-order asymptotically Bayes as $c\to0$.

\noindent {\bf (ii)} If the head-start $\omega_c$ and the mean of the prior distribution $\bar\nu_c$ approach infinity at such rate that 
\begin{equation}\label{Prior4c}
\lim_{c \to 0} \frac{{\log (\omega_c+\bar\nu_c)}}{|\log c|} = 0
\end{equation}
and if threshold $A=A_{c,r}$ of the procedure $\wtT_{A}^W$  is the solution of the equation 
\begin{equation} \label{threshold2}
r D_{0,r} A (\log A)^{r-1}=(\omega_c b_c +\bar{\nu}_c)/c,
\end{equation}
then
\begin{equation} \label{RiskDMSRBayes}
\begin{split}
 \rho_{\pi, \pb,W}^{c,r}(\wtT_{A_{c,r}})  \sim D_{0,r} \, c \, |\log c|^r   \quad \text{as}~~c\to 0,
\end{split}
\end{equation}
i.e., $\wtT_{A_{c,r}}^W$ is first-order asymptotically Bayes as $c\to0$ in the class of priors $\C(\mu=0)$.
\end{theorem}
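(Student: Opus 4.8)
The plan is to derive both parts from the lower bound in Theorem~\ref{Th:LB} together with the asymptotic operating characteristics already established for each procedure (Propositions~\ref{Lem:AOCDMS} and~\ref{Lem:AOCDMSR}), by the same ``sandwich'' argument used for the lower bound \eqref{LBAR}: match the upper bound on the average risk of the candidate procedure to $G_{c,r}(A_{c,r})=\min_{A>0}G_{c,r}(A)$, and recall that $G_{c,r}(A_{c,r})\sim D_{\mu,r}\,c\,|\log c|^r$.

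For part (i), I would first verify that under the stated threshold choice \eqref{eq:threshold} one has $\log A_{c,r}\sim|\log c|$ as $c\to0$, exactly as recorded in the proof of Theorem~\ref{Th:LB}. Next, fixing any $\alpha$-free sequence is not needed here --- instead, $A_{c,r}\to\infty$ as $c\to0$, so Proposition~\ref{Lem:AOCDMS}(ii) applies and gives $\Rca^r_{\Bc,\theta}(T_{A_{c,r}}^W)\sim(|\log A_{c,r}|/(I_{\Bc,\theta}+\mu))^r\sim(|\log c|/(I_{\Bc,\theta}+\mu))^r$ for each $\Bc,\theta$. The step that needs care is passing from this pointwise-in-$(\Bc,\theta)$ convergence to convergence of the \emph{integrated} risk $\Rca^r_{\pb,W}(T_{A_{c,r}}^W)=\sum_\Bc p_\Bc\int_\Theta\Rca^r_{\Bc,\theta}(T_{A_{c,r}}^W)\,\drm W(\theta)$; this is where compactness of $\Theta$ enters --- one wants the convergence in Proposition~\ref{Lem:AOCDMS}(ii) to be uniform in $\theta$ over the compact set (and $\Pc$ is finite, so the sum over $\Bc$ is harmless), which lets one invoke dominated convergence with dominating function derived from the uniform $r$-complete-convergence bound $\Upsilon_r$ in $\C_2$. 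Granting the uniform statement, $\Rca^r_{\pb,W}(T_{A_{c,r}}^W)\sim(\log A_{c,r})^r D_{\mu,r}$, and combining with $\PFA(T_{A_{c,r}}^W)\le 1/(1+A_{c,r})$ from Lemma~\ref{Lem:PFADMS}, the average risk satisfies $\rho_{\pi,\pb,W}^{c,r}(T_{A_{c,r}}^W)\le G_{c,r}(A_{c,r})(1+o(1))\sim D_{\mu,r}\,c\,|\log c|^r$. The matching lower bound $\inf_{T\ge0}\rho_{\pi,\pb,W}^{c,r}(T)\ge D_{\mu,r}\,c\,|\log c|^r(1+o(1))$ is exactly \eqref{LBAR} in Theorem~\ref{Th:LB}, which requires only $\C_1$ and the prior condition; note condition \eqref{Prior3c} is the $c\to0$ analogue of $\mb{CP}\,\mb{2}$ that makes the relevant $o(1)$ terms (the contributions of $\Pb(\nu>K_c)$ and $\sum\pi_k^c|\log\pi_k^c|^r$) negligible. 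Sandwiching the two bounds gives \eqref{RiskAOgenexpflat}.

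For part (ii), the argument is structurally identical but uses Proposition~\ref{Lem:AOCDMSR} in place of Proposition~\ref{Lem:AOCDMS}, so the relevant exponent is $I_{\Bc,\theta}$ rather than $I_{\Bc,\theta}+\mu$ --- i.e.\ $D_{0,r}$ replaces $D_{\mu,r}$ --- reflecting that the SR-type statistic implicitly carries the uniform (improper) prior; this is why asymptotic optimality only survives for $\mu=0$. I would again check $\log A_{c,r}\sim|\log c|$ under \eqref{threshold2} using \eqref{Prior4c} (the term $\log(\omega_c b_c+\bar\nu_c)$ is $o(|\log c|)$ by assumption), apply Lemma~\ref{Lem:PFADSR} for the PFA bound, and use the uniform version of \eqref{MomentsDMSR} over the compact $\Theta$ to get $\Rca^r_{\pb,W}(\wtT_{A_{c,r}}^W)\sim(\log A_{c,r})^r D_{0,r}$. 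This yields $\rho_{\pi,\pb,W}^{c,r}(\wtT_{A_{c,r}}^W)\le G_{c,r}(A_{c,r})(1+o(1))\sim D_{0,r}\,c\,|\log c|^r$; combined with \eqref{LBAR} specialized to $\mu=0$ (legitimate because the priors lie in $\C(0)$), \eqref{RiskDMSRBayes} follows.

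**The main obstacle** I anticipate is making rigorous the uniformity in $\theta\in\Theta$ of the asymptotic approximations for the conditional and average detection-delay moments of $T_A^W$ and $\wtT_A^W$ --- Propositions~\ref{Lem:AOCDMS} and~\ref{Lem:AOCDMSR} are stated pointwise in $\theta$, but the average risk integrates over $\theta$. The compactness hypothesis on $\Theta$ is precisely what is needed, but one must either extract a uniform bound from the proofs of those propositions (where $\C_2$'s quantity $\Upsilon_r(\varepsilon,\Bc,\theta)$ and the $\delta$-neighborhood mixing condition $W(\Gamma_{\delta,\theta})>0$ do the work) or carefully cover $\Theta$ by finitely many $\delta$-balls and control the oscillation of $I_{\Bc,\theta}$ and of the overshoot terms across each ball; this, plus justifying the interchange of limit and integral via a dominating function, is the only genuinely non-routine part of the argument.
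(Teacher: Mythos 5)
Your proposal is correct and follows essentially the same route as the paper: Propositions~\ref{Lem:AOCDMS} and~\ref{Lem:AOCDMSR} for the delay asymptotics, Lemmas~\ref{Lem:PFADMS} and~\ref{Lem:PFADSR} for the PFA, the threshold equations giving $\log A_{c,r}\sim|\log c|$ so the PFA term is $o(c|\log c|^r)$, and the lower bound \eqref{LBAR} of Theorem~\ref{Th:LB} to close the sandwich. The uniformity-in-$\theta$ issue you flag is precisely what the compactness hypothesis is invoked for in the paper (which treats it tersely with ``since $\Theta$ is compact\dots''), so your elaboration is consistent with, not divergent from, the published argument.
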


%%Proof
\begin{proof}
The proof is based on the technique used by Tartakovsky for proving Theorems~5 and 6 in \cite{TartakovskyIEEEIT2018} for the single stream problem with an unknown 
post-change parameter for the prior with
the fixed $\mu_c=\mu$ for all $c$ in condition \eqref{Prior} and  with positive $\mu_c$ which vanishes when $c\to0$. A more general prior considered in this article is 
handled analogously. 

The proof of part (i). Since $\Theta$ is compact it follows from Proposition~\ref{Lem:AOCDMS}(ii) (cf.\ the asymptotic approximation \eqref{MomentsDMS}) that under conditions $\C_{1}$, $\C_{2}$,
and \eqref{Prior3c} as $A\to\infty$
\[
\begin{split}
\Rca_{\pb,W}^r(T_A) & \sim \sum_{\Bc\in\Pc} p_\Bc \int_\Theta \brc{\frac{\log A}{I_{\Bc,\theta} + \mu}}^r \, \drm W(\theta) 
\\
&  = D_{\mu,r} (\log A)^r .
\end{split}
\]
By Lemma~\ref{Lem:PFADMS}, $\PFA(T_A^W) \le 1/(A+1)$. Obviously, if threshold $A_{c,r}$ satisfies equation \eqref{eq:threshold}, then $\log A_{c,r} \sim |\log c|$ and 
$\PFA(T_{A_{c,r}}^W) =o(c|\log c|^r)$ as $c\to 0$ (for any $r \ge 1$ and $\mu \ge 0$). As a result, 
\[
\rho_{\pi, \pb,W}^{c,r}(T_{A_{c,r}}) \sim D_{\mu,r} \, c \, |\log c|^r  \quad \text{as}~~c\to 0,
\]
which along with the lower bound \eqref{LBAR} in Theorem~\ref{Th:LB} completes the proof of the assertion (i).

The proof of part (ii). Since $\Theta$ is compact it follows from the asymptotic approximation \eqref{MomentsDMSR} in Proposition~\ref{Lem:AOCDMSR}  that under conditions $\C_{1}$, $\C_{2}$,
and \eqref{Prior4c} as $A\to\infty$
\[
\begin{split}
\Rca_{\pb,W}^r(\wtT_A^W) & \sim \sum_{\Bc\in\Pc} p_\Bc \int_\Theta \brc{\frac{\log A}{I_{\Bc,\theta}}}^r \, \drm W(\theta) 
\\
&  = D_{0,r} (\log A)^r .
\end{split}
\]
Define 
\[
\widetilde{G}_{c,r}(A)=(\omega_c b_c +\bar{\nu}_c)/A + c \, D_{0,r} (\log A)^r.
\]
By Lemma~\ref{Lem:PFADSR}, $\PFA(\wtT_A) \le (\omega_c b_c +\bar{\nu}_c)/A$, so that for a sufficiently large $A$,
\[
\rho_{\pi, \pb,W}^{c,r}(\wtT_{A}^W) \approx \widetilde{G}_{c,r}(A).
\] 
Threshold $A_{c,r}$, which satisfies equation \eqref{threshold2}, minimizes $\widetilde{G}_{c,r}(A)$. By assumption \eqref{Prior4c},  $\omega_c b_c +\bar{\nu}_c = o(|\log c|)$ as $c\to0$, so
that $\log A_{c,r} \sim |\log c|$ and $\PFA(\wtT_{A_{c,r}}) \le (\omega_c b_c+\bar\nu_c)/A_{c,r} = o(c|\log c|^r)$ as $c\to 0$. Hence, it follows that
\[
\rho_{\pi, \pb,W}^{c,r}(\wtT_{A_{c,r}}^W)  \sim \widetilde{G}_{c,r}(A_{c,r}) \sim D_{0,r} \, c \, |\log c|^r  \quad \text{as}~~c\to 0.
\]
This implies the asymptotic approximation~\eqref{RiskDMSRBayes}. Asymptotic optimality of $\wtT_{A_{c,r}}^W$ in the class of priors $\C(\mu=0)$ follows from \eqref{RiskAOgenexpflat} and
\eqref{RiskDMSRBayes}.
\end{proof}

%%____________________________________________________________________________________
\section{A remark on asymptotic optimality for a putative value of the post-change parameter}\label{sec:AOMS} 

If the value of the post-change parameter $\theta=\vartheta$ is known or its putative value $\vartheta$ is of special interest,
representing a nominal change, then it is reasonable to turn the double-mixture procedures 
$T_A^W$ and $\wtT^W$ in single-mixture procedures $T_A^\vartheta$ and $\wtT_A^\vartheta$ by taking the degenerate weight function $W$ concentrated at $\vartheta$.  
These procedures are of the form
\begin{equation*}%\label{MS_def}
T_A^\vartheta=\inf\set{n \ge 1: S_{\pb,\vartheta}^\pi(n) \ge A}, \quad
\wtT_A^\vartheta=\inf\set{n \ge 1: R_{\pb,\vartheta}(n) \ge A},
\end{equation*}
and they have first-order asymptotic optimality properties at the point $\theta=\vartheta$ (and only at this point) with respect to $\Rc_{k,\Bc,\vartheta}(T)$ and $\Rca_{\Bc,\vartheta}(T)$
 when the right-tail condition $\C_1$ is satisfied for $\theta=\vartheta$ and the following left-tail condition holds:
 
\vspace{2mm}
\noindent $\widetilde{\C}_{2}$. {\em  For every $\Bc\in\Pc$, $\varepsilon>0$, and for some $r\ge 1$}
\begin{equation}\label{rcompunifLeft}
\sum_{n=1}^\infty \, n^{r-1} \, \sup_{k \in \Zbb_+} \Pb_{k,\Bc,\vartheta}\brc{\frac{1}{n}\lambda_{\Bc,\vartheta}(k,k+n) < I_{\Bc,\vartheta}  - \varepsilon} <\infty  .
\end{equation}

The assertions of Theorem~\ref{Th:FOasopt_pureBayes} also hold under conditions $\C_1$ and $\widetilde{\C}_2$ for the average risk 
\[
\rho_{\pi, \pb, \vartheta}^{c,r}(T) =  \PFA(T) + c \, \sum_{\Bc\in\Pc} p_{\Bc}  \Eb_{\Bc,\vartheta}^\pi[(T-\nu)^+]^r
\]
with
\[
D_{\mu,r} =  \sum_{\Bc\in\Pc} p_\Bc \brc{\frac{1}{I_{\Bc,\vartheta} + \mu}}^r .
\]

\section{Asymptotic optimality in the case of independent streams}\label{sec:indstreams}

A particular, still very general scenario is where the data streams are mutually independent (but still have a quite general statistical structure) is of special interest for many applications. 
In this case, the model is given by \eqref{ind} and, as discussed in Subsection~\ref{sssec:indepchannels}, the implementation of detection procedures may be feasible since the LR process 
$\Lambda_{\pb,\theta}(k,n)$ can be easily computed (see \eqref{mixLRind}). Moreover, in the case of independent data streams all the results obviously hold for different 
values of the parameter $\theta=\theta_i\in\Theta_i$ in streams, which we will assume in this section. Specifically, we will write $\theta_i$ for a post-change parameter in the $i$th stream and
bold $\teb_\Bc=(\theta_i, i\in \Bc)\in \mb{\Theta}_\Bc$ for the vector of post-change parameters in the subset of streams $\Bc$.

Since the data are independent across streams, for an assumed
value of the change point $\nu = k$,  stream $i \in [N]$, and the post-change parameter value in the $i$th stream $\theta_i$, the LLR of observations accumulated by time $k+n$ is given by
\[
\lambda_{i,\theta_i}(k, k+n)= \sum_{t=k+1}^{k+n}\log \frac{f_{i,\theta_i}(X_i(t)| \Xb_i^{t-1})}{g_{i}(X_i(t)| \Xb_i^{t-1})}, \quad n \ge 1.
\]
Let
\begin{align*}
\beta_{M,k}(\varepsilon,i,\theta_i) & =\Pb_{k,i,\theta_i}\set{\frac{1}{M}\max_{1 \le n \le M} \lambda_{i,\theta_i}(k, k+n) \ge (1+\varepsilon) I_{i,\theta_i}},
\\
U_r(\varepsilon, i,\theta_i) & =   \sum_{n=1}^\infty n^{r-1} \sup_{k\in \Zbb_+} \Pb_{k,i, \theta_i}\set{\frac{1}{n} \inf_{\vartheta\in\Gamma_{\delta,\theta_i}}\lambda_{i,\vartheta}(k, k+n) < I_{i,\theta_i}  - \varepsilon}.
\end{align*}

Assume that the following conditions are satisfied for local statistics in data streams:
\vspace{2mm}

\noindent $\C_{1}^{(i)}$. {\em  There exist positive and finite numbers $I_{i,\theta_i}$, $\theta_i\in \Theta_i$, $i \in [N]$, such that for any  $\varepsilon >0$}
\begin{equation}\label{sec:Pmaxi}
\lim_{M\to\infty} \beta_{M,k}(\varepsilon,i,\theta_i) =0 \quad \text{for all}~ k\in \Zbb_+, \theta_i\in\Theta_i,  i \in [N]  ;
\end{equation}

\noindent $\C_{2}^{(i)}$. {\em  For any $\varepsilon>0$ there exists $\delta=\delta_{\varepsilon}>0$ such that $W(\Gamma_{\delta,\theta_i})>0$
and for any $\varepsilon>0$ and some $r\ge 1$}
\begin{equation}\label{rcompLefti}
U_r(\varepsilon, i,\theta_i) < \infty \quad \text{for all}~  \theta_i\in\Theta_i, i\in [N]  .
\end{equation}

Let $I_{\Bc,\teb_{\Bc}} = \sum_{i\in\Bc} I_{i,\theta_i}$. Since the LLR process $\lambda_{\Bc,\teb_\Bc}(k,k+n)$ is the sum of independent local LLRs,
$\lambda_{\Bc,\teb_\Bc}(k,k+n) = \sum_{i\in\Bc} \lambda_{i,\theta_i}(k,k+n)$ (see \eqref{LRind}), it is easy to show that
\[
\beta_{M,k}(\varepsilon,\Bc,\teb_\Bc) \le \sum_{i\in\Bc} \beta_{M,k}(\varepsilon,i,\theta_i) ,
\]
so that local conditions $\C_{1}^{(i)}$ imply global right-tail condition $\C_{1}$. This is true, in particular, if the normalized local LLRs $n^{-1}\lambda_{i,\theta_i}(k, k+n)$ 
converge $\Pb_{k,i, \theta_i}$-a.s.\ to  $I_{i,\theta_i}$,  $i=1,\dots,N$, in which case the SLLN for the global LLR \eqref{sec:MaRe.1} holds with $I_{\Bc,\theta_{\Bc}} = \sum_{i\in\Bc} I_{i,\theta_i}$.
Also,
\[
U_r(\varepsilon, \Bc,\teb_\Bc) \le  \sum_{i\in\Bc} U_r(\varepsilon, \Bc,\theta_i) ,
\]
which shows that local left-tail conditions $\C_2^{(i)}$ imply global left-tail condition $\C_2$.

Thus, Theorem~\ref{Th:FOAODMS} and Theorem~\ref{Th:AoptDSR} imply the following results on asymptotic properties of the double-mixture procedures $T_A^W$ and $\wtT_A^W$.

%%Corollary
\begin{corollary}\label{Cor:Cor1}
Let $r\ge 1$  and assume that for some positive and finite numbers $I_{i,\theta_i}$, $\theta_i\in\Theta_i$, $i=1\dots,N$, 
right-tail and left-tail conditions $\C_{1}^{(i)}$ and $\C_{2}^{(i)}$ for local data streams are satisfied. 

\noindent {\bf (i)} Let the prior distribution of the change point belong to class $\Cb(\mu)$.  
If $A=A_\alpha$ is so selected that $\PFA(T_{A_\alpha}^W) \le \alpha$ and $\log A_\alpha\sim |\log\alpha|$ as $\alpha\to0$, 
in particular $A_\alpha=(1-\alpha)/\alpha$, and if conditions  $\mb{CP} {\mb 2}$ and $\mb{CP} {\mb 3}$ are satisfied, then 
asymptotic formulas \eqref{FOAODMSmomentsk} and \eqref{FOAODMSmoments} hold with $I_{\Bc,\theta} = I_{\Bc,\teb_\Bc}=\sum_{i\in\Bc} I_{i,\theta_i}$, and therefore,  
$T_{A_\alpha}^W$ is first-order asymptotically optimal as $\alpha\to0$ in class $\class$, 
minimizing moments of the detection delay  up to order $r$ uniformly for all $\Bc\in\Pc$ and  $\teb_\Bc \in \mb{\Theta}_\Bc$.

\noindent {\bf (ii)} If threshold $A_\alpha$ is so selected that $\PFA(\wtT_{A_\alpha}^W) \le \alpha$  and $\log A_\alpha \sim |\log \alpha|$ as $\alpha \to 0$, in particular 
$A_\alpha=(\omega_\alpha b_\alpha+\bar\nu_\alpha)/\alpha$, and if condition \eqref{Prior4} is satisfied, then asymptotic formulas \eqref{MomentsDSRmu0} and \eqref{MomentskMSRmu0opt} hold
with $I_{\Bc,\theta} = I_{\Bc,\teb_\Bc}=\sum_{i\in\Bc} I_{i,\theta_i}$, and therefore,  $\wtT_{A_\alpha}^W$ is first-order asymptotically optimal as $\alpha\to0$ in class $\class$, 
minimizing moments of the detection delay  up to order $r$ uniformly for all $\Bc\in\Pc$ and  $\teb_\Bc\in\mb{\Theta}_\Bc$ if the prior distribution of the 
change point belongs to class $\Cb(\mu)$ with $\mu=0$. 
\end{corollary}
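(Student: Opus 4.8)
The plan is to deduce Corollary~\ref{Cor:Cor1} directly from Theorem~\ref{Th:FOAODMS} and Theorem~\ref{Th:AoptDSR} by verifying that the local stream conditions $\C_1^{(i)}$ and $\C_2^{(i)}$ force the global conditions $\C_1$ and $\C_2$ for the subset log-likelihood ratio processes $\lambda_{\Bc,\teb_\Bc}(k,k+n)$, with the identification of Kullback--Leibler numbers $I_{\Bc,\teb_\Bc}=\sum_{i\in\Bc}I_{i,\theta_i}$. The key structural fact is that, because the streams are independent, $\lambda_{\Bc,\teb_\Bc}(k,k+n)=\sum_{i\in\Bc}\lambda_{i,\theta_i}(k,k+n)$ is a sum of $|\Bc|\le K$ independent local LLR processes, so stability of each summand transfers additively to the sum. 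Once $\C_1$ and $\C_2$ are in force, part~(i) is a restatement of Theorem~\ref{Th:FOAODMS} (with the PFA bound and threshold calibration supplied by Lemma~\ref{Lem:PFADMS}) and part~(ii) is a restatement of Theorem~\ref{Th:AoptDSR} (with Lemma~\ref{Lem:PFADSR} supplying the PFA bound), in each case substituting $I_{\Bc,\teb_\Bc}$ for $I_{\Bc,\theta}$ and noting uniformity over $\Bc\in\Pc$ and $\teb_\Bc\in\mb{\Theta}_\Bc$.

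First I would verify the right-tail condition $\C_1$. The convergence $n^{-1}\lambda_{\Bc,\teb_\Bc}(k,k+n)\to I_{\Bc,\teb_\Bc}$ in $\Pb_{k,\Bc,\teb_\Bc}$-probability (and $\Pb_{k,\Bc,\teb_\Bc}$-a.s.\ when the local SLLN holds) is obtained by summing the local convergences over $i\in\Bc$. For the maximal probability I would note that $(1+\varepsilon)I_{\Bc,\teb_\Bc}=\sum_{i\in\Bc}(1+\varepsilon)I_{i,\theta_i}$, so on the event $M^{-1}\max_{1\le n\le M}\lambda_{\Bc,\teb_\Bc}(k,k+n)\ge(1+\varepsilon)I_{\Bc,\teb_\Bc}$, evaluating the sum at the maximizing index $n^\ast$ forces $\lambda_{i,\theta_i}(k,k+n^\ast)\ge(1+\varepsilon)M\,I_{i,\theta_i}$ for at least one $i\in\Bc$; a union bound then yields $\beta_{M,k}(\varepsilon,\Bc,\teb_\Bc)\le\sum_{i\in\Bc}\beta_{M,k}(\varepsilon,i,\theta_i)$, and letting $M\to\infty$ gives $\C_1$ from $\C_1^{(i)}$.

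Next I would verify the left-tail $r$-complete convergence condition $\C_2$. Taking the mixing neighborhood $\Gamma_{\delta,\teb_\Bc}=\prod_{i\in\Bc}\Gamma_{\delta,\theta_i}$, independence gives $\inf_{\vartheta\in\Gamma_{\delta,\teb_\Bc}}\lambda_{\Bc,\vartheta}(k,k+n)=\sum_{i\in\Bc}\inf_{\vartheta_i\in\Gamma_{\delta,\theta_i}}\lambda_{i,\vartheta_i}(k,k+n)$, so if this sum is below $n(I_{\Bc,\teb_\Bc}-\varepsilon)$ then at least one local infimum is below $n(I_{i,\theta_i}-\varepsilon/|\Bc|)$. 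A union bound over the at most $K$ streams, followed by summation against $n^{r-1}$ and the supremum over $k$, gives $\Upsilon_{r}(\varepsilon,\Bc,\teb_\Bc)\le\sum_{i\in\Bc}U_r(\varepsilon/|\Bc|,i,\theta_i)$; since $\C_2^{(i)}$ is assumed for every $\varepsilon>0$ and $W$ charges each $\Gamma_{\delta,\theta_i}$, the right-hand side is finite and $W(\Gamma_{\delta,\teb_\Bc})>0$, establishing $\C_2$.

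The remaining work is bookkeeping: with $\C_1$ and $\C_2$ in force, part~(i) follows from Theorem~\ref{Th:FOAODMS}, reading off the asymptotics \eqref{FOAODMSmomentsk} and \eqref{FOAODMSmoments} under conditions $\mb{CP} {\mb 2}$ and $\mb{CP} {\mb 3}$ with $A_\alpha=(1-\alpha)/\alpha$; and part~(ii) follows from Theorem~\ref{Th:AoptDSR}, reading off \eqref{MomentsDSRmu0} and \eqref{MomentskMSRmu0opt} under \eqref{Prior4} with $A_\alpha=(\omega_\alpha b_\alpha+\bar\nu_\alpha)/\alpha$, using that a prior in $\Cb(\mu)$ with $\mu=0$ makes the lower bounds of Theorem~\ref{Th:LB} coincide with the upper bounds of Proposition~\ref{Lem:AOCDMSR}. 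I expect the only genuinely delicate point to be the two interchanges above---the maximum of a sum is not the sum of maxima, and the infimum over the product neighborhood must be decomposed before the union bound---but both are handled by evaluating at the extremizing index or parameter and splitting the deficit $\varepsilon$ evenly across the at most $K$ affected streams, so no probabilistic input beyond $\C_1^{(i)}$ and $\C_2^{(i)}$ is required.
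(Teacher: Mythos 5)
Your proposal is correct and follows essentially the same route as the paper: in Section~\ref{sec:indstreams} the paper also deduces the global conditions $\C_1$ and $\C_2$ from the local ones via the decomposition $\lambda_{\Bc,\teb_\Bc}(k,k+n)=\sum_{i\in\Bc}\lambda_{i,\theta_i}(k,k+n)$ and the bounds $\beta_{M,k}(\varepsilon,\Bc,\teb_\Bc)\le\sum_{i\in\Bc}\beta_{M,k}(\varepsilon,i,\theta_i)$ and $\Upsilon_r(\varepsilon,\Bc,\teb_\Bc)\le\sum_{i\in\Bc}U_r(\cdot,i,\theta_i)$, and then invokes Theorems~\ref{Th:FOAODMS} and~\ref{Th:AoptDSR}. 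Your explicit splitting of the deficit as $\varepsilon/|\Bc|$ in the left-tail step (harmless since $\C_2^{(i)}$ holds for every $\varepsilon>0$) is in fact a slightly more careful rendering of the inequality the paper states only in sketch form.
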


%%%%%Remark
\begin{remark} \label{Rem: SufCond}
Obviously, the following condition implies condition $\C_2^{(i)}$:

\noindent $\C_{3}^{(i)}$. {\em  For any $\varepsilon>0$ there exists $\delta=\delta_{\varepsilon}>0$ such that $W(\Gamma_{\delta,\theta_i}) >0$. Let the $\Theta_i \to\bbr_{+}$  functions
$I_{i}(\theta_i)=I_{i,\theta_i}$ be continuous and assume that  for every compact set $\Theta_{c,i}\subseteq \Theta_i$, every  $\varepsilon>0$, and some $r\ge 1$}
\begin{equation}\label{rcompLeft*}
\Upsilon^{*}_r(\varepsilon, i, \Theta_{c,i}):= \sup_{\theta\in\Theta_{c,i}} \Upsilon_r(\varepsilon,i,\theta_i)  <\infty \quad \text{for all}~ i \in [N].
\end{equation} 
Note also that if there exists continuous $\Theta_i\times \Theta_i \to\bbr_{+}$ functions $I_i(\vartheta_i,\theta_i)$ 
such that for any  $\varepsilon>0$, any compact  $\Theta_{c,i} \subseteq \Theta_i$ and some $r\ge 1$
\begin{equation}\label{rcompSup}
\begin{split}
\Upsilon^{**}_r(\varepsilon, \Theta_{c,i})& :=
\sum_{n=1}^\infty \, n^{r-1} \, \sup_{k \in \Zbb_+} \,\sup_{\theta_i\in \Theta_{c,i}} \Pb_{k,i,\theta_i}\brc{ \sup_{\vartheta_i \in\Theta_{c,i}} \left\vert\frac{1}{n}\lambda_{i,\vartheta_i}(k, k+n) - 
I_i(\vartheta_i,\theta_i)\right\vert > \varepsilon} <\infty
\\
& \quad \text{for all}~ i \in [N],
\end{split}
\end{equation}
then conditions $\C_3^{(i)}$, and hence, conditions $\C_2^{(i)}$ are satisfied with $I_{\Bc,\teb_\Bc}= \sum_{i\in\Bc} I_i(\theta_i,\theta_i)$ since
\begin{equation*}
\Pb_{k,i,\theta_i}\brc{\frac{1}{n} \inf_{|\vartheta_i - \theta_i|<\delta} \lambda_{i,\vartheta_i}(k,k+n) < I_{i,\theta_i}  - \varepsilon} \le 
\Pb_{k,i,\theta_i}\brc{\sup_{\vartheta_i \in\Theta_{c,i}} \left\vert\frac{1}{n}\lambda_{i,\vartheta_i}(k, k+n) - I_i(\vartheta_i,\theta_i)\right\vert > \varepsilon} .
\end{equation*}        
\end{remark}

Conditions $\C_3^{(i)}$ and \eqref{rcompSup} are useful for establishing asymptotic optimality of proposed detection procedures in particular examples.

%%__________________________
\section{Examples}\label{sec:Ex}

%%____________________________________________________________
\subsection{Detection of signals with unknown amplitudes in a multichannel system}\label{ssec:Ex1}
In this subsection, we consider the $N$-channel quickest detection problem, which is 
an interesting real-world example, arising in multichannel radar systems and electro-optic imaging systems where it is required to detect an unknown number of randomly appearing signals from
objects in clutter and noise (cf., e.g., \cite{TNB_book2014,Tartakovsky&Brown-IEEEAES08, Bakutetal-book63}).

Specifically,  
we are interested in the quickest detection of deterministic signals $\theta_i S_{i,n}$ with unknown amplitudes $\theta_i>0$ that appear at an unknown time $\nu$
in additive noises $\xi_{i,n}$ in an $N$-channel system, i.e., observations in the $i$th channel have the  form  
$$
X_{i,n}=\theta_i S_{i,n} \Ind{n > \nu}  +\xi_{i,n},\quad n \ge 1, ~ i =1,\dots,N .
$$
Assume that mutually independent noise processes $\{\xi_{i,n}\}_{n \in\Zbb_+}$ are $p$th order Gaussian autoregressive processes AR$(p)$ that obey recursions
\begin{equation}\label{sec:Ex.1}
\xi_{i,n} = \sum_{j=1}^p \beta_{i,j} \xi_{i,n-j} + w_{i,n}, \quad n \ge 1, 
\end{equation}
where   $\{w_{i,n}\}_{n\ge 1}$, $i=1,\dots,N$, are mutually independent i.i.d.\ normal $\Nc(0,\sigma_i^2)$ sequences ($\sigma_i>0$), so the observations in channels 
$X_{1,n},\dots,X_{N,n}$ are independent of each other. The initial values $\xi_{i,1-p}, \xi_{i,2-p}, \dots, \xi_{i,0}$ are arbitrary random or deterministic numbers, 
in particular we may set zero initial conditions $\xi_{i,1-p}=\xi_{i,2-p}=\cdots=\xi_{i,0}=0.$ 
The coefficients $\beta_{i,1},\dots,\beta_{i,p}$ and variances $\sigma_i^2$ are known and all roots of the equation $z^p -\beta_{i,1} z^{p-1} - \cdots - \beta_{i,p}=0$ are in the interior of the unit circle, so that 
the AR($p$) processes are stable.  

Define the $p_n$-th order residual 
\[
\widetilde{Y}_{i,n} = Y_{i,n}- \sum_{j=1}^{p_n} \beta_{i,j} Y_{i,n-j}, \quad n \ge 1,
\]
where $p_n =p$ if $n > p$ and $p_n =n$ if $n \le p$.  It is easy to see that the conditional pre-change and post-change densities in the $i$th channel are
\begin{align*}
g_i(X_{i,n}|\Xb_i^{n-1})& = f_{0,i}(X_{i,n}|\Xb_i^{n-1})= \frac{1}{\sqrt{2\pi \sigma_i^2}} \exp\set{-\frac{\wtX_{i,n}^2}{2\sigma_i^2}},
\\
f_{\theta_i}(X_{i,n}|\Xb_i^{n-1})&=   \frac{1}{\sqrt{2\pi \sigma_i^2}} \exp\set{-\frac{(\wtX_{i,n}-\theta_i \wtS_{i,n})^2}{2\sigma_i^2}} , \quad
\theta_i \in \Theta = (0, \infty),
\end{align*}
and that for all $k \in \Zbb_+$ and $n \ge 1$  the LLR in the $i$th channel has the form
$$
\lambda_{i,\theta_i}(k, k+n) = \frac{\theta_i}{\sigma_i^2}  \sum_{j=k+1}^{k+n} \wtS_{i,j} \wtX_{i,j} -\frac{\theta_i^2 \sum_{j=k+1}^{k+n} \wtS_{i,j}^2}{2 \sigma_i^2} .
$$
Since under measure $\Pb_{k,i,\vartheta_i} $ the random variables $\{\wtX_{i,n}\}_{n\ge k+1}$ are independent Gaussian random variables $\Nc(\vartheta_i \wtS_{i,n},\sigma_i^2)$, 
under  $\Pb_{k,i,\vartheta_i} $ the LLR $\lambda_{i,\theta_i}(k, k+n) $ is a Gaussian process (with independent non-identically distributed increments) with mean and variance
\begin{equation}\label{LLRAR1}
\Eb_{k,i,\vartheta} [\lambda_{i, \theta_i, \vartheta_i}(k, k+n)]= \frac{2 \theta_i \vartheta_i -\theta_i^2}{2\sigma_i^2} \sum_{j=k+1}^{k+n} \wtS_{i,j}^2 , \quad
\Var_{k,i,\vartheta_i} [\lambda_{i, \theta_i, \vartheta_i}(k, k+n)] =   \frac{\theta_i^2}{\sigma_i^2} \sum_{j=k+1}^{k+n} \wtS_{i,j}^2 .
\end{equation}

Assume that 
\begin{equation*}%\label{Qi}
\lim_{n\to \infty} \frac{1}{n}  \sup_{k \in \Zbb_+} \sum_{j=k+1}^{k+n} \wtS_{i,j}^2 = Q_i ,
\end{equation*}
where  $0<Q_i <\infty$. This is typically the case in most signal processing applications, e.g., in radar applications where the signals $\theta_i S_{i,n}$ are the sequences of harmonic pulses. 
Then for all $k\in\Zbb_+$ and $\theta_i\in (0,\infty)$
\[
\frac{1}{n}\lambda_{i,\theta_i}(k, k+n)  \xra[n\to\infty]{ \Pb_{k,i,\theta_i} -\text{a.s.}} \frac{\theta_i^2 Q_i}{2\sigma_i^2} =I_{i,\theta_i},
\]
so that condition $\C_1^{(i)}$ holds. Furthermore, since all moments of the LLR are finite it can be shown (cf.~\cite{TartakovskyIEEEIT2018})  that condition $\widetilde{\C}_2^{(i)}$ (and hence, condition 
$\C_2^{(i)}$) holds for all $r \ge 1$. 

Thus, by Corollary~\ref{Cor:Cor1}, the double-mixture procedure $T_A^W$ minimizes as $\alpha\to0$ all positive moments of the detection delay and  asymptotic formulas
\eqref{FOAODMSmomentsk} and \eqref{FOAODMSmoments} hold with $I_{\Bc,\teb_\Bc}=\sum_{i\in\Bc}  \frac{\theta_i^2 Q_i}{2\sigma_i^2}$.

\subsection{Detection of non-additive changes in mixtures} 
\label{ssec:Ex2}

Assume that the observations across streams are independent. Let $p_{1,i}(X_{i,n})$, $p_{2,i}(X_{i,n})$, and $f_{\theta_i}(X_{i,n})$ be distinct densities, $i=1,\dots,N$. Consider an example with non-additive changes where the observations in the $i$th stream 
in the normal mode follow the pre-change joint density 
\[
g_i(\Xb_i^{n}) =   \beta_i\prod_{j=1}^n p_{1,i}(X_{i,j}) + (1-\beta_i) \prod_{j=1}^n p_{2,i}(X_{i,j}),
\]
which is the mixture density with a mixing probability $0 < \beta_i < 1$, and in the abnormal mode the observations follow the post-change joint density
\[
f_{\theta_i}(\Xb_i^n)= \prod_{j=1}^n f_{\theta_i}(X_{i,j}) , \quad \theta_i\in \Theta_i. 
\]
Therefore, the observations $\{X_{i,n}\}_{n\ge 1}$ in the $i$th stream are dependent with the conditional probability density  
\[
g_i(X_{i,n} \mid \Xb_i^{n-1})  = \frac{\beta_i\prod_{j=1}^n p_{1,i}(X_{i,j}) + (1-\beta_i) \prod_{j=1}^n p_{2,i}(X_{i,j})}{\beta_i\prod_{j=1}^{n-1} p_{1,i}(X_{i,j}) + (1-\beta_i) \prod_{j=1}^{n-1} p_{2,i}(X_{i,j})}, 
\quad \nu > n
\]
before the change occurs and i.i.d.\ with  density $f_{\theta_i}(X_{i,n})$ after the change occurs ($n \ge \nu$). Note that in contrast to the previous example, pre-change densities $g_i$ do not belong to the 
same parametric family as post-change densities $f_{\theta_i}$.

Define $\Lc_{i,n}^{(s)}(\theta_i)= \log[f_{\theta_i}(X_{i,n})/p_{s,i}(X_{i,n})]$;  $I_{\theta_i}^{(s)} = \Eb_{0,\theta_i}[\Lc_{i,1}^{(s)}(\theta_i)]$, $s=1,2$;  $\Delta G_{i,n}=p_{1,i}(X_{i,n})/p_{2,i}(X_{i,n})$; 
$G_{i,n}=\prod_{j=1}^n\Delta G_{i,j}$; and $v_i=\beta_i/(1-\beta_i)$. It is easily seen that
$$
\frac{f_{\theta_i}(X_{i,n})}{g_i(X_{i,n}\mid \Xb_i^{n-1})} = \exp\set{\Lc_{i,n}^{(2)}(\theta_i)} \frac{1 -\beta_i + \beta_i G_{i,n-1}}{1-\beta_i+ \beta_i G_{i,n}}.
$$
Observing that 
$$
\prod_{j=k+1}^{k+n} \frac{1 -\beta_i + \beta_i G_{i,j-1}}{1 -\beta_i + \beta_i G_{i,j}} =  \frac{1+ v_i G_{i,k}}{1+ v_i G_{i,k+n}},
$$
we obtain
\[
\prod_{j=k+1}^{k+n} \frac{f_{\theta_i}(X_{i,j})}{g_i(X_{i,j}\mid \Xb_i^{j-1})} = \exp\set{\sum_{j=k+1}^{k+n} \Lc_{i,j}^{(2)}(\theta_i)}  \frac{1+ v_i G_{i,k}}{1+ v_i G_{i,k+n}},
\]
and therefore,
\begin{equation} 
\label{LLRmix}
\lambda_{i,\theta_i}(k,k+n) = \sum_{j=k+1}^{k+n} \Lc_{i,j}^{(2)}(\theta_i) + \log \frac{1+ v_i G_{i,k}}{1+ v_i G_{i,k+n}}.
\end{equation}

Assume that $I_{\theta_i}^{(1)} > I_{\theta_i}^{(2)}$. Then $\Eb_{k, i,\theta_i} [\log \Delta G_{in}] = I_{\theta_i}^{(2)}-I_{\theta_i}^{(1)} <0$ for $k \le n$, and hence, for all $k \in \Zbb_+$
$$
G_{i,k+n} = G_{i,k} \prod_{j=k+1}^{k+n} \Delta G_{i,j} \xra[n\to\infty]{\Pb_{k,i,\theta_i}\mbox{-a.s.}} 0
$$
and
\[
\frac{1}{n} \log \frac{1+ v_i G_{i,k}}{1+ v_i G_{i,k+n}} \xra[n\to\infty]{\Pb_{k,i,\theta_i}\mbox{-a.s.}} 0.
\]
Since under $\Pb_{k,i,\theta_i}$  the random variables $\Lc_{i,n}^{(2)}(\theta_i)$, $n=  k+1, k+2, \dots $ are i.i.d.\  with mean $I_{\theta_i}^{(2)}$,  we have
\[
\frac{1}{n} \lambda_{i,\theta_i}(k,k+n)    \xra[n\to\infty]{\Pb_{k, i,\theta_i}-\text{a.s.}} I_{\theta_i}^{(2)},
\]
and hence, condition $\C_1^{(i)}$ holds with $I_{i,\theta_i}= I_{\theta_i}^{(2)}$.

Now, under $\Pb_{k,i,\theta_i}$ the LLR $\lambda_{i,\vartheta_i}(k,k+n)$ can be written as
\[
\lambda_{i,\vartheta_i}(k,k+n) =  \sum_{j=k+1}^{k+n} \Lc_{i,j}^{(2)}(\vartheta_i, \theta_i) + \psi_i(k,n),
\] 
where $\Lc_{i,j}^{(2)}(\vartheta_i, \theta_i)$ is the statistic $\Lc_{i,j}^{(2)}(\vartheta_i)$ under $\Pb_{k,i,\theta_i}$ and 
\[
\psi_i(k,n) = \log \frac{1+ v_i G_{i,k}}{1+ v_i G_{i,k+n}} \le \log (1+ v_i G_{i,k}) = \psi_{i,k}^\star 
\]
for any $n \ge 1$. Since $\psi_{i,k}^\star \ge 0$ and $\{\Lc_{i,j}^{(2)}(\vartheta_i)\}_{j > k}$ are i.i.d.\ under  $\Pb_{k,i,\theta_i}$, we have
\begin{align*}
\Pb_{k,i,\theta_i}\brc{\frac{1}{n} \inf_{\vartheta_i\in \Gamma_{\delta,\theta_i}} \lambda_{\vartheta_i}(k,k+n) < I_{i,\theta_i}  - \varepsilon} & \le
\Pb_{k,i,\theta_i}\brc{\frac{1}{n} \inf_{\vartheta_i\in \Gamma_{\delta,\theta_i}} \sum_{j=k+1}^{k+n} \Lc_{i,j}^{(2)}(\vartheta_i)   < I_{i,\theta_i}  - \varepsilon  -  \frac{1}{n} \psi_{i,k}^\star}
\\
& \le \Pb_{k,i,\theta_i}\brc{\frac{1}{n} \inf_{\vartheta_i\in \Gamma_{\delta,\theta_i}} \sum_{j=k+1}^{k+n} \Lc_{i,j}^{(2)}(\vartheta_i)  <  I_{i,\theta_i}  - \varepsilon} 
\\
& = \Pb_{0,i,\theta_i}\brc{\frac{1}{n} \inf_{\vartheta_i\in \Gamma_{\delta,\theta_i}} \sum_{j=1}^{n} \Lc_{i,j}^{(2)}(\vartheta_i)  <  I_{i,\theta_i}  - \varepsilon} 
\end{align*}
and, consequently, conditions $\C_2^{(i)}$ are satisfied as long as
\begin{equation}\label{rcompLeftR}
\sum_{n=1}^\infty \, n^{r-1} \, \sup_{\theta_i \in \Theta_{i,c}}  
\Pb_{0,i,\theta_i}\brc{\frac{1}{n} \inf_{\vartheta_i\in \Gamma_{\delta,\theta_i}}  \sum_{j=1}^{n} \Lc_{i,j}^{(2)}(\vartheta_i) < I_{i,\theta_i}  - \varepsilon} <\infty .
\end{equation}
Typically condition \eqref{rcompLeftR} holds if the $(r+1)$th absolute moment of $\Lc_{i,1}^{(2)}(\vartheta_i)$ is finite, 
$\Eb_{0, i,\theta_i}|\Lc_{i,1}^{(2)}(\vartheta_i)|^{r+1}<\infty$. 

For example, let us consider the following Gaussian model: 
\begin{align*}
f_{i,\theta_i}(x) &= \frac{1}{\sqrt{2\pi \sigma_i^2}} \exp\set{\frac{(x- \theta_i)^2}{2\sigma_i^2}}, \quad p_{s,i}(x)=  \frac{1}{\sqrt{2\pi \sigma_i^2}} \exp\set{\frac{(x- \mu_{i,s})^2}{2\sigma_i^2}}, \quad s=1,2,
\end{align*}
where $\theta_i>0$,  $\mu_{i,1}>\mu_{i,2}=0$. Then 
\[
\Lc_{i,n}^{(s)}(\theta_i)= \frac{\theta_i-\mu_{i,s}}{\sigma_i^2} X_{i,n} - \frac{(\theta_i-\mu_{i,s})^2}{2 \sigma_i^2},  
\]
$I_{i,\theta_i}^{(s)} =  (\theta_i-\mu_{i,s})^2/2\sigma_i^2$, $I_{i,\theta_i}^{(2)}> I_{i,\theta_i}^{(1)}$ and
\[
\frac{1}{n} \sum_{j=1}^n \Lc_{i,j}^{(2)}(\vartheta_i,\theta_i) =  \frac{\vartheta_i \theta_i-\vartheta_i^2/2}{\sigma_i^2}    + \frac{\vartheta_i}{\sigma_i n} \sum_{j=1}^{n}\eta_{i,j},
\]
where $\eta_{i,j}\sim \Nc(0,1)$, $j=1,2,\dots$ are i.i.d.\ standard normal random variables. Since all moments of $\eta_{i,j}$ are finite, by the same argument as in the previous example, 
condition \eqref{rcompLeftR} holds for all $r\ge 1$, and hence,
the detection rule $T_{A_\alpha}^W$ is asymptotically optimal  as $\alpha\to0$, minimizing all positive moments of the detection delay.

%%_________________________________________-
\section{Discussion and remarks}\label{sec:Remarks}

1. Note that condition $\C_{1}$  holds whenever $\lambda_{\Bc,\theta}(k, k+n)/n$ converges almost surely to $I_{\Bc,\theta}$ under $\Pb_{k, \Bc,\theta}$,
\begin{equation}\label{sec:MaRe.1}
\frac{1}{n}\lambda_{\Bc,\theta}(k,k+n) \xra[n\to\infty]{\Pb_{k, \Bc,\theta}-\text{a.s.}} I_{\Bc,\theta}
\end{equation}
(cf.\ Lemma~A.1 in \cite{FellourisTartakovsky-IEEEIT2017}). However, the a.s.\ convergence is not sufficient for asymptotic optimality of the detection procedures with respect 
to moments of the detection delay.
In fact, the average detection delay may even be infinite under the a.s.\ convergence \eqref{sec:MaRe.1}. The left-tail condition $\C_2$ guarantees finiteness of first $r$ moments of the detection delay
and asymptotic optimality of the detection procedures in Theorem~\ref{Th:FOAODMS}, Theorem~\ref{Th:AoptDSR}, and Theorem~\ref{Th:FOasopt_pureBayes}. 
Note also that the uniform $r$-complete convergence conditions for  $n^{-1}\lambda_{\Bc,\theta}(k, k+n)$ and $n^{-1} \log \Lambda_{\pb,W}(k,k+n)$ to $I_{\Bc,\theta}$ under $\Pb_{k,\Bc,\theta}$, 
i.e., when for all $\varepsilon >0$, $\Bc\in \Pc$, and $\theta\in\Theta$
\begin{align*}
& \sum_{n=1}^\infty \, n^{r-1} \, \sup_{k \in \Zbb_+} \Pb_{k,\Bc,\theta}\brc{\abs{\frac{1}{n} \lambda_{\Bc,\theta}(k, k+n)-I_{\Bc,\theta}} > \varepsilon} <\infty , 
\\
&\sum_{n=1}^\infty \, n^{r-1} \, \sup_{k \in \Zbb_+} \Pb_{k, \Bc,\theta}\brc{\abs{\frac{1}{n}  \log \Lambda_{\pb,W}(k,k+n)-I_{\Bc,\theta}} > \varepsilon} <\infty ,
\end{align*}
are sufficient for asymptotic optimality results presented in Theorems~\ref{Th:FOAODMS}--\ref{Th:FOasopt_pureBayes}.  However, on the one hand these conditions are stronger than conditions 
$\C_1$ and $\C_2$, and on the other hand, verification of the $r$-complete convergence conditions is more difficult than checking conditions $\C_1$ and $\C_2$ for the local values of 
the LLR in the vicinity of the true parameter value, which is especially true for the weighted LLR $\log \Lambda_{\pb,W}(k,k+n)$. Still the  $r$-complete convergence conditions are intuitively 
appealing since they define the rate of convergence in the strong law of large numbers \eqref{sec:MaRe.1}.

2.  Even for independent streams the computational complexity and memory requirements of the procedures $T_A^W$ and $\wtT_A^W$ can be quite high. For this reason, in practice, 
it is reasonable to use window-limited versions of double-mixture detection procedures where the summation over potential change points $k$ is restricted to the sliding window of size 
$m=m_1-m_0$. The idea of using a window-limited generalized likelihood ratio procedure for stochastic dynamic systems described by linear state-space models belongs to 
Willsky and Jones \cite{willsky-ac76}, and a general (mostly minimax) single-stream quickest changepoint detection theory for window-limited CUSUM-type procedures based on the maximization over 
$k$ restricted to $n - m_1 \le k \le n$ was developed by Lai~\cite{LaiJRSS95,LaiIEEE98} who suggested a method of selection of $m_1$ (depending on the given false alarm rate) 
to make the detection procedures asymptotically optimal. The role of $m_1$ is to reduce the memory requirements and computational complexity of stopping rules. 
The values of $m_0$ bigger than $0$ can be used to protect against outliers, but $m_0=0$ looks reasonable in most cases. To be more specific, in the window-limited versions of 
$T_A^W$ and $\wtT_A^W$, defined in \eqref{DMS_def} and \eqref{DMSR_def}, the statistics $S_{\pb,W}^{\pi}(n)$ and $R_{\pb,W}(n)$ are replaced by the window-limited statistics
\begin{align*}
\widehat{S}_{\pb,W}^{\pi}(n) & = S_{\pb,W}^{\pi}(n) \quad \text{for}~n \le m_1; 
\\
\widehat{S}_{\pb,W}^{\pi}(n) & = \frac{1}{\Pb(\nu \ge n)} \sum_{k=n-(m_1+1)}^{n-1} \pi_k \Lambda_{\pb,W}(k,n) \quad \text{for}~n > m_1
\end{align*}
and
\begin{align*}
\widehat{R}_{\pb,W}(n)&= R_{\pb,W}(n)  \quad \text{for}~n \le m_1;
\\
\widehat{R}_{\pb,W}(n)& =  \sum_{k=n-(m_1+1)}^{n-1}  \Lambda_{\pb,W}(k,n) \quad \text{for}~n > m_1.
\end{align*}
Following guidelines of Lai~\cite{LaiIEEE98}, it can be shown that these window-limited versions also have first-order asymptotic optimality properties as long as the size of the 
window $m_1(A)$ approaches infinity as $A\to\infty$ with $m_1(A)/\log A \to \infty$ but $\log m_1(A)/\log A \to 0$. Since thresholds $A=A_\alpha$ in detection procedures should be selected 
in such a way that $\log A_\alpha \sim |\log \alpha|$ as $\alpha\to0$, it follows that the value of the window size $m_1(\alpha)$ should satisfy
\[
\lim_{\alpha\to0} \frac{m_1(\alpha)}{|\log \alpha|} =\infty, \quad  \lim_{\alpha\to0} \frac{\log m_1(\alpha)}{|\log \alpha|} =0.
\]

3. It is expected that first-order approximations to the moments of the detection delay are inaccurate in most cases, so higher-order approximations are in order. However, it is not feasible to 
obtain such approximations in the general non-i.i.d.\ case considered in Part 1 of the article. The author is currently working on the companion paper ``Asymptotically Optimal Quickest Change 
Detection in Multistream Data---Part 2: Higher-Order Approximations to Operating Characteristics in the i.i.d.\ Case,'' where we will derive higher-order approximations to the expected delay to detection 
and the probability of false alarm in the ``i.i.d.'' scenario, assuming that the observations in streams are independent and also independent across streams. The results of the renewal theory and 
nonlinear renewal theory will be used for this purpose. In the companion paper, we will also study the accuracy of asymptotic approximations and compare several detection schemes using MC simulations.

%%_________________________________________
%\section*{Acknowledgement}

%The author would like to thank ......

%%_________________________________________________________
\appendix

\renewcommand{\theequation}{A.\arabic{equation}}
\setcounter{equation}{0}

\renewcommand{\thelemma}{A.\arabic{lemma}}
\setcounter{lemma}{0}

%%_________________________________________________________
\section*{Appendix: An Auxiliary Lemma and Proofs}

The following lemma is extensively used for obtaining upper bounds for the moments of the detection delay, which are needed for proving asymptotic optimality properties of 
the introduced detection procedures. In this lemma, $\Pb$ is a generic probability measure and $\Eb$ is a corresponding expectation.

%%Lemma
\begin{lemma}\label{LemmaA1}
Let $\tau$ ($\tau=0,1,\dots$) be a non-negative integer-valued random variable and let $N$ ($N \ge 1$) be an integer number. Then, for any $r \ge 1$,
\begin{equation}\label{upperineqtau}
 \Eb[\tau]^r \le N^{r} + r 2^{r-1} \sum_{n=N}^{\infty}  n^{r-1}   \Pb\brc{\tau >  n}.
 \end{equation}
\end{lemma}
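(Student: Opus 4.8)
The plan is to reduce the statement to the classical tail-sum representation of moments of a $\Zbb_+$-valued random variable and then carry out an elementary telescoping estimate. First I would record that, since $\tau$ takes values in $\Zbb_+$, the telescoping identity $k^r=\sum_{n=0}^{k-1}\bl[(n+1)^r-n^r\br]$ combined with Tonelli's theorem (all summands are non-negative) gives
\[
\Eb[\tau^r]=\sum_{k=0}^\infty \Pb(\tau=k)\sum_{n=0}^{k-1}\bl[(n+1)^r-n^r\br]=\sum_{n=0}^\infty \bl[(n+1)^r-n^r\br]\,\Pb(\tau>n).
\]
Here $r\ge 1$ need not be an integer; only $\tau$ is integer-valued, so the identity is unaffected.

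Next I would split this series at the index $N$. For the initial block $0\le n\le N-1$ I bound $\Pb(\tau>n)\le 1$ and telescope to obtain $\sum_{n=0}^{N-1}\bl[(n+1)^r-n^r\br]=N^r$. For the tail block $n\ge N$ I invoke the elementary inequality, valid for every integer $n\ge 1$ and every $r\ge 1$,
\[
(n+1)^r-n^r\le r(n+1)^{r-1}\le r\,2^{r-1}n^{r-1},
\]
where the first step is the mean value theorem applied to $x\mapsto x^r$ on $[n,n+1]$ (equivalently, convexity of $x\mapsto x^r$ when $r\ge 1$) and the second uses $n+1\le 2n$ together with $r-1\ge 0$. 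This yields $\sum_{n=N}^\infty \bl[(n+1)^r-n^r\br]\Pb(\tau>n)\le r\,2^{r-1}\sum_{n=N}^\infty n^{r-1}\Pb(\tau>n)$, and adding the two blocks gives \eqref{upperineqtau}.

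The argument is essentially bookkeeping and I do not expect a genuine obstacle; the only points needing a little care are that the increment bound $(n+1)^r-n^r\le r\,2^{r-1}n^{r-1}$ is applied only for $n\ge N\ge 1$ (it can fail at $n=0$ when $r>1$, but that index lies in the initial block, which is controlled by the crude bound $\Pb(\tau>n)\le 1$), and that the tail-sum identity is used with a possibly non-integer exponent $r\ge 1$, which is legitimate because it relies only on $\tau$ being $\Zbb_+$-valued.
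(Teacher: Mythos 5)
Your proof is correct and is essentially the paper's argument: the paper starts from the continuous representation $\Eb[\tau^r]=\int_0^\infty r t^{r-1}\Pb(\tau>t)\,\mrm{d}t$ and discretizes it over the intervals $[N+n,N+n+1]$, whereas you invoke the discrete tail-sum identity directly, but both routes arrive at $N^r+\sum_{n=N}^\infty[(n+1)^r-n^r]\Pb(\tau>n)$ and finish with the identical increment bound $(n+1)^r-n^r\le r(n+1)^{r-1}\le r\,2^{r-1}n^{r-1}$ for $n\ge N\ge 1$. Your remark that this increment bound is only used for $n\ge 1$, with the initial block handled crudely by $\Pb(\tau>n)\le 1$, matches the role of the $N^r$ term in the paper's proof, so there is nothing to fix.
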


%%Proof
\proof
\begin{align*}
& \Eb\brcs{\tau}^r   = \int_0^\infty r t^{r-1} \Pb\brc{\tau > t} \, \mrm{d} t   \nonumber
\\
& \le N^r + \sum_{n=0}^{\infty} \int_{N+n}^{N+n+1} r t^{r-1}  \Pb \brc{\tau > t} \, \mrm{d} t  \nonumber
\\
& \le  N^r + \sum_{n=0}^{\infty} \int_{N+n}^{N+n+1} r t^{r-1}  \Pb\brc{\tau > N+n} \, \mrm{d} t  \nonumber
\\
& = N^r + \sum_{n=0}^{\infty} [(N+n+1)^r- (N+n)^r ] \Pb\brc{\tau > N+n} \nonumber
\\
& = N^r + \sum_{n=N}^{\infty} [(n+1)^r-n^r]  \Pb\brc{\tau > n}  \nonumber
\\
 & \le  N^{r} +\sum_{n=N}^{\infty}   r (n+1)^{r-1}   \Pb\brc{\tau >  n} \nonumber
 \\
 & \le N^{r} + r 2^{r-1} \sum_{n=N}^{\infty}  n^{r-1}   \Pb\brc{\tau >  n}.
 \end{align*}
\endproof

%%Proof
\begin{proof}[Proof of Proposition~\ref{Lem:AOCDMS}]
To prove asymptotic approximations \eqref{MomentskDMS} and \eqref{MomentsDMS} note first that by \eqref{PFADMSineq} the detection procedure $T_A^W$ belongs to class $\mbb{C}(1/(A+1))$,
so replacing $\alpha$ by $1/(A+1)$ in the asymptotic lower bounds \eqref{LBkinclass} and \eqref{LBinclass}, we obtain that under the right-tail condition $\C_1$ 
the following asymptotic lower bounds hold for all $r>0$, $\Bc\in \Pc$, and $\theta\in\Theta$:
\begin{align}
\label{LBkTAW}
 \liminf_{A\to\infty} \frac{\Rc_{k, \Bc,\theta}^r(T_A^W)}{(\log A)^r} & \ge \frac{1}{(I_{\Bc,\theta} +\mu)^r} , \quad k \in \Zbb_+,
\\
 \liminf_{A\to\infty} \frac{\Rca_{\Bc,\theta}^r(T_A^W)}{(\log A)^r} & \ge \frac{1}{(I_{\Bc,\theta} +\mu)^r} . 
\label{LBTAW}
\end{align}
Therefore, to prove the assertions of the proposition it suffices to show that, under the left-tail condition $\C_2$, for all $0 < m \le r$, $\Bc\in\Pc$, and $\theta\in\Theta$
\begin{align}
\label{UBkTAW}
 \limsup_{A\to\infty} \frac{\Rc_{k, \Bc,\theta}^m(T_A^W)}{(\log A)^m} & \le \frac{1}{(I_{\Bc,\theta} +\mu)^m} ,  \quad k \in \Zbb_+,
\\
 \limsup_{A\to\infty} \frac{\Rca_{\Bc,\theta}^m(T_A^W)}{(\log A)^m} & \le \frac{1}{(I_{\Bc,\theta} +\mu)^m} .
\label{UBTAW}
\end{align}

The proof of part (i). Let $\pi^A=\{\pi_k^A\}$, $\pi_k^A=\pi_k^{\alpha}$ for $\alpha=\alpha_A = 1/(1+A)$, and define
\begin{equation*}%\label{N_A}
N_A=N_{A}(\varepsilon,\Bc,\theta)=1+\left \lfloor \frac{\log (A/\pi_k^A)}{I_{\Bc,\theta}+\mu-\varepsilon} \right \rfloor .
\end{equation*}

Obviously,  for any $n \ge 1$,
\begin{align*}
\log S_{\pb,W}^\pi(k+n) & \ge  \log \Lambda_{\pb,W}(k,k+n) + \log \pi_k^A -\log \Pi_{k-1+n}^A 
\\
& \ge \inf_{\vartheta \in \Gamma_{\delta,\theta}} \lambda_{\Bc,\vartheta}(k, k+n) +\log W(\Gamma_{\delta,\theta}) +\log p_\Bc  +\log \pi_k^A -\log \Pi_{k-1+n}^A , 
\end{align*}
where $\Gamma_{\delta,\theta}=\{\vartheta\in\Theta\,:\,\vert\vartheta-\theta\vert<\delta\}$, so that for any $\Bc\in\Pc$, $\theta\in\Theta$, $k\in\Zbb_+$
\begin{align*}
&\Pb_{k, \Bc,\theta}\brc{T_A^W-k >n} \le \Pb_{k, \Bc,\theta}\set{\frac{1}{n} \log S_{\pb,W}^\pi(k+n) < \frac{1}{n} \log A }\nonumber
%\\
%&\le \Pb_{k, \Bc,\theta}\set{\frac{\log \Lambda_{\pb,\theta}(k,k+n)}{n}  < \frac{1}{n} \log \brc{\frac{A}{\pi_k^A}} - \frac{|\log\Pi_{k-1+n}^A|}{n} } \nonumber
\\
& \le \Pb_{k, \Bc,\theta}\set{\frac{1}{n} \inf_{\vartheta \in \Gamma_{\delta,\theta}} \lambda_{\Bc,\vartheta}(k, k+n)  <  \frac{1}{n} \log \brc{\frac{A}{\pi_k^A}} +\frac{1}{n}  \brcs{|\log\Pi_{k-1+n}^A| + 
\log W(\Gamma_{\delta,\theta}) +\log p_\Bc}} .
%I_{\Bc,\theta} + \mu -\varepsilon -\frac{1}{n}\brc{|\log\Pi_{k-1+n}^A| + \log p_\Bc}} .
\end{align*}
It is easy to see that for $n\ge N_{A}$ the last probability does not exceed the probability
\[
\Pb_{k, \Bc,\theta}\set{\frac{1}{n} \inf_{\vartheta \in \Gamma_{\delta,\theta}} \lambda_{\Bc,\vartheta}(k, k+n) <  I_{\Bc,\theta} +\mu - \varepsilon +\frac{1}{n}  \brc{|\log\Pi_{k-1+n}^A| + 
\log W(\Gamma_{\delta,\theta}) +\log p_\Bc}}.
\]
Since, by condition $\mb{CP 1}$, $N_{A}^{-1} |\log\Pi_{k-1+N_A}^A| \to \mu$ as $A\to \infty$, for a sufficiently large value of $A$ there exists a small 
$\kappa=\kappa_A$ ($\kappa_A\to 0$ as $A\to\infty$) such that 
\begin{equation}\label{logPikappa}
\left |\mu - \frac{|\log\Pi_{k-1+N_A}^A|}{N_{A}} \right | < \kappa.
\end{equation} 
Hence, for $\varepsilon_1 >0$ and all sufficiently large $A$ and $n$, we have
\begin{align}
 \Pb_{k, \Bc,\theta}\brc{T_A^W-k >n} & \le \Pb_{k,\Bc}\set{\frac{1}{n}  \inf_{\vartheta \in \Gamma_{\delta,\theta}} \lambda_{\Bc,\vartheta}(k, k+n)   < I_{\Bc,\theta}  - \varepsilon- \kappa - 
\frac{1}{n}\brcs{\log p_\Bc + \log W(\Gamma_{\delta,\theta})}}
\nonumber
\\
&  \le \Pb_{k,\Bc,\theta}\set{\frac{1}{n}  \inf_{\vartheta \in \Gamma_{\delta,\theta}} \lambda_{\Bc,\vartheta}(k, k+n)  < I_{\Bc,\theta}  - \varepsilon_1}. \label{ProbkTAW}
\end{align}
By Lemma~\ref{LemmaA1}, for any $k\in\Zbb_+$,  $\Bc\in\Pc$, and $\theta\in\Theta$ we have the following inequality
\begin{align}\label{EkTAWineq}
 \Eb_{k,\Bc,\theta}\brcs{(T_A^W-k)^+}^r    \le N_A^{r} + r 2^{r-1} \sum_{n=N_A}^{\infty} n^{r-1}   \Pb_{k,\Bc,\theta}\brc{T_A^W-k >  n},
 \end{align}
 which along with \eqref{ProbkTAW} yields
\begin{align}
\Eb_{k,\Bc,\theta}\brcs{\brc{T_A^W-k}^+}^r
 \le   \brc{1+ \left \lfloor\frac{ \log (A/\pi_k^A)}{I_{\Bc,\theta}+\mu-\varepsilon}\right \rfloor}^r + r 2^{r-1} \, \Upsilon_r(\varepsilon_1,\Bc, \theta). 
 \label{EkTAWkupper}
 \end{align}

Now, note that
\[
\PFA(T_A^W) \ge \sum_{i=k}^\infty \pi_i^A \Pb_\infty(T_A^W \le i) \ge \Pb_\infty(T_A^W \le k) \sum_{i=k}^\infty \pi_i^A = \Pb_\infty(T_A^W \le k) \Pi_{k-1}^A ,
\]
and hence,
\begin{equation}\label{PFATAWge}
 \Pb_\infty(T_A^W > k) \ge 1- \PFA(T_A^W)/\Pi_{k-1}^A \ge 1- [(A+1) \Pi_{k-1}^A]^{-1} , \quad k \in \Zbb_+.
\end{equation}
Recall that we set $\Pi_k^A=\Pi^\alpha_k$ with $\alpha=\alpha_A =1/(1+A)$. It follows from \eqref{EkTAWkupper} and \eqref{PFATAWge} that 
 \begin{align}
  \Rc_{k, \Bc,\theta}^r(T_A^W)& =\frac{\Eb_{k, \Bc,\theta}\brcs{\brc{T_A^W-k}^+}^r}{\Pb_\infty(T_A^W > k)}  \nonumber
  \\
  &\le   \frac{\brc{1+\left \lfloor \frac{\log (A/\pi_k^A)}{I_{\Bc,\theta}+\mu-\varepsilon} \right \rfloor}^r + r 2^{r-1} \, \Upsilon_r(\varepsilon_1,\Bc, \theta)}{1- 1/(A \Pi^A_{k-1})}. 
  \label{RckupperTAW}
 \end{align}
Since by condition $\C_2$, $\Upsilon_{r}(\varepsilon_1,\Bc, \theta) < \infty$ for all $\Bc\in\Pc$, $\theta\in\Theta$, and $\varepsilon_1 >0$  and, by condition $\mb{CP} {\mb 3}$,
$(A\Pi_{k-1}^A)^{-1} \to 0$, $|\log \pi_k^A|/\log A \to 0$ as $A\to\infty$, inequality \eqref{RckupperTAW} implies the asymptotic inequality
\[
\Rca_{k,\Bc,\theta}^r(T_A^W) \le \brc{\frac{\log A}{I_{\Bc,\theta}+\mu-\varepsilon}}^r (1+o(1)), \quad A \to \infty.
\]
Since $\varepsilon$ can be arbitrarily small, this implies the asymptotic upper bound  \eqref{UBkTAW} 
(for all $0<m \le r$, $\Bc\in\Pc$, and $\theta\in\Theta$).  This upper bound and the lower bound \eqref{LBkTAW} prove the asymptotic relation \eqref{MomentskDMS}.  The proof of (i) is complete.

The proof of part (ii). Using the inequalities \eqref{RckupperTAW} and $1-\PFA(T_A^W) \ge A/(1+A)$, we obtain that for any $0<\varepsilon < I_{\Bc,\theta}+\mu$
\begin{equation}\label{UpperRcaTAW}
\begin{split}
&\Rca_{\Bc,\theta}^r(T_A^W) = \frac{\sum_{k=0}^\infty \pi_k^A  \Eb_{k,\Bc,\theta}\brcs{(T_A^W-k)^+}^r}{1-\PFA(T_A^W)} 
\\
&\le\frac{{\displaystyle\sum_{k=0}^\infty}  \pi_k^A \brc{1+\left \lfloor\frac{\log (A/\pi_k^A)}{I_{\Bc,\theta}+\mu-\varepsilon} \right \rfloor}^r + 
r 2^{r-1} \, \Upsilon_r(\varepsilon_1,\Bc,\theta)}{A/(1+A)} .
\end{split}
\end{equation}
By condition $\C_2$, $\Upsilon_r(\varepsilon_1,\Bc,\theta) < \infty$ for any $\varepsilon_1 >0$,   $\Bc\in\Pc$, and $\theta\in\Theta$
and, by condition $\mb{CP} {\mb 2}$, $\sum_{k=0}^\infty \pi_k^A |\log\pi_k^A|^r =o(|\log A|^r)$ as $A\to\infty$, which implies that for all $\Bc\in\Pc$ and $\theta\in\Theta$
\[
\Rca_{\Bc,\theta}^r(T_A^W) \le \brc{\frac{\log A}{I_{\Bc,\theta}+\mu - \varepsilon}}^r (1+o(1)), \quad A \to \infty.
\]
Since $\varepsilon$ can be arbitrarily small, the asymptotic upper bound  \eqref{UBTAW} follows and the proof of the asymptotic approximation \eqref{MomentsDMS} is complete.  
\end{proof}

%%Proof
\begin{proof}[Proof of Proposition~\ref{Lem:AOCDMSR}]
As before, $\pi_k^A=\pi_k^{\alpha_A}$, so $\bar\nu_A=\bar\nu_{\alpha_A}$  and $\omega_A=\omega_{\alpha_A}$, where  $|\log \alpha_A|\sim \log A$. 

For $\varepsilon\in(0,1)$, let 
\begin{equation*}%\label{MA}
M_A=M_{A}(\varepsilon,\Bc,\theta) = (1-\varepsilon) \frac{\log A}{I_{\Bc,\theta}} . 
\end{equation*}
Recall that
\[
\Pb_{k, \Bc,\theta}(\wtT_A^W>k)=\Pb_\infty (\wtT_A^W>k) \ge 1- \frac{k+ \omega_A} {A} , \quad k \in \Zbb_+
\]
(see \eqref{PFADoobineqDM}), so using Chebyshev's inequality, we obtain
\begin{align} \label{LBSRA}
\Rc_{k,\Bc,\theta}^r(\wtT_A^W)
& \ge M_{A}^r \Pb_{k, \Bc,\theta}(\wtT_A^W  -k > M_{A})  \nonumber
\\
&\ge M_{A}^r\brcs{\Pb_{k,\Bc,\theta}(\wtT_A^W>k) - \Pb_{k, \Bc,\theta}(k <  \wtT_A^W < k+M_{A})}  \nonumber
\\
&\ge  M_{A}^r\brcs{1- \frac{\omega_A  +k } {A} - \Pb_{k, \Bc,\theta}(k <  \wtT_A^W < k+M_{A})}.
\end{align}

Analogously to \eqref{PkTupper},
\begin{equation}\label{Pktauupper}
 \Pb_{k, \Bc,\theta}\brc{k <  T < k+ M_{A}} \le  U_{M_A,k}( T)  + \beta_{M_A,k}(\varepsilon, \Bc,\theta).
\end{equation}
 Since 
\[
\begin{aligned}
\Pb_\infty\brc{0 < \wtT_A^W - k <M_{A}} & \le \Pb_\infty\brc{\wtT_A^W  < k+ M_{A}} 
\\
&\le (k+\omega_A + M_{A})/A,
\end{aligned}
\]
we have
\begin{equation}\label{UpperU}
 U_{M_A,k}(\wtT_A^W) \le \frac{k+ \omega_A+(1-\varepsilon) I_{\Bc,\theta}^{-1} \log A}{A^{\varepsilon^2}}.
\end{equation}
By condition \eqref{Prior4},
\begin{equation}\label{PriorA}
\lim_{A\to \infty} \frac{{\log (\omega_A+\bar\nu_A)}}{\log A} = 0,
\end{equation} 
which implies that $\omega_A=o(A^\gamma)$ as $A\to\infty$ for any $\gamma>0$.
Therefore,  $U_{M_A,k}(\wtT_A^W)\to 0$ as $A\to\infty$ for any fixed $k$. Also, $\beta_{M_A,k}(\varepsilon,\Bc,\theta)\to 0$ by condition $\C_1$, so that 
$\Pb_{k, \Bc,\theta}\brc{0 < \wtT_A^W -k < M_{A}}\to0$ for any fixed $k$. It follows from \eqref{LBSRA}  that for an arbitrary $\varepsilon \in (0,1)$ as $A\to\infty$
\begin{equation*}
\Rc_{k,\Bc,\theta}^r(\wtT_A^W) \ge \brc{\frac{(1-\varepsilon) \log A}{I_{\Bc,\theta}}}^r (1+o(1)),
\end{equation*}
which yields the asymptotic lower bound (for any fixed $k\in\Zbb_+$, $\Bc\in\Pc$, and $\theta\in\Theta$)
\begin{equation}\label{LBSRAasympt}
\liminf_{A\to\infty}\frac{\Rc_{k,\Bc,\theta}^r(\wtT_A^W)}{(\log A)^r} \ge \frac{1}{I_{\Bc,\theta}^r} .
\end{equation}

To prove \eqref{MomentskDMSR} it suffices to show that this bound is attained by $\wtT_A^W$, i.e., 
\begin{equation}\label{UBkTDSR}
 \limsup_{A\to\infty} \frac{\Rc_{k, \Bc,\theta}^r(\wtT_A^W)}{(\log A)^r}  \le \frac{1}{I_{\Bc,\theta}^r} .
\end{equation}

Define 
\begin{equation*}%\label{tildeMA}
\widetilde{M}_{A}= \widetilde{M}_{A}(\varepsilon,\Bc,\theta)=1+\left\lfloor \frac{\log A}{I_{\Bc,\theta}-\varepsilon} \right\rfloor.
\end{equation*}
By Lemma~\ref{LemmaA1}, for any $k\in\Zbb_+$, $\Bc\in\Pc$, and $\theta\in\Theta$, 
\begin{align}\label{Ektildetauineq}
& \Eb_{k, \Bc,\theta}\brcs{(\wtT_A^W-k)^+}^r   \le 
 \widetilde{M}_{A}^{r} + r 2^{r-1} \sum_{n= \widetilde{M}_{A}}^{\infty}  n^{r-1}   \Pb_{k, \Bc,\theta}\brc{\wtT_A^W >  n},
 \end{align}
 and since for any $n \ge 1$,
\[
\log R_{\pb,W}^\pi(k+n) \ge  \log \Lambda_{\pb,W}(k,k+n)  \ge  \inf_{\vartheta \in \Gamma_{\delta,\theta}} \lambda_{\Bc,\vartheta}(k, k+n) + \log W(\Gamma_{\delta,\theta})+\log p_\Bc ,
\] 
in just the same way as in the proof of Proposition~\ref{Lem:AOCDMS} (setting $\pi_k^A=1$) we obtain that for all $n \ge \widetilde{M}_A$
\[
\Pb_{k, \Bc,\theta}\brc{\wtT_A^W >  n} \le 
\Pb_{k, \Bc,\theta}\set{\frac{1}{n} \inf_{\vartheta \in \Gamma_{\delta,\theta}} \lambda_{\Bc,\vartheta}(k, k+n) <  I_{\Bc,\theta} - \varepsilon +\frac{1}{n}  \brcs{\log W(\Gamma_{\delta,\theta}) +\log p_\Bc}}.
\]
Hence, for all sufficiently large  $n$ and $\varepsilon_1 >0$,
\begin{align} \label{ProbkTWSR}
\Pb_{k, \Bc,\theta}\brc{\wtT_A^W-k >n}  
&  \le \Pb_{k,\Bc,\theta}\brc{\frac{1}{n}\inf_{\vartheta \in \Gamma_{\delta,\theta}} \lambda_{\Bc,\vartheta}(k, k+n)  < I_{\Bc,\theta}  - \varepsilon_1}. 
\end{align}
Using \eqref{Ektildetauineq} and \eqref{ProbkTWSR}, we obtain
\begin{equation}\label{EkinequppertildeTAW}
\Eb_{k, \Bc,\theta}\brcs{\brc{\wtT_A^W-k}^+}^r \le \brc{1+\left\lfloor \frac{\log A}{I_{\Bc,\theta}-\varepsilon} \right\rfloor}^r + r 2^{r-1} \, \Upsilon_{r}(\varepsilon_1,\Bc, \theta),
\end{equation}
which along with the inequality $\Pb_\infty(\wtT_A^W > k) > 1- (\omega_A +k)/A$ (see \eqref{PFADoobineqDM}) implies the inequality
\begin{align}
  \Rc_{k, \Bc,\theta}^r(\wtT_A^W)& =\frac{\Eb_{k, \Bc,\theta}\brcs{\brc{\wtT_A^W-k}^+}^r}{\Pb_\infty(\wtT_A^W > k)}  \nonumber
  \\
  &\le   \frac{\brc{1+\left\lfloor \frac{\log A}{I_{\Bc,\theta}-\varepsilon} \right\rfloor}^r + r 2^{r-1} \, \Upsilon_{r}(\varepsilon_1,\Bc, \theta)}{1- (\omega_A +k)/A}. 
  \label{RckupperDSR}
 \end{align}
Since due to \eqref{PriorA} $\omega_A/A\to 0$ and, by condition $\C_2$, $\Upsilon_{r}(\varepsilon_1,\Bc, \theta) < \infty$ for all $\varepsilon_1>0$, $\Bc\in\Pc$, $\theta\in\Theta$, inequality
\eqref{RckupperDSR} implies the asymptotic inequality
\[
 \Rc_{k, \Bc,\theta}^r(\wtT_A^W)\le \brc{\frac{\log A}{I_{\Bc,\theta} - \varepsilon}}^r (1+o(1)), \quad A \to \infty.
\]
Since $\varepsilon$ can be arbitrarily small the asymptotic upper bound \eqref{UBkTDSR} follows
and  the proof of the asymptotic approximation  \eqref{MomentskDMSR} is complete.

In order to prove  \eqref{MomentsDMSR} note first that, using \eqref{LBSRA}, yields the lower bound
\begin{align} \label{LBMA}
 \Rca_{\Bc,\theta}^r(\wtT_A) & \ge M_{A}^r\brcs{1-\frac{\bar\nu_A+\omega_A}{A} - \Pb^{\pi}_{\Bc,\theta} \brc{0 < \wtT_A-\nu < M_{A}}} .
\end{align} 
Let $K_A$ be an integer number that approaches infinity as $A\to\infty$ with rate $O(A^\gamma)$, $\gamma>0$. Now, using \eqref{Pktauupper} and \eqref{UpperU}, we obtain 
\begin{align}\label{ProbineqSR}
&\Pb^{\pi}_{\Bc,\theta}(0< \wtT_A^W -\nu < M_{A}) 
= \sum_{k=0}^\infty  \pi_k^A  \Pb_{k,\Bc,\theta}\brc{0 < \wtT_A^W -k < M_{A}} \nonumber
 \\
 &\le \Pb(\nu > K_{A}) +   \sum_{k=0}^\infty \pi_k^A U_{M_A,k}(\wtT_A^W) + \sum_{k=0}^{K_{A}}  \pi_k^A \beta_{M_A, k}(\varepsilon,\Bc,\theta) \nonumber
 \\ 
 & \le  \Pb(\nu > K_{A}) + \frac{\bar\nu_A+\omega_A+ (1-\varepsilon) I_{\Bc,\theta}^{-1} \log A}{A^{\varepsilon^2}} 
  +  \sum_{k=0}^{K_{A}}  \pi_k^A  \beta_{M_A,k}(\varepsilon,\Bc,\theta) .
 \end{align}
Note that due to \eqref{PriorA} $(\omega_A+\bar\nu_A)/A^\gamma \to 0$ as $A\to\infty$ for any $\gamma>0$. As a result, 
the first two  terms in \eqref{ProbineqSR} go to zero as $A\to\infty$ (by Markov's inequality $\Pb(\nu>K_A) \le \bar\nu_A/K_A = \bar\nu_A/O(A^\gamma) \to 0$) 
and the last term also goes to zero  
by condition $\A_1$ and Lebesgue's dominated convergence theorem. Thus, for all $0<\varepsilon <1$,
$\Pb^\pi_\Bc(0< \wtT_A^W -\nu < M_{A})$ approaches $0$ as $A\to\infty$.
Using inequality \eqref{LBMA}, we obtain that for any $0<\varepsilon <1$ as $A\to\infty$
\[
\Rca_{\Bc,\theta}^r(\wtT_A^W) \ge (1-\varepsilon)^r \brc{\frac{\log A}{I_{\Bc,\theta}}}^r (1+o(1)),
\]
which yields the asymptotic lower bound (for any $r>0$, $\Bc\in\Pc$, and $\theta\in\Theta$)
\begin{equation}\label{LBTAWRca}
\liminf_{A\to\infty}\frac{\Rca_{\pi,\Bc,\theta}^r(\wtT_A^W)}{(\log A)^r}  \ge \frac{1}{I_{\Bc,\theta}^r}.
\end{equation}

To obtain the upper bound it suffices to use inequality \eqref{EkinequppertildeTAW}, which along with the fact that $\PFA(\wtT_A^W) \le (\bar\nu_A +\omega_A)/A$ yields 
(for every $0<\varepsilon < I_{\Bc,\theta}$)
\begin{align*}%\label{RcarDMSR}
&\Rca_{\Bc,\theta}^r(\wtT_A^W) = \frac{\sum_{k=0}^\infty \pi_k^A  \Eb_{k,\Bc,\theta}[(\wtT_A^W-k)^+]^r}{1-\PFA(\wtT_A^W )} \nonumber
\\
&\le\frac{\brc{1+\frac{\log A}{I_{\Bc,\theta}-\varepsilon}}^r + r 2^{r-1}\, \Upsilon_{r}(\varepsilon_1,\Bc,\theta)}{1-(\omega_A+\bar\nu_A)/A} .
\end{align*}
Since $(\omega_A+\bar\nu_A)/A\to 0$ and, by condition $\C_2$, $\Upsilon_{r}(\varepsilon_1,\Bc,\theta) < \infty$ for any $\varepsilon >0$, $\Bc\in\Pc$, and $\theta\in\Theta$
we obtain that, for every $0<\varepsilon < I_{\Bc,\theta}$ as $A \to \infty$,
\[
\Rca_{\Bc,\theta}^r(\wtT_A^W) \le \brc{\frac{\log A}{I_{\Bc,\theta}-\varepsilon}}^r (1+o(1)) ,
\]
which implies 
\begin{equation}\label{UBTDMSR}
 \limsup_{A\to\infty} \frac{\Rca_{\Bc,\theta}^r(\wtT_A^W)}{(\log A)^r}  \le \frac{1}{I_{\Bc,\theta}^r} 
\end{equation}
since $\varepsilon$ can be arbitrarily small.

Applying the bounds  \eqref{LBTAWRca} and \eqref{UBTDMSR} together completes the proof of  \eqref{MomentsDMSR}.
\end{proof}

%%%%%%%%%%%%%%%%%%%%%%%%%%%%%%%%%%%%%%%%%%%%%%
%\bibliographystyle{imsart-number}
% argument is your BibTeX string definitions and bibliography database(s)
%\bibliography{IEEEabrv,../bib/paper}
%
% <OR> manually copy in the resultant .bbl file
% set second argument of \begin to the number of references
% (used to reserve space for the reference number labels box)
%\bibliography{Bib_main}

\begin{thebibliography}{19}
% BibTex style file: imsart-number.bst, 2017-11-03
% Default style options (sort=1,type=number).
% Used options (sort=1,type=number).

\bibitem{Bakutetal-book63}
\begin{bbook}[author]
\bauthor{\bsnm{Bakut},~\bfnm{P.~A.}\binits{P.~A.}},
  \bauthor{\bsnm{Bolshakov},~\bfnm{I.~A.}\binits{I.~A.}},
  \bauthor{\bsnm{Gerasimov},~\bfnm{B.~M.}\binits{B.~M.}},
  \bauthor{\bsnm{Kuriksha},~\bfnm{A.~A.}\binits{A.~A.}},
  \bauthor{\bsnm{Repin},~\bfnm{V.~G.}\binits{V.~G.}},
  \bauthor{\bsnm{Tartakovsky},~\bfnm{G.~P.}\binits{G.~P.}} \AND
  \bauthor{\bsnm{Shirokov},~\bfnm{V.~V.}\binits{V.~V.}}
(\byear{1963}).
\btitle{Statistical Radar Theory}
\bvolume{1 (G. P. Tartakovsky, Editor)}.
\bpublisher{Sovetskoe Radio}, \baddress{Moscow, USSR}.
\bnote{In Russian}.
\end{bbook}
\endbibitem

\bibitem{Chan-AS2017}
\begin{barticle}[author]
\bauthor{\bsnm{Chan},~\bfnm{Hock~Peng}\binits{H.~P.}}
(\byear{2017}).
\btitle{Optimal sequential detection in multi-stream data}.
\bjournal{Annals of Statistics}
\bvolume{45}
\bpages{2736--2763}.
\end{barticle}
\endbibitem

\bibitem{felsokIEEEIT2016}
\begin{barticle}[author]
\bauthor{\bsnm{Fellouris},~\bfnm{Georgios}\binits{G.}} \AND
  \bauthor{\bsnm{Sokolov},~\bfnm{Gregory}\binits{G.}}
(\byear{2016}).
\btitle{Second-order asymptotic optimality in multichannel sequential
  detection}.
\bjournal{IEEE Transactions on Information Theory}
\bvolume{62}
\bpages{3662--3675}.
\bdoi{10.1109/TIT.2016.2549042}
\end{barticle}
\endbibitem

\bibitem{FellourisTartakovsky-IEEEIT2017}
\begin{barticle}[author]
\bauthor{\bsnm{Fellouris},~\bfnm{Georgios}\binits{G.}} \AND
  \bauthor{\bsnm{Tartakovsky},~\bfnm{Alexander~G.}\binits{A.~G.}}
(\byear{2017}).
\btitle{Multichannel sequential detection---{P}art {I}: Non-i.i.d. data}.
\bjournal{IEEE Transactions on Information Theory}
\bvolume{63}
\bpages{4551--4571}.
\bdoi{10.1109/TIT.2017.2689785}
\end{barticle}
\endbibitem

\bibitem{LaiJRSS95}
\begin{barticle}[author]
\bauthor{\bsnm{Lai},~\bfnm{Tze~Leung}\binits{T.~L.}}
(\byear{1995}).
\btitle{Sequential changepoint detection in quality control and dynamical
  systems (with discussion)}.
\bjournal{Journal of the Royal Statistical Society - Series B Methodology}
\bvolume{57}
\bpages{613--658}.
\end{barticle}
\endbibitem

\bibitem{LaiIEEE98}
\begin{barticle}[author]
\bauthor{\bsnm{Lai},~\bfnm{Tze~Leung}\binits{T.~L.}}
(\byear{1998}).
\btitle{Information bounds and quick detection of parameter changes in
  stochastic systems}.
\bjournal{IEEE Transactions on Information Theory}
\bvolume{44}
\bpages{2917--2929}.
\end{barticle}
\endbibitem

\bibitem{Mei-B2010}
\begin{barticle}[author]
\bauthor{\bsnm{Mei},~\bfnm{Yajun}\binits{Y.}}
(\byear{2010}).
\btitle{Efficient scalable schemes for monitoring a large number of data
  streams}.
\bjournal{Biometrika}
\bvolume{97}
\bpages{419--433}.
\end{barticle}
\endbibitem

\bibitem{PollakTartakovsky-SS09}
\begin{barticle}[author]
\bauthor{\bsnm{Pollak},~\bfnm{M.}\binits{M.}} \AND
  \bauthor{\bsnm{Tartakovsky},~\bfnm{A.~G.}\binits{A.~G.}}
(\byear{2009}).
\btitle{Optimality properties of the {Shiryaev--Roberts} procedure}.
\bjournal{Statistica Sinica}
\bvolume{19}
\bpages{1729--1739}.
\end{barticle}
\endbibitem

\bibitem{PolunTartakovskyAS09}
\begin{barticle}[author]
\bauthor{\bsnm{Polunchenko},~\bfnm{A.~S.}\binits{A.~S.}} \AND
  \bauthor{\bsnm{Tartakovsky},~\bfnm{A.~G.}\binits{A.~G.}}
(\byear{2010}).
\btitle{On optimality of the {Shiryaev--Roberts} procedure for detecting a
  change in distribution}.
\bjournal{Annals of Statistics}
\bvolume{38}
\bpages{3445--3457}.
\end{barticle}
\endbibitem

\bibitem{TartakovskyIEEECDC05}
\begin{binproceedings}[author]
\bauthor{\bsnm{Tartakovsky},~\bfnm{A.~G.}\binits{A.~G.}}
(\byear{2005}).
\btitle{Asymptotic performance of a multichart {CUSUM} test under false alarm
  probability constraint}.
In \bbooktitle{Proceedings of the 44th IEEE Conference Decision and Control and
  European Control Conference (CDC-ECC'05), Seville, SP}
\bpages{320--325}.
\borganization{IEEE}.
\bpublisher{Omnipress CD-ROM}.
\end{binproceedings}
\endbibitem

\bibitem{TartakovskyIEEEIT2017}
\begin{barticle}[author]
\bauthor{\bsnm{Tartakovsky},~\bfnm{A.~G.}\binits{A.~G.}}
(\byear{2017}).
\btitle{On asymptotic optimality in sequential changepoint detection: Non-iid
  case}.
\bjournal{IEEE Transactions on Information Theory}
\bvolume{63}
\bpages{3433--3450}.
\bdoi{10.1109/TIT.2017.2683496}
\end{barticle}
\endbibitem

\bibitem{TartakovskyIEEEIT2018}
\begin{barticle}[author]
\bauthor{\bsnm{Tartakovsky},~\bfnm{Alexander~G.}\binits{A.~G.}}
(\byear{2018, under review}).
\btitle{Asymptotic optimality of mixture rules for detecting changes in general
  stochastic models}.
\bjournal{IEEE Transactions on Information Theory}.
\end{barticle}
\endbibitem

\bibitem{Tartakovsky&Brown-IEEEAES08}
\begin{barticle}[author]
\bauthor{\bsnm{Tartakovsky},~\bfnm{A.~G.}\binits{A.~G.}} \AND
  \bauthor{\bsnm{Brown},~\bfnm{J.}\binits{J.}}
(\byear{2008}).
\btitle{Adaptive spatial-temporal filtering methods for clutter removal and
  target tracking}.
\bjournal{IEEE Transactions on Aerospace and Electronic Systems}
\bvolume{44}
\bpages{1522--1537}.
\end{barticle}
\endbibitem

\bibitem{TNB_book2014}
\begin{bbook}[author]
\bauthor{\bsnm{Tartakovsky},~\bfnm{A.~G.}\binits{A.~G.}},
  \bauthor{\bsnm{Nikiforov},~\bfnm{I.~V.}\binits{I.~V.}} \AND
  \bauthor{\bsnm{Basseville},~\bfnm{M.}\binits{M.}}
(\byear{2014}).
\btitle{Sequential Analysis: Hypothesis Testing and Changepoint Detection}.
\bseries{Monographs on Statistics and Applied Probability}.
\bpublisher{Chapman \& Hall/CRC Press}, \baddress{Boca Raton, London, New
  York}.
\end{bbook}
\endbibitem

\bibitem{tartakovsky-tpa11}
\begin{barticle}[author]
\bauthor{\bsnm{Tartakovsky},~\bfnm{Alexander~G.}\binits{A.~G.}},
  \bauthor{\bsnm{Pollak},~\bfnm{Moshe}\binits{M.}} \AND
  \bauthor{\bsnm{Polunchenko},~\bfnm{Aleksey~S.}\binits{A.~S.}}
(\byear{2012}).
\btitle{Third-order asymptotic optimality of the generalized
  {Shiryaev--Roberts} changepoint detection procedures}.
\bjournal{Theory of Probability and its Applications}
\bvolume{56}
\bpages{457-484}.
\bdoi{10.1137/S0040585X97985534}
\end{barticle}
\endbibitem

\bibitem{Tartakovskyetal-SM06}
\begin{barticle}[author]
\bauthor{\bsnm{Tartakovsky},~\bfnm{Alexander~G.}\binits{A.~G.}},
  \bauthor{\bsnm{Rozovskii},~\bfnm{Boris~L.}\binits{B.~L.}},
  \bauthor{\bsnm{Bla\'{z}ek},~\bfnm{Rudolf~B.}\binits{R.~B.}} \AND
  \bauthor{\bsnm{Kim},~\bfnm{Hongjoong}\binits{H.}}
(\byear{2006}).
\btitle{Detection of intrusions in information systems by sequential
  change-point methods}.
\bjournal{Statistical Methodology}
\bvolume{3}
\bpages{252--293}.
\end{barticle}
\endbibitem

\bibitem{TartakovskyVeerTVP05}
\begin{barticle}[author]
\bauthor{\bsnm{Tartakovsky},~\bfnm{Alexander~G.}\binits{A.~G.}} \AND
  \bauthor{\bsnm{Veeravalli},~\bfnm{Venugopal~V.}\binits{V.~V.}}
(\byear{2005}).
\btitle{General asymptotic {Bayesian} theory of quickest change detection}.
\bjournal{Theory of Probability and its Applications}
\bvolume{49}
\bpages{458--497}.
\end{barticle}
\endbibitem

\bibitem{willsky-ac76}
\begin{barticle}[author]
\bauthor{\bsnm{Willsky},~\bfnm{Alan~S.}\binits{A.~S.}} \AND
  \bauthor{\bsnm{Jones},~\bfnm{Harold~L.}\binits{H.~L.}}
(\byear{1976}).
\btitle{A generalized likelihood ratio approach to the detection and estimation
  of jumps in linear systems}.
\bjournal{IEEE Transactions on Automatic Control}
\bvolume{21}
\bpages{108--112}.
\end{barticle}
\endbibitem

\bibitem{Xie&Siegmund-AS13}
\begin{barticle}[author]
\bauthor{\bsnm{Xie},~\bfnm{Yao}\binits{Y.}} \AND
  \bauthor{\bsnm{Siegmund},~\bfnm{David}\binits{D.}}
(\byear{2013}).
\btitle{Sequential multi-sensor change-point detection}.
\bjournal{Annals of Statistics}
\bvolume{41}
\bpages{670--692}.
\end{barticle}
\endbibitem

\end{thebibliography}
%{kniga_plus_IEEEIT}

\end{document}